\documentclass[epic,eepic,11pt]{amsart}

\usepackage{amsfonts,mathrsfs}
\usepackage[mathscr]{eucal}
\usepackage{mathtools}
\usepackage{amssymb}
\usepackage{amscd}
\usepackage{fancyhdr}
\usepackage[all,cmtip]{xy}



\usepackage{euler,eucal}

\pagestyle{fancy} \fancyhf{} \rhead{\thepage}

\DeclareMathAlphabet{\mathbf}{T1}{ppl}{bx}{n}
\DeclareMathAlphabet{\mathrm}{T1}{ppl}{m}{n}



\numberwithin{equation}{section}

\newcommand\note[1]%
{$^\dagger$\marginpar{\footnotesize{$^\dagger${#1}}}}

\def\({\left(}
\def\){\right)}
\def\<{\left<}
\def\>{\right>}


\newtheorem{theorem}{Theorem}[section]
\newtheorem{proposition}[theorem]{Proposition}
\newtheorem{lemma}[theorem]{Lemma}
\newtheorem{definition}[theorem]{Definition}

\newtheorem{corollary}[theorem]{Corollary}

\theoremstyle{definition}
\newtheorem{example}[theorem]{Example}
\newtheorem{remark}[theorem]{Remark}



\newcommand\bb[1]{{\text{\bf#1}}}

\newcommand\Z{\bb{Z}}

\newcommand\R{\mathbb{R}}


%
\newcommand     {\comment}[1]   {}
\newcommand{\mute}[2] {}
\newcommand     {\printname}[1] {}



\newcommand\funclim[1]{\operatorname*{\mathrm{#1}}}

\renewcommand\lim{\funclim{lim}}






\newcommand\sur{\mathrel{\to\kern-1.8ex\to}}
\newcommand\iso{\mathrel{\hookrightarrow\kern-1.8ex\to}}

\newcommand\longhookrightarrow{\lhook\joinrel\longrightarrow}

\newcommand\longsur{\mathrel{\longrightarrow\kern-1.8ex\to}}
\newcommand\longiso{\mathrel{\longhookrightarrow\kern-1.8ex\to}}





\begin{document}

\bibliographystyle{amsalpha}
\date{\today}

\title{Hodge theory on transversely symplectic foliations}

\author{Yi Lin }

 \date{\today}

\maketitle
\begin{abstract}
In this paper, we develop symplectic Hodge theory on transversely symplectic foliations. In particular, we establish the symplectic $d\delta$-lemma for any such foliations with the (transverse) $s$-Lefschetz  property.  As transversely symplectic foliations include many geometric structures, such as  contact manifolds, co-symplectic manifolds, symplectic orbifolds, and symplectic quasi-folds as special examples, our work provides a unifying treatment of symplectic Hodge theory in these geometries. 

As an application, we show that on compact $K$-contact manifolds, the $s$-Lefschetz property implies a general result on the vanishing of cup products, and that the cup length of a $2n+1$ dimensional compact $K$-contact manifold with the (transverse) $s$-Lefschetz property is at most $2n-s$. For any even integer $s\geq 2$, we also apply our main result to produce examples of $K$-contact manifolds that are $s$-Lefschetz but not $(s+1)$-Lefschetz.  


\end{abstract}



\setcounter{section}{0} \setcounter{subsection}{0}

\section{Introduction}

Hodge theory on symplectic manifolds was introduced by Ehresmann and Libermann \cite{EL49}, \cite{L55}, and was rediscovered by Brylinski \cite{brylinski;differential-poisson}.  Brylinski proved that on a compact K\"ahler manifold, any de Rham cohomology class admits a symplectic harmonic representative, and further conjectured that on a compact symplectic manifold, any de Rham cohomology class has a symplectic harmonic representative.

However, Mathieu proved that for a $2n$ dimensional symplectic manifold $(M,\omega)$, the Brylinski conjecture is true if and only if $(M,\omega)$ satisfies the Hard Lefschetz property, i.e., for any $0\leq k\leq n$, the Lefschetz map

\begin{equation}\label{Lefschetz-map}L^k: H^{n-k}(M)\rightarrow H^{n+k}(M),\,\,\,\,[\alpha]\mapsto [\omega^k\wedge \alpha]\end{equation}
is surjective. Mathieu's result was sharpened by Merkulov \cite{Mer98} and Guillemin\cite{Gui01}, who independently established the symplectic $d\delta$-lemma. 

 On symplectic manifolds, Fern\'{a}ndez, Mu\~{n}oz and Ugarte \cite{FMU04} introduced a notion of weakly Lefschetz property. More precisely, for any $0\leq s\leq n-1$, a $2n$ dimensional symplectic manifold $(M,\omega)$ is said to satisfy the $s$-Lefschetz property, if and only if for any $0\leq k\leq s$, the Lefshetz map (\ref{Lefschetz-map}) is surjective.  
 Fern\'{a}ndez, Mu\~{n}oz and Ugarte extended the symplectic $d\delta$-lemma to symplectic manifolds with the $s$-Lefschetz property;  moreover, 
 for any even integer $s\geq 2$, they also produced examples of symplectic manifolds which are $s$-Lefschetz but not $(s+1)$-Lefschetz, c.f. \cite{FMU04}, \cite{FMU07}.
 
 The foliated version of symplectic Hodge theory has also been studied in the literature. Pak \cite{Pak08} extended Mathieu's Lefschetz theorem to the basic cohomology groups of transversely symplectic flows.  In the context of odd dimensional symplectic manifolds, He \cite{He10}  further developed Hodge theory on transversely symplectic flows, and established the symplectic $d\delta$-lemma in this framework. 
 
Replacing the Riemannian hodge theory used in \cite{CNY13} by symplectic Hodge theory developed by He \cite{He10}, Lin \cite{L13} proved that for a compact $K$-contact manifold, the transverse Hard Lefschetz property on basic cohomology groups is equivalent to the Hard Lefschetz property on de Rham cohomology groups introduced in \cite{CNY13}. In particular, this implies immediately  that on a compact Sasakian manifold, the two existing versions of Hard Lefschetz theorems,  established in \cite{ka90} and \cite{CNY13} respectively,  are mathematically equivalent to each other.  Boyer and Galicki \cite{BG08} raised the open question of whether there exist simply-connected $K$-contact manifolds which do not admit any Sasakian structures. Lin's Hodge theoretic methods shed new insights on the Lefschetz property of a $K$-contact manifold, and allow him to produce such examples in any dimension $\geq 9$.

However,  many singular symplectic spaces naturally arise as the leaf space of higher dimensional transversely symplectic foliations.  For example,  effective symplectic orbifolds in the sense of Satake \cite{S57}, c.f. Example \ref{orbifolds}, and symplectic quasi-folds introduced by Prato\cite{P01}, c.f. Example \ref{quasi-folds}.  Thus it would be natural to consider  Hodge theory on transversely symplectic foliations of arbitrary dimension as well. In the present paper, for any transversely symplectic foliation,  we introduce the notion of transverse $s$-Lefschetz property on its basic cohomology groups ( see Definition \ref{weak-lefschetz-property}), and generalize the machinery of symplectic Hodge theory to this framework. Among other things, we prove the symplectic $d\delta$-lemma in this setup.

We then apply our results to the study of $K$-contact manifolds, and prove that the transverse $s$-Lefschetz property imposed on the basic cohomology of a $K$-contact manifold is equivalent to the $s$-Lefschetz property imposed on its de Rham cohomology (see Theorem \ref{main-result1}).   As a first application, we show that on compact $K$-contact manifolds, the $s$-Lefschetz property implies a fairly general result on the vanishing of cup products, which implies that the cup length of a $2n+1$ dimensional compact $K$-contact manifold with the $s$-Lefschetz property is at most $2n-s$. It is well known that the cup length of a $2n+1$ dimensional compact $K$-contact manifold is at most $2n$, c.f. \cite[Theorem 7.4.1]{BG08}.  More recently, using the methods of rational homotopy theory, it is shown in \cite{MT15} that a simply-connected $7$ dimensional compact Sasakian manifold has vanishing cup product $H^2\times H^2\rightarrow H^4$.  Our result does not assume the $K$-contact manifold under consideration to be simply-connected, and simultaneously generalizes all these known results in the literature.  

As a second application, for any even integer $s\geq 2$, we produce examples of $2s+5$ dimensional compact $K$-contact manifolds that are $s$-Lefschetz but not $(s+1)$-Lefschetz. In particular, our  construction provides new examples of simply-connected compact $K$-contact manifolds that do not admit any Sasakian structures in dimension $\geq 9$.

Our paper is organized as follows.  Section \ref{transverse-sym} reviews  preliminaries on transversely symplectic foliations.
Section \ref{transverse-sym-Hodge} explains how to do Hodge theory on transversely symplectic foliations. Section \ref{ddelta-lemma} proves the symplectic $d\delta$-lemma for a transversely symplectic foliation. Section \ref{review-contact}  recalls necessary background materials on $K$-contact and Sasakian geometries. Section \ref{Kcontact-s-lefschetz} proves that the transverse $s$-Lefschetz condition imposed on the basic cohomology groups of a compact $K$-contact manifold is equivalent to the $s$-Lefschetz condition imposed on its de Rham cohomology groups. Section \ref{cup-length} shows that the cup length of a compact $2n+1$ dimensional $K$-contact manifold with the $s$-Lefschetz property is at most $2n-s$. Section \ref{main-examples} constructs, for any even integer $s\geq 2$, a compact $K$-contact manifold that is $s$-Lefschetz but not $(s+1)$-Lefschetz.
 
 \subsection*{Acknowledgement} This work was completed when the author was visiting the Department of Mathematics at Cornell University in the fall of 2015, and the School of Mathematics at Sichuan University in the spring of 2016. He would like to thank these two institutions for providing him an excellent working environment during his visit.

\section{ Review of transversely symplectic foliations }\label{transverse-sym}



Let $\mathcal{F}$ be a foliation of co-dimension $q$ on a smooth manifold $M$ of dimension $n$, and let $P$ be the integrable subbundle of $TM$ associated to $\mathcal{F}$.  A \emph{foliation chart} of $(M,\mathcal{F})$ is a 
coordinate chart $(\varphi: U\rightarrow \mathbf{R}^{n-q}\times \mathbf{R}^{q})$ on $M$, such that for any $z\in U$, a vector $X_z\in T_zM$ is tangent to  the fiber $P_z$ if and only if $\varphi_{*z}(X_z)$ is tangent to the vertical plaque $\mathbf{R}^{n-q}\times \{ \pi\circ \varphi(z)\}$, where $\pi: \mathbf{R}^{n-q}\times\mathbf{R}^q\rightarrow \mathbf{R}^q$ is the projection map.  Let $\varphi: U\rightarrow \mathbf{R}^{n-q}\times \mathbf{R}^{q}$ be a foliation chart, and let $y_i$ be the $i$-th coordinate of the function $\pi\circ \varphi$. Then we call $\{y_1,\cdots, y_q\}$ \emph{transverse  coordinates} on $U$.

On the foliated manifold $(M,\mathcal{F})$,  the spaces of \emph{horizontal forms} $\Omega_{hor}(M)$ and \emph{basic forms}  $\Omega_{bas}(M)$ are defined as follows
respectively.
\begin{equation}\begin{split} &\Omega_{\textmd{hor}}(M)=\{\alpha\in\Omega(M)\mid\iota_{X}\alpha=0,\,\forall\, X\in \Gamma(P)\}, \\
&
\Omega_{\textmd{bas}}(M)=\{\alpha\in\Omega(M)\mid\iota_{X}\alpha=0,\quad\mathcal{L}_{X}\alpha=0,\forall X\in \Gamma(P)\}.
\end{split}\end{equation}
Here $\Gamma(P)$ denotes the space of smooth sections of the integrable subbundle $P$ associated to the foliation. Since the exterior differential operator $d$ preserves basic forms,  we obtain a subcomplex of the de Rham complex $\{\Omega^{*}(M),d\}$,
which is called the \emph{basic de Rham complex}
$$
\xymatrix@C=0.5cm{
  \cdot\cdot\cdot \ar[r] & \Omega^{k-1}_{\textmd{bas}}(M) \ar[r]^{d} & \Omega^{k}_{\textmd{bas}}(M) \ar[r]^{d} & \Omega^{k+1}_{\textmd{bas}}(M) \ar[r]^{\,\,\,\,\,d} & \cdot\cdot\cdot }
$$
The cohomology of the basic de Rham complex $\{\Omega^{*}_{\textmd{bas}}(M),d\}$, denoted by $H^{*}_{B}(M,\R)$, is called the \emph{basic de Rham cohomology} of $M$.
If $M$ is connected\footnote{The manifolds we consider in this paper are all connected.}, then $H^{0}_{B}(M,\R)\cong\mathbb{R}^{1}$. Moreover, the inclusion $\Omega_{bas}^1(M)\hookrightarrow \Omega^1(M)$ induces an injective map 
$H^1_B(M)\rightarrow H^1(M)$ (\cite[Prop. 4.1]{T97}). In general, the group $H^{k}_{B}(M,\R)$ may be infinite-dimensional for $k\geq2$.

\begin{definition}[Transversely symplectic foliation]
Let $M$ be a manifold with a foliation $\mathcal{F}$. A closed two form $\omega$ is said to be transversely symplectic with respect to $\mathcal{F}$, if for any $p\in M$, the kernel of $\omega_p$ coincides with the tangent space of the leave passing through $p$.  A \emph{transversely symplectic foliation} is a triple $(M,\mathcal{F},\omega)$, such that $\omega$ is transversely symplectic with respect to the foliation $\mathcal{F}$.
\end{definition}

\begin{example}\label{Contact-example}(\textbf{Contact manifolds})

On any co-oriented contact manifold $(M,\eta)$ with a contact one form $\eta$, there is a nowhere vanishing Reeb vector $\xi$ uniquely determined by the  equations $\iota_{\xi}\eta=1,\,\,\,\,\,\iota_{\xi}d\eta=0$.

The one dimensional foliation $\mathcal{F}_{\xi}$ associated to $\xi$ is called the Reeb characteristic foliation. It is easy to see that $\mathcal{F}_{\xi}$  is transversely symplectic with $\omega:=d\eta$ being the transversely symplectic form.

\end{example}

\begin{example}\label{Co-sympl-example}(\textbf{Co-symplectic manifolds})

A  $2n+1$ dimensional manifold $M$ is co-symplectic, if it is equipped with a closed two form $\omega$, and a closed one form $\eta$, such that $\omega^n\wedge \eta\neq 0$. The Reeb vector field $\xi$ is uniquely determined by $\iota_{\xi}\eta=1$, $\iota_{\xi}d\eta=0$.  Let $\mathcal{F}_{\xi}$ be the associated Reeb characteristic foliation. Then
$(M,\mathcal{F},\omega)$ is transversely symplectic.

\end{example}

\begin{example} (\textbf{Odd dimensional symplectic manifolds}, \cite{He10})

Let $M$ be a $2n+1$ dimensional manifold with a volume form $\Omega$ and a closed 2-form $\omega$,
such that $\omega^{n}\neq0$ everywhere.
Then the triple $(M,\omega,\Omega)$ is called an $(2n+1)$ dimensional symplectic manifold.
In particular, there exists a unique canonical vector field $\xi$ defined by
$$
\iota_{\xi}\omega=0,\quad\quad\quad\iota_{\xi}\Omega=\frac{\omega^{n}}{n!}
$$
which is called the Reeb vector field on $(M,\omega,\Omega)$.
The Reeb vector field yields a canonical 1-dimensional foliation $\mathcal{F}_{\xi}$
. It is straightforward to check that $(M,\omega, \mathcal{F}_{\xi})$ is
transversely symplectic.
\end{example}
\begin{example}(\textbf{Symplectic orbifolds})\label{orbifolds}
Let $(X,\sigma)$ be a $2n$ dimensional effective symplectic orbifold in the sense of Satake \cite{S57}.
Then the total space of the orthogonal frame orbi-bundle $\pi:P\longrightarrow X$ is a smooth manifold, on which the structure group $O(2n)$ acts locally free. Let $\mathcal{F}$ be the foliation induced by the $O(2n)$ action, whose leaves are precisely given by the orbits of the $O(2n)$ action. Set $\omega:=\pi^{*}\sigma$. Then it is easy to see that $(P,\mathcal{F},\omega)$ 
is a transversely symplectic foliation. In this case, the leaf space of $\mathcal{F}$ can be naturally identified with the orbifold $X$.
\end{example}
\begin{example}(\textbf{Symplectic quasi-folds} \cite{P01})\label{quasi-folds}
Suppose that a torus $T$ acts on a symplectic manifold $(X,\sigma)$ in a Hamiltonian fashion with a moment map
$
\phi:X\longrightarrow\mathfrak{t}^{*}. $ Let $N\subset T$ be a non-closed subgroup with Lie algebra $\mathfrak{n}$. Then the action of $N$ on $X$ has a moment map   
$
\varphi:X\longrightarrow\mathfrak{n}^{*}
$, which is given by the composition of $\phi: X\rightarrow \mathfrak{t}^*$ and the natural projection map $\mathfrak{t}^*\rightarrow \mathfrak{n}^*$. 

Let $a$ be a regular value of $\varphi$. Consider the the submanifold $M=\varphi^{-1}(a)\subset X$.
The $N$-action on $M$ yields a transversely symplectic foliation $\mathcal{F}$
with the transversely symplectic form $\omega:=i^{*}\sigma$,
where $i: M\hookrightarrow X$ is the inclusion map.
When $N$ is a connected subgroup, the leaf space of $\mathcal{F}$ is exactly the so called symplectic quasi-fold introduced by E. Prato \cite{P01}.
\end{example}

\begin{theorem} \label{Darboux-thm}(Darboux theorem) Let $(\mathcal{F},\omega)$ be a transversely symplectic foliation of co-dimension  $2n$ on a $2n+l$ dimensional manifold $M$. Then for any $p\in M$, there exists a foliation chart $(U, \psi)$ around $p$ equipped with transverse coordinates $\{ x_1, y_1,\cdots x_n,y_n\}$, such that
\[ (\psi^{-1})^*\omega\vert_{\psi(U)}= \displaystyle \sum_{i=1}^n dx_i\wedge dy_i.\]
For a transversely symplectic foliation, such transverse coordinates will be called \textbf{transverse Darboux coordinates}.
\end{theorem}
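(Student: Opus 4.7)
The strategy is to reduce the statement to the classical Darboux theorem on $\mathbf{R}^{2n}$ by descending $\omega$ to the local leaf space inside an arbitrary foliation chart. The first step is to observe that $\omega$ is a basic form. Indeed, the hypothesis that $\ker\omega_p$ equals the leaf tangent space at every $p\in M$ means that $\iota_X\omega=0$ for every $X\in\Gamma(P)$, and then Cartan's formula together with $d\omega=0$ gives $\mathcal{L}_X\omega=d\iota_X\omega+\iota_Xd\omega=0$. Hence $\omega\in\Omega^{2}_{\mathrm{bas}}(M)$.

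Next, pick any foliation chart $\varphi:U\to\mathbf{R}^{l}\times\mathbf{R}^{2n}$ around $p$, with leaf coordinates $z_1,\ldots,z_l$ and transverse coordinates $y_1,\ldots,y_{2n}$. Because $\omega$ is basic, in these coordinates its components neither involve the $dz_i$'s nor depend on the $z_i$'s, so one can write $\omega|_U=(\pi\circ\varphi)^{*}\bar\omega$ for a uniquely determined 2-form $\bar\omega$ defined on the open set $V:=(\pi\circ\varphi)(U)\subset\mathbf{R}^{2n}$. The form $\bar\omega$ is closed since $\omega$ is. Non-degeneracy follows from the kernel hypothesis: at the point $q:=\pi\circ\varphi(p)$, the differential of $\pi\circ\varphi$ identifies $T_qV$ with the normal bundle $T_pM/P_p$, on which $\omega_p$ induces a non-degenerate pairing. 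Shrinking $V$ if necessary, $\bar\omega$ is therefore a genuine symplectic form on $V$.

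Now apply the classical Darboux theorem to $\bar\omega$ on $V$: there exists a smaller neighborhood $V'\subset V$ of $q$ and a diffeomorphism $F:V'\to F(V')\subset\mathbf{R}^{2n}$ with coordinates $(x_1,y_1,\ldots,x_n,y_n)$ in which $\bar\omega=\sum_{i=1}^n dx_i\wedge dy_i$. Setting $U':=(\pi\circ\varphi)^{-1}(V')\cap U$ and
\[
\psi(z):=\bigl(\varphi_1(z),\ldots,\varphi_l(z),\,F(\pi\circ\varphi(z))\bigr),\qquad z\in U',
\]
produces a coordinate chart on $U'$ whose last $2n$ components are constant on plaques (since $F\circ\pi\circ\varphi$ is), so $\psi$ is again a foliation chart with transverse coordinates $x_1,y_1,\ldots,x_n,y_n$, and by construction $(\psi^{-1})^{*}\omega=\sum_{i=1}^n dx_i\wedge dy_i$.

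The only substantive point in the argument is verifying that $\omega$ descends to a symplectic form on the local leaf space, which requires the basicness observation together with the kernel condition; once this is in place, everything else is a direct appeal to the usual Darboux theorem, so no real obstacle remains.
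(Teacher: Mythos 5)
Your proof is correct and follows essentially the same route as the paper's: descend $\omega$ to a symplectic form on the local transverse slice of a foliation chart, invoke the classical Darboux theorem there, and modify the chart by composing its transverse components with the resulting symplectomorphism. If anything, you supply details the paper elides — the Cartan-formula verification that $\omega$ is basic and the explicit non-degeneracy argument via the kernel condition — so no gaps remain.
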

\begin{proof}  $\forall\, p\in M$,  choose a foliation coordinate chart $(U,\varphi)$ around $p$. Without loss of generality, we may assume that $\varphi(U)=V_1\times V_2 \subset \mathbf{R}^l\times \mathbf{R}^{2n}$ for two open subsets $V_1$ and $V_2$ in $\mathbf{R}^l$ and $\mathbf{R}^{2n}$ respectively.  Let $\pi: \varphi(U)\rightarrow V_2$ be the projection map, and let $\{z_1, \cdots, z_l, w_1, \cdots, w_{2n}\}$ be the foliation coordinates on $\varphi(U)$.

Note that by definition, $\omega$ is a basic form. It follows easily that $(\varphi^{-1})^*\omega$ induces a symplectic form $\sigma$ on $V_2$.  Replacing $V_2$ by a smaller open subset if necessary, we may assume that there are Dauboux coordinates $\{x_1,y_1,\cdots, x_n,y_n\}$ on $V_2$, such that 
\[ \sigma =\displaystyle \sum_{i=1}^n dx_i\wedge dy_i, \,\,\,\text{ on }\,\, V_2.\]

Now let $F: V_1\times V_2\rightarrow V_1\times V_2$ be the diffeomorphism given by the change of coordinates map
\[ (z_1, \cdots, z_l, w_1, w_2,\cdots,w_{2n-1}, w_{2n})\mapsto (z_1, \cdots, z_l, x_1, y_1, \cdots, x_n, y_n),\] 
and let $\psi= F\circ \varphi$. Then it is easy to see that the foliation chart $(U,\psi)$ has the desired property stated in
Theorem \ref{Darboux-thm}.

\end{proof}

\section{Transverse symplectic Hodge theory}\label{transverse-sym-Hodge}

In this section we develop the machinery of symplectic Hodge theory on a transversely symplectic foliation. We refer to \cite{brylinski;differential-poisson} and \cite{Yan96}
for general background on symplectic Hodge theory. We need to explain how to define the symplectic Hodge star operator on the space of basic forms. To this end, we first review the construction of symplectic Hodge star operator on a symplectic vector space.

Let $(V,\sigma)$ be a symplectic vector space, where $\sigma$ is a non-degenerate bi-linear pairing on $V$. Since $\sigma$ is non-degenerate, it induces a linear isomorphism 
\[  V\rightarrow  V^*, \,\,\, X\mapsto \iota_X\sigma.\]

Its inverse  map extends linearly to a linear isomorphism $ \sharp: \wedge^k V^*\rightarrow \wedge^k V$. Thus we get a bi-linear pairing 
\[B(\cdot, \cdot): \wedge^k V^*\times \wedge^kV^* \rightarrow \mathbf{R},\,\,\,\,B(\alpha, \beta)=<\sharp (\alpha),\beta>,\]
where $\alpha$ and $\beta$ are $k$-forms on $V$, and $<\cdot, \cdot>$ is the natural dual pairing between $k$-forms and $k$-vectors.  It is straightforward to check that this pairing is non-degenerate. In this context, the symplectic Hodge star of a $k$-form $\alpha$ on $V$ is uniquely determined by the following equation.

\[ \beta\wedge \star \alpha  =B(\beta, \alpha)\dfrac{\sigma^n}{n!}, \,\,\,\, \forall\,\beta \in \wedge^k V^*.\]

We recall that the following identify holds for symplectic Hodge star operator.

\begin{equation}\label{star-square}  \star ^2 =\text{id}.
\end{equation} 
Now let $(\mathcal{F},\omega)$ be a transversely symplectic foliation of co-dimension $2n$ on a manifold $M$, let $P$ be the integrable sub-bundle of $TM$ associated to the foliation $\mathcal{F}$, and Let $Q=TM/ P$.  Then the projection map $\pi: TM\rightarrow Q$ induces a pullback map $\pi^*: \wedge^r Q\rightarrow \wedge^r T^*M$. Clearly, if $s$ is a section of $\wedge^rQ$, then $\pi^* (s)$ is a horizontal $r$-form on $M$. Conversely, if $\alpha$ is a horizontal $r$-form on $M$, then there exists a unique section $s$ of $\wedge^rQ$, such that $\alpha=\pi^* s$.

In particular, since $\omega$ is horizontal, there is a section $\sigma \in \wedge^2Q$ such that $\pi^*\sigma=\omega$. It is easy to see that at any point $x\in M$, $(Q_x,\sigma_x)$ is a symplectic vector space. Thus there is a (point-wise defined) symplectic Hodge star operator on the space of sections of $\wedge^*Q$. By identifying horizontal forms on $M$ with sections of $\wedge^*Q$, we get a symplectic Hodge star operator 
\[\star: \Omega^k_{hor}(M)\rightarrow \Omega^{2n-k}_{hor}(M).\]

\begin{lemma} \label{basic-star} If $\alpha$ is a basic $k$-form, then $\star \alpha$ is a basic $(2n-k)$-form. 
\end{lemma}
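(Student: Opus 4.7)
The plan is to reduce to a local coordinate computation via the transverse Darboux theorem (Theorem \ref{Darboux-thm}). Horizontality of $\star\alpha$ is automatic from the construction, since $\star$ was defined as an operator $\Omega^k_{hor}(M)\to \Omega^{2n-k}_{hor}(M)$. So the content of the lemma is that $\star\alpha$ is additionally $\mathcal{F}$-invariant, i.e.\ killed by $\mathcal{L}_X$ for every $X\in \Gamma(P)$.

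First I would fix a point $p\in M$ and, using Theorem \ref{Darboux-thm}, pick a foliation chart $(U,\psi)$ around $p$ with transverse Darboux coordinates $\{x_1,y_1,\ldots,x_n,y_n\}$, so that $\omega$ pulls back to $\sum_{i=1}^n dx_i\wedge dy_i$ on $\psi(U)$. In such a chart, a form is horizontal exactly when it is a sum of wedge products of the $dx_i$'s and $dy_i$'s (with arbitrary smooth coefficients on $U$), and it is basic precisely when in addition those coefficients depend only on the transverse coordinates $x_i,y_i$. Thus, writing $\alpha=\sum_{I,J} f_{IJ}(x,y)\,dx^I\wedge dy^J$ with each $f_{IJ}$ a function of the transverse coordinates only, the lemma reduces to showing that $\star\alpha$ admits an expression of the same shape.

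To see this, note that the pointwise symplectic Hodge star is defined fibrewise from $\sigma$, which in the chosen chart has the constant-coefficient expression $\sum dx_i\wedge dy_i$. Hence, on $U$, $\star$ acts by the identity in the transverse-function slot and by the standard constant-coefficient linear symplectic Hodge star $\star_0$ of $(\R^{2n},\sum dx_i\wedge dy_i)$ in the form slot, giving
\[
\star\alpha \;=\; \sum_{I,J} f_{IJ}(x,y)\,\star_0(dx^I\wedge dy^J).
\]
The right-hand side is manifestly horizontal with coefficients depending only on the transverse coordinates, hence basic on $U$; since $p$ was arbitrary, $\star\alpha$ is basic on $M$.

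The only point requiring a little care is the assertion that the pointwise operator $\star$ really reduces to $\star_0$ on a Darboux patch. This amounts to the observation that the isomorphism $\sharp:\wedge^k Q^*_x\to\wedge^k Q_x$ entering the definition of $\star$ is represented, in the transverse Darboux frame, by the same constant matrix at every $x\in U$, simply because $\sigma$ itself has constant coefficients in that frame. Everything else is bookkeeping with multi-indices, and I would not write it out.
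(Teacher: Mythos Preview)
Your proof is correct and follows essentially the same approach as the paper: reduce to a transverse Darboux chart via Theorem \ref{Darboux-thm}, write $\alpha$ locally as $\sum f_{IJ}(x,y)\,dx^I\wedge dy^J$ with coefficients depending only on the transverse coordinates, and observe that because $\omega$ has constant coefficients in such a chart the symplectic Hodge star sends each monomial $dx^I\wedge dy^J$ to a constant multiple of another such monomial, so $\star\alpha$ retains the same local shape and is therefore basic. Your explicit remark that $\star$ reduces to the constant-coefficient model $\star_0$ is exactly the content of the paper's ``a straightforward calculation shows'' step.
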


\begin{proof}  To show that $\star \alpha$ is basic, it suffices to show that for any $p\in M$, there exists a foliation coordinate neighborhood $U$ of $p$, such that $\star \alpha$ has the following local expression on $U$.
\[ \star \alpha = \displaystyle \sum_I g(x_1, y_1, \cdots, x_n, y_n) dx_I\wedge dy_J,\]
where $\{x_1, y_1,\cdots, x_n,y_n\}$ are transverse Darboux coordinates on $U$, and $I$ and $J$ are multi-index.

Since $\alpha$ is a basic form, for any $p\in M$, there exists a foliation coordinate  neighborhood $U$ of $p$, equipped with transverse Darboux coordinates $\{x_1, y_1, \cdots, x_n, y_n\}$, such that 
\[ \alpha \vert_U= \displaystyle \sum f(x_1,y_1,\cdots, x_n, y_n)dx_I\wedge dy_J,\]
where $I$ and $J$ are multi-index. Without loss of generality, we may assume that
$\alpha\vert_U=f(x_1,y_1,\cdots, x_n, y_n)dx_I\wedge dy_J$ for some given multi-index
$I$ and $J$.

Since \[\omega\vert_U=\displaystyle \sum_{i=1}^n dx_i\wedge dy_i,\]
a straightforward calculation shows that
\[ \star \alpha=c f(x_1, y_1, \cdots, x_n, y_n) dx_{I'}\wedge dy_{J'},\]
where $c$ is a constant, and $I'$ and $J'$ are multi-indexes.  
Therefore $\star \alpha$ must also be a basic form.

\end{proof}


There are three important operators, the Lefschetz map $L$, the dual Lefschetz map $\Lambda$, and the degree counting map $H$, which are defined on horizontal forms as follows.

\begin{equation} \label{three-canonical-maps} \begin{aligned} & L :\Omega_{hor}^*(M) \rightarrow \Omega_{hor}^{*+2}(M), \,\,\,\alpha \mapsto \alpha \wedge \omega,\\
 & \Lambda: \Omega_{hor}^*(M) \rightarrow \Omega_{hor}^{*-2}(M),\,\,\,\alpha \mapsto \star L\star \alpha,\\
& H: \Omega^k_{hor}(M)\rightarrow \Omega^k_{hor}(M),\,\,\,H(\alpha)=(n-k)\alpha,\,\,\,\alpha \in \Omega^{k}_{hor}(M).\end{aligned}\end{equation}

Using transverse Ddarboux coordinates, we have the following local description of the operator $\Lambda$. We refer to \cite[Lemma 2.1.4]{He10} for a proof.

\begin{lemma} \label{canonical-local-form-Lambda} Let $(U, x_1,\cdots, x_n,y_1,\cdots, y_n)$ be a transverse Darboux coordinate chart on $M$. Then for any form $\alpha\in \Omega^*(U)$, we have
\[ \Lambda \alpha=\displaystyle \sum_{i=1}^n\iota_{\frac{\partial}{\partial x_i}} \iota_{\frac{\partial}{\partial y_i}}\alpha.\] 
\end{lemma}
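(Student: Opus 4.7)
\medskip

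The strategy is to reduce the identity to a pointwise algebraic statement on the standard symplectic vector space $(\R^{2n},\sigma_0)$ with $\sigma_0=\sum_{i=1}^n dx_i\wedge dy_i$. In the Darboux chart $U$, the transverse bundle $Q|_U$ is trivialized by the classes of $\partial/\partial x_i, \partial/\partial y_i$, and under this trivialization the transverse symplectic form $\omega$ pulls back fiberwise to $\sigma_0$. Consequently $L$, $\star$, and hence $\Lambda=\star L\star$ act fiberwise by the corresponding algebraic operators on $\wedge^*(\R^{2n})^*$, and the same is obviously true for the operator on the right-hand side. Since both sides are $C^\infty(U)$-linear, the lemma reduces to showing that on $\wedge^*(\R^{2n})^*$, the operator $\star L\star$ agrees with $\sum_{i=1}^n \iota_{\partial/\partial x_i}\iota_{\partial/\partial y_i}$.

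Next, I would verify this purely linear-algebraic statement by evaluating both sides on the monomial basis $dx_I\wedge dy_J$, where $I,J\subseteq\{1,\dots,n\}$. The right-hand side is immediate: the contraction $\iota_{\partial/\partial x_i}\iota_{\partial/\partial y_i}$ annihilates $dx_I\wedge dy_J$ unless $i\in I\cap J$, and for each such $i$ it produces $\pm dx_{I\setminus\{i\}}\wedge dy_{J\setminus\{i\}}$ with an explicitly computable sign. For the left-hand side, I would use the bi-linear pairing characterization of $\star$ established in Section~\ref{transverse-sym-Hodge} to obtain the standard closed-form expression for $\star(dx_I\wedge dy_J)$ on a Darboux basis (see e.g.\ the computation in \cite{brylinski;differential-poisson} or \cite{Yan96}); applying $L$, which wedges with $\sum_i dx_i\wedge dy_i$, and then $\star$ again produces a sum indexed precisely by $I\cap J$, which one matches term by term with the right-hand side. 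The identity $\star^2=\id$ noted in \eqref{star-square} is useful for simplifying intermediate expressions.

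The main obstacle is the sign bookkeeping in computing $\star$ explicitly on monomials; once organized properly (for instance by first reducing to the case $I=J=\{1,\dots,k\}$ using the symmetry of the Darboux basis, and then extending by the multiplicative behavior of $\star$ under the splitting $\R^{2n}=\R^2\oplus\cdots\oplus\R^2$ into symplectic planes) the verification becomes entirely mechanical. As an alternative, more conceptual route, one can observe that $\sum_i \iota_{\partial/\partial x_i}\iota_{\partial/\partial y_i}$ together with $L$ and $H$ satisfies the $\lie{sl}(2)$ commutation relations on $\wedge^*(\R^{2n})^*$, and that the same is true of $\star L\star$; since both operators have degree $-2$, agree on the highest weight vectors (the elements killed by the corresponding $\Lambda$), and $L$ is the same in both cases, uniqueness of the resulting $\lie{sl}(2)$-module structure forces them to coincide.
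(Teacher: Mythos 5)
Your first observation to make is that the paper does not actually prove this lemma: it states it and refers to \cite[Lemma 2.1.4]{He10} for the proof. So you are supplying an argument the paper omits. Your first route --- identify horizontal forms with sections of $\wedge^* Q$, note that $\star$, $L$, and hence $\Lambda=\star L\star$ act fiberwise and $C^\infty(U)$-linearly, trivialize $Q\vert_U$ by the Darboux frame so that everything reduces to the standard symplectic vector space $(\R^{2n},\sigma_0)$, and then verify on the monomials $dx_I\wedge dy_J$ --- is exactly the kind of direct local computation that He's thesis carries out, and the fiberwise reduction is the same mechanism the paper itself uses throughout Section \ref{transverse-sym-Hodge}. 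In outline this is the right proof. One small correction of scope: $\Lambda=\star L\star$ is only defined on horizontal forms, so the reduction should be stated for $\alpha\in\Omega^*_{hor}(U)$, even though the right-hand side of the lemma happens to make sense for arbitrary forms on $U$.

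However, the sign bookkeeping you defer as ``entirely mechanical'' is where the only real content of the lemma lies, and under the standard conventions for interior products and commutators it does not come out as printed. Test the identity on $\alpha=\omega$ with $n=1$: the relation $[\Lambda,L]=H$ of Lemma \ref{basic-sl2}, applied to the constant function $1$, forces $\Lambda\omega=H(1)=n$; on the other hand, with $\omega=dx\wedge dy$ one computes $\iota_{\partial/\partial x}\iota_{\partial/\partial y}(dx\wedge dy)=\iota_{\partial/\partial x}(-dx)=-1$. Since interior products anticommute, the operator $\sum_i \iota_{\partial/\partial x_i}\iota_{\partial/\partial y_i}$ satisfies $[\,\cdot\,,L]=-H$, and the formula consistent with the paper's own conventions is $\Lambda=\sum_i \iota_{\partial/\partial y_i}\iota_{\partial/\partial x_i}=-\sum_i \iota_{\partial/\partial x_i}\iota_{\partial/\partial y_i}$; the discrepancy is an ordering convention for the double contraction that has to be reconciled with \cite{He10} rather than waved off. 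Notably, your second, ``conceptual'' route is precisely where this would surface, since verifying that the candidate operator satisfies the $sl(2)$ relations with the paper's $H$ is its first step. That second route also needs one repair: ``uniqueness of the resulting $sl(2)$-module structure'' is too loose as stated. The correct argument is that if $\Lambda_1,\Lambda_2$ are degree $-2$ operators with $[\Lambda_1,L]=[\Lambda_2,L]=H$, then $\Lambda_1-\Lambda_2$ commutes with $L$; since primitivity is the condition $L^{n-k+1}\beta=0$, which involves only $L$ and the degree, Lemma \ref{yan's-result} shows both $\Lambda_i$ annihilate the same primitive forms, so $\Lambda_1-\Lambda_2$ kills every term $L^r\beta_r$ of a Lefschetz decomposition and therefore vanishes. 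With that formulation, and with the sign sorted out, both of your routes close the gap the paper leaves to the literature.
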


Exploring the one to one correspondence between horizontal forms on $M$ and sections of $\wedge^* Q$, it is easy to see that the actions of $L$, $\Lambda$ and $H$ on horizontal satisfy the following commutator relations.

\begin{equation} \label{sl2-module-on-forms} [  \Lambda, L]=H, \,\,\,[H, \Lambda]=2\Lambda,\,\,\,[H, L]=-2L.
\end{equation}

Note that $\omega$ is a basic form itself.  As an immediate consequence of Lemma \ref{basic-star}, we have the following result. 

\begin{lemma} \label{basic-sl2} The operators $L$, $\Lambda$ and $H$ map basic forms to basic forms, and satisfy all three commutator relations in Equation \ref{sl2-module-on-forms}.
\end{lemma}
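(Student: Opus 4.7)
The plan is to observe that this lemma is essentially an immediate corollary of Lemma \ref{basic-star} together with Equation (\ref{sl2-module-on-forms}), and so the proof reduces to two bookkeeping checks: preservation of the basic subspace, and inheritance of the commutator identities.

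First I would verify that each of $L$, $\Lambda$, $H$ sends $\Omega^*\basic(M)$ into itself. For $L$, since $\omega$ is itself basic (by definition of a transversely symplectic foliation) and the wedge product of two basic forms is basic (both $\iota_X$ and $\mathcal{L}_X$ for $X\in\Gamma(P)$ are derivations that annihilate basic forms), it is clear that $\alpha\wedge\omega$ is basic whenever $\alpha$ is basic. For $H$, the operator merely multiplies a basic $k$-form by the scalar $n-k$, so there is nothing to check. For $\Lambda = \star L \star$, I would combine the previous step with Lemma \ref{basic-star}: if $\alpha\in\Omega^k\basic(M)$, then $\star\alpha\in\Omega^{2n-k}\basic(M)$ by Lemma \ref{basic-star}, then $L\star\alpha\in\Omega^{2n-k+2}\basic(M)$ by the argument for $L$, and finally $\star L\star\alpha\in\Omega^{k-2}\basic(M)$ by Lemma \ref{basic-star} once more.

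Second, I would observe that the three commutator identities
\[ [\Lambda, L] = H, \quad [H, \Lambda] = 2\Lambda, \quad [H, L] = -2L \]
already hold as operator identities on the larger space $\Omega^*_{\mathrm{hor}}(M)$ by Equation (\ref{sl2-module-on-forms}). Since $\Omega^*\basic(M) \subset \Omega^*_{\mathrm{hor}}(M)$ is a linear subspace preserved by each of $L$, $\Lambda$, $H$ (by the first step), the same identities hold automatically upon restriction to $\Omega^*\basic(M)$.

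There is no real obstacle here; the only substantive input is Lemma \ref{basic-star}, which allows $\Lambda$ to be well-defined on basic forms. All three commutator relations are inherited from the horizontal case for free. If a more self-contained verification were desired, one could alternatively use the local formula of Lemma \ref{canonical-local-form-Lambda} in transverse Darboux coordinates to check preservation of basic forms by $\Lambda$ directly, but this is strictly less efficient than invoking Lemma \ref{basic-star}.
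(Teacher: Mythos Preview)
Your proposal is correct and matches the paper's approach exactly: the paper simply notes that $\omega$ is basic and declares the lemma an immediate consequence of Lemma~\ref{basic-star}, which is precisely the argument you have spelled out in detail. Your write-up just makes explicit the two bookkeeping checks (preservation of $\Omega^*\basic(M)$ by each operator, and inheritance of the commutator identities by restriction from $\Omega^*_{\mathrm{hor}}(M)$) that the paper leaves implicit.
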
 

Therefore, these three operators define a representation of the Lie algebra $sl(2)$ on $\Omega_{bas}(M)$. Although the $sl_2$-module
$\Omega_{bas}(M)$ is infinite dimensional, there are only finitely many eigenvalues of the operator $H$. Such an $sl_2$-module is said to be of finite $H$-type, and has been studied in great details in \cite{Ma95} and \cite{Yan96}.  Applying  \cite[Corollary 2.6]{Yan96} to the present situation, we have the following result.

\begin{lemma} \label{yan's-result}  Let $(M,\mathcal{F},\omega)$ be a transversely symplectic foliation of co-dimension $2n$. For any $0\leq k\leq n$, $\alpha\in \Omega_{bas}^k(M)$ is said to be primitive if  $L^{n-k+1}\alpha=0$. Then we have that
\begin{enumerate}

 \item [a)]
 a basic $k$-form $\alpha$ is primitive if and only if $\Lambda \alpha=0$;
\item [b)] For any $0\leq k\leq n$, the Lefschetz map
\[ L^{n-k}: \Omega_{bas}^k(M)\rightarrow \Omega_{bas}^{2n-k}(M),\,\,\,\,\alpha\mapsto \omega^{n-k}\wedge\alpha\]
is a linear isomorphism.

 \item[c)] any differential form
$\alpha_k \in \Omega_{bas}^k(M)$ admits a unique Lefschetz decomposition
\begin{equation}\label{lefschetz-decompose-forms} \alpha_k =\displaystyle \sum_{r\geq \text{max}(\frac{k-n}{2}, 0)} \dfrac{L^r}{r!}\beta_{k-2r},\end{equation}
where $\beta_{k-2r}$ is a primitive basic form of degree $k-2r$.

\end{enumerate}
\end{lemma}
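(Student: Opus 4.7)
The plan is to invoke the structure theory of $sl_2$-representations of finite $H$-type developed in \cite{Yan96}, applied to the graded $sl_2$-module $\Omega_{bas}(M) = \bigoplus_{k=0}^{2n}\Omega^k_{bas}(M)$.

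First I would verify that $\Omega_{bas}(M)$ fits into Yan's framework. By Lemma \ref{basic-sl2} the operators $L$, $\Lambda$, $H$ preserve $\Omega_{bas}(M)$ and satisfy the $sl_2$ commutation relations (\ref{sl2-module-on-forms}). Since $H$ acts on $\Omega^k_{bas}(M)$ by the scalar $n-k$, its weights take values in the finite set $\{-n, -n+1, \ldots, n\}$. This is precisely the condition defining an $sl_2$-module of finite $H$-type; the crucial point is that Yan's theory does not require each weight space to be finite dimensional, only that the set of weights be finite.

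Once this setup is in place, each of (a), (b), (c) is a direct translation of \cite[Corollary 2.6]{Yan96}. The commutation relations show that $L$ lowers the $H$-weight by $2$ and $\Lambda$ raises it by $2$, so $\Lambda$ plays the role of the raising operator; hence $\alpha \in \Omega^k_{bas}(M)$ is a highest weight vector exactly when $\Lambda \alpha = 0$, and by the representation theory of $sl_2$ this is equivalent to the chain $\alpha, L\alpha, L^2\alpha, \ldots$ terminating at $L^{n-k+1}\alpha = 0$, which gives (a). For (b), within each highest weight irreducible summand the operator $L^{n-k}$ identifies the weight-$(n-k)$ subspace isomorphically with the weight-$(-(n-k))$ subspace, and these assemble to the claimed isomorphism $L^{n-k} : \Omega^k_{bas}(M) \to \Omega^{2n-k}_{bas}(M)$. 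For (c), the decomposition follows because $\Omega_{bas}(M)$ is a direct sum of cyclic highest weight modules; the normalization $L^r/r!$ and the range $r \geq \max((k-n)/2, 0)$ are determined by the standard identity $\Lambda L^r \beta = r(w-r+1)L^{r-1}\beta$ for a primitive vector $\beta$ of weight $w = n-k+2r$, together with the constraint that primitive forms live in degrees at most $n$.

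The only substantive obstacle is handling the possible infinite dimensionality of each $\Omega^k_{bas}(M)$, since textbook proofs of the $sl_2$ Lefschetz decomposition are typically phrased for finite dimensional modules. This is precisely the gap closed by \cite{Yan96}: by working weight-by-weight and decomposing into highest weight isotypic components, the standard $sl_2$ arguments go through verbatim in the finite $H$-type setting. Thus the proof ultimately reduces to the verification that $\Omega_{bas}(M)$ has finite $H$-type and a direct appeal to \cite[Corollary 2.6]{Yan96}.
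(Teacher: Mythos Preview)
Your proposal is correct and matches the paper's approach exactly: the paper simply observes (in the paragraph preceding the lemma) that by Lemma~\ref{basic-sl2} the space $\Omega_{bas}(M)$ is an $sl_2$-module of finite $H$-type, and then invokes \cite[Corollary~2.6]{Yan96} without further argument. Your write-up in fact supplies more detail than the paper does.
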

\begin{remark}Throughout the rest of this paper, we will denote the space of primitive basic
 $k$-forms on $M$ by $\mathcal{P}_{bas}^k(M)$.

\end{remark}



Applying \cite[Theorem 3.16]{W80} to the present situation, we have the following Weil identity.

\begin{lemma}\label{Weil-identity}Let $(M,\omega,\Omega)$ be a transversely symplectic foliation of co-dimension $2n$, and let $\alpha$ be a primitive
$k$-form in $\Omega_{bas}^k(M)$.Then for any $r\leq n-k$,
\[\star L^r\alpha=(-1)^{\frac{k(k-1)}{2}}\dfrac{r!}{(n-k-r)!}L^{n-k-r}\alpha.\]
\end{lemma}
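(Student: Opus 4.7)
The plan is to reduce the identity to a pointwise statement in symplectic linear algebra and then invoke the classical Weil identity on a symplectic vector space. Recall that the operators $L$, $\Lambda$, and $\star$ are all defined pointwise via the identification $\Omega^*_{hor}(M) \cong \Gamma(\wedge^* Q^*)$, where $Q = TM/P$. At each point $x \in M$, the fiber $Q_x$ is a $2n$-dimensional symplectic vector space with symplectic form $\sigma_x$ induced from $\omega$, and the symplectic Hodge star, Lefschetz operator, and dual Lefschetz operator on $\wedge^* Q_x^*$ are precisely the fiberwise restrictions of $\star$, $L$, $\Lambda$. The primitivity condition $L^{n-k+1}\alpha = 0$ (equivalently $\Lambda \alpha = 0$, by Lemma \ref{yan's-result}) is itself a pointwise condition on $\alpha_x \in \wedge^k Q_x^*$. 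So the global identity reduces, pointwise, to the corresponding identity for primitive $k$-forms on an abstract symplectic vector space.

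On a symplectic vector space, this is exactly the content of the classical Weil identity, proved in \cite[Theorem 3.16]{W80}, which I would invoke directly. For the reader's benefit, I would briefly sketch the standard argument: the $sl_2$-triple $(L, \Lambda, H)$ acts on $\wedge^* Q_x^*$; a primitive $k$-form $\alpha$ is a highest-weight vector of weight $n-k$, and the cyclic submodule it generates is irreducible of dimension $n-k+1$. Using the commutator identity
\[
[\Lambda, L^r] = r(H - r + 1)L^{r-1},
\]
applied iteratively, together with the base case $\star \alpha = (-1)^{k(k-1)/2}\frac{1}{(n-k)!}L^{n-k}\alpha$ for primitive $\alpha$ (which follows directly from the definition of $\star$ through the symplectic volume form $\sigma^n/n!$ together with the Lefschetz decomposition), one sees that $\star$ exchanges the weight spaces spanned by $L^r\alpha$ and $L^{n-k-r}\alpha$, and a short combinatorial bookkeeping produces the scalar $r!/(n-k-r)!$.

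Since $L$, $\Lambda$, and $\star$ are all $C^\infty(M)$-tensorial on horizontal forms, the pointwise reduction is automatic, and there is no genuine obstacle. The only minor thing to verify before applying the identity is that $\star L^r\alpha$ is again a basic form, but this is immediate from Lemma \ref{basic-star} and Lemma \ref{basic-sl2}. Thus the proof is essentially a citation of \cite[Theorem 3.16]{W80} after observing that the setup of transversely symplectic foliations fits its hypotheses fiberwise on $Q$.
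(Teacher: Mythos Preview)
Your proposal is correct and matches the paper's approach: the paper does not give a proof at all, but simply states the lemma as a direct application of \cite[Theorem 3.16]{W80} to the present situation. Your explanation of the pointwise reduction via the symplectic bundle $Q$ is exactly the justification the paper leaves implicit, so you have merely elaborated what the paper takes for granted.
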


The symplectic Hodge operator gives rise to the symplectic adjoint operator $\delta$ of the exterior differential $d$ as follows.
 \[ \delta \alpha_k=(-1)^{k+1}\star d\star\alpha_k,\,\,\,\alpha_k\in\Omega_{bas}^k(M).\]

It is easy to see that $\delta \circ \delta=0$. Thus we have the following differential complex of homology.

$$
\xymatrix@C=0.5cm{
  \cdot\cdot\cdot \ar[r] & \Omega^{k+1}_{\textmd{bas}}(M) \ar[r]^{\delta} & \Omega^{k}_{\textmd{bas}}(M) \ar[r]^{\delta} & \Omega^{k-1}_{\textmd{bas}}(M) \ar[r]^{\,\,\,\,\,\delta} & \cdot\cdot\cdot }
$$

The homology of the above complex, denoted by $H_{\delta}(\Omega_{bas}(M),\R)$, is called the \emph{basic $\delta$-homology} of the foliated manifold
$M$. We note that the symplectic hodge star operator maps a $\delta$-closed basic form of degree $k$ to a $d$-closed differential basic form of degree $2n-k$. Using the identity $\star^2=\text{id}$, it is straightforward to show the following result.

\begin{lemma}\label{homology-cohomology} The symplectic Hodge star operator induces a linear isomorphism
\[ \star: H_{\delta}(\Omega_{bas}^k(M),\R) \cong H^{2n-k}_B(M,\R) .\]
\end{lemma}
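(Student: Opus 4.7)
The plan is to show that $\star$ intertwines the operators $\delta$ and $d$ (up to sign) on basic forms, so that it descends to a well-defined map between $H_{\delta}(\Omega_{bas}(M),\R)$ and $H^*_B(M,\R)$, and then use $\star^2=\text{id}$ to conclude that this induced map is an involution, hence an isomorphism.

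First I would note that by Lemma \ref{basic-star}, the symplectic Hodge star is a well-defined linear isomorphism $\star:\Omega_{bas}^k(M)\to \Omega_{bas}^{2n-k}(M)$ on the level of forms, with inverse $\star$ itself. Next I would establish the intertwining relation. Starting from the definition $\delta\alpha_k=(-1)^{k+1}\star d\star\alpha_k$ for $\alpha_k\in \Omega_{bas}^k(M)$, applying $\star$ to both sides and using $\star^2=\text{id}$ gives
\[
\star\delta\alpha_k=(-1)^{k+1}d\star\alpha_k,
\]
i.e. $d\star = (-1)^{k+1}\star\delta$ on $\Omega_{bas}^k(M)$.

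This identity immediately has two consequences. On the one hand, $\delta\alpha_k=0$ if and only if $\star\delta\alpha_k=0$ (since $\star$ is injective), which by the relation above is equivalent to $d\star\alpha_k=0$. On the other hand, if $\alpha_k=\delta\beta_{k+1}$ for some $\beta_{k+1}\in\Omega_{bas}^{k+1}(M)$, then
\[
\star\alpha_k=\star\delta\beta_{k+1}=(-1)^{k+2}d\star\beta_{k+1},
\]
so $\star\alpha_k$ is $d$-exact; and conversely, if $\star\alpha_k=d\gamma$ for some basic $(2n-k-1)$-form $\gamma$, then applying $\star$ again and using $\star^2=\text{id}$ together with the intertwining relation expresses $\alpha_k$ as $\pm\delta(\star\gamma)$. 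Thus $\star$ carries the $\delta$-closed forms bijectively onto the $d$-closed basic forms (in complementary degree), and the $\delta$-exact forms bijectively onto the $d$-exact basic forms.

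Consequently $\star$ descends to a well-defined linear map
\[
\star: H_{\delta}(\Omega_{bas}^k(M),\R)\longrightarrow H_B^{2n-k}(M,\R).
\]
The identity $\star^2=\text{id}$ at the level of forms implies the same identity at the level of (co)homology, so the induced map is its own inverse, and hence a linear isomorphism. There is no real obstacle here beyond careful bookkeeping of signs and verifying that the $\pm$ in the intertwining formula does not obstruct the descent; since an overall sign does not affect being closed or exact, the argument goes through without difficulty.
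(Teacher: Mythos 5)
Your proposal is correct and follows exactly the route the paper sketches: the paper's proof consists of observing that $\star$ sends $\delta$-closed basic $k$-forms to $d$-closed basic $(2n-k)$-forms and then invoking $\star^2=\mathrm{id}$, which is precisely the intertwining relation $\star\delta\alpha_k=(-1)^{k+1}d\star\alpha_k$ and its consequences that you spell out. You have simply supplied the sign bookkeeping and the closed/exact correspondence that the paper leaves as ``straightforward.''
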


It is noteworthy that the symplectic adjoint operator $\delta$ anti-commutes with $d$. In this context, a basic form $\alpha$ is said to be symplectic harmonic if and only if $d\alpha=\delta\alpha=0$. We define 

\[\begin{split}&\Omega_{hr}^k(M)=\{\alpha\in \Omega^k_{bas}(M)\,\vert\, d\alpha=\delta\alpha=0\},
\\& H_{hr}^k(M,\R)=\dfrac{ \text{Im}\left(\Omega^k_{hr}(M)\hookrightarrow\Omega^k_{bas}(M)\right)}{\text{im} (\Omega_{bas}^{k-1}(M)\xrightarrow{d}\Omega^k_{bas}(M))}.\end{split}\]

It is easy to see that the operators $L$, $\Lambda$ and $H$ map harmonic forms to harmonic forms, and therefore turn $\Omega_{hr}^*(M)$ into a $sl_2$ module of finite $H$-type. Applying \cite[Corollary 2.6]{Yan96} again, we have the following result. \begin{lemma}\label{har-forms} Let $(M,\mathcal{F},\omega)$ be a transversely symplectic foliation of co-dimension $2n$. Then for any $0\leq k\leq n$,
the Lefschetz map
\[ L^k: \Omega^{n-k}_{hr}(M)\rightarrow \Omega^{n+k}_{hr}(M)\] is an isomorphism.
\end{lemma}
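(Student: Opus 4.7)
The strategy is to recognize $\Omega^{*}_{hr}(M)$ as an $sl_2$-submodule of $\Omega^{*}_{bas}(M)$ of finite $H$-type, and then quote \cite[Corollary 2.6]{Yan96} exactly as was done for Lemma \ref{yan's-result}. So the plan reduces to three verifications: (a) the operators $L$, $\Lambda$, and $H$ leave the space $\Omega^{*}_{hr}(M)$ of symplectic harmonic basic forms invariant; (b) the three commutator relations of \eqref{sl2-module-on-forms} hold on this subspace (which is automatic from Lemma \ref{basic-sl2}); and (c) only finitely many eigenvalues of $H$ occur on $\Omega^{*}_{hr}(M)$.

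The key step is (a). The invariance under $H$ is immediate since $H$ is the scalar $(n-k)$ on degree $k$. For $L$, first observe that $d(L\alpha)=L(d\alpha)=0$ because $\omega$ is $d$-closed. The invariance of the symplectic codifferential under $L$ is the delicate point: it relies on the identity $[\delta,L]=-d$, which in turn is a formal consequence of Brylinski's relation $\delta=[d,\Lambda]$, the $sl_2$-commutator $[\Lambda,L]=H$, and $[H,d]=-d$ (each of which is valid on $\Omega_{bas}(M)$ by Lemma \ref{basic-sl2} together with $dL=Ld$). Granting this, $\delta(L\alpha)=L(\delta\alpha)-d\alpha=0$ whenever $\alpha\in\Omega^{*}_{hr}(M)$. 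For $\Lambda$ the cleanest route is via the symplectic star: Lemma \ref{basic-star} and $\star^2=\mathrm{id}$ show that $\star$ interchanges the conditions $d\alpha=0$ and $\delta\alpha=0$ up to sign, hence $\star$ preserves $\Omega^{*}_{hr}(M)$; combining this with the $L$-invariance already established and the definition $\Lambda=\star L\star$ shows that $\Lambda$ preserves $\Omega^{*}_{hr}(M)$ as well.

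For (c), a basic form on a codimension $2n$ transversely symplectic foliation has degree between $0$ and $2n$ (as follows from the local normal form in the proof of Lemma \ref{basic-star}), so the eigenvalues of $H$ on $\Omega^{*}_{hr}(M)$ lie in the finite set $\{-n,-n+1,\dots,n-1,n\}$. Thus $\Omega^{*}_{hr}(M)$ is an $sl_2$-module of finite $H$-type. By \cite[Corollary 2.6]{Yan96}, for every integer $k\geq 0$, the operator $L^k$ restricts to a linear isomorphism between the $H$-eigenspaces of weights $k$ and $-k$, which are precisely $\Omega^{n-k}_{hr}(M)$ and $\Omega^{n+k}_{hr}(M)$. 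This yields the desired conclusion.

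The main obstacle I foresee is pinning down the identity $[\delta,L]=-d$ in the basic setting with the correct sign; once that Kähler-type commutator is in hand, everything else is formal and parallels the proof of Lemma \ref{yan's-result} line by line.
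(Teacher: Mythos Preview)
Your approach matches the paper's exactly: the paper simply remarks that $L$, $\Lambda$, $H$ preserve harmonic forms, so $\Omega_{hr}^*(M)$ is an $sl_2$-module of finite $H$-type, and then invokes \cite[Corollary 2.6]{Yan96}. Regarding the sign concern you flag: the paper's Lemma~\ref{commutator} records $[\delta,L]=d$ (not $-d$), but this is immaterial for the invariance argument since for harmonic $\alpha$ both $L\delta\alpha$ and $d\alpha$ vanish.
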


\begin{definition} \label{weak-lefschetz-property}
Let $(M,\mathcal{F},\omega)$ be a transversely symplectic foliation of co-dimension $2n$. It is said to satisfy the {\bf transverse Hard Lefschetz property},  if for any $0\leq k\leq n$, the Lefschetz map
  \begin{equation}\label{foliation-Lefschetz-map} L^{n-k}: H^{k}_B(M,\R)\rightarrow H_B^{2n-k}(M,\R)\, \,\,[\alpha]_B\mapsto [\omega^{n-k}\wedge \alpha]_B
  \end{equation} is an isomorphism. More generally, for any integer $0\leq s\leq n-1$, $(M,\omega, \mathcal{F})$ is said to satisfy the {\bf transverse $s$-Lefschetz property}, if for any $0\leq k\leq s$, the Lefschetz map (\ref{foliation-Lefschetz-map}) is an isomorphism.

 \end{definition}

\begin{remark}
 Let $(X,\omega)$ be a $2n$ dimensional symplectic manifold. In the literature of symplectic Hodge theory, c.f. \cite{Ma95}, \cite{Yan96}, $X$ is said to satisfy the Hard Lefschetz property if for any $0\leq k\leq n$, the Lefschetz map (\ref{Lefschetz-map}) is surjective.  By the Mathieu's theorem \cite{Ma95}, this is equivalent to saying that the Brylinski conjecture holds for $X$.  However, sometimes  the Hard Lefschetz property is also used in a slightly stronger sense, c.f. \cite{Gui01}, in which $(X,\omega)$ is said to satisfy this property if for each $0\leq k\leq n$, the Lefschetz map (\ref{Lefschetz-map}) is an isomorphism. This strengthening is necessary in order to establish the symplectic $d\delta$-lemma. Clearly, when the symplectic manifold $X$ is compact, these two versions of the Hard Lefschetz property are equivalent to each other due to the Poincar\'e duality.

Analogously, on a transversely symplectic foliation, there are two slightly different versions of the transverse Hard Lefschetz property. However, the Poincar\'e duality may not hold for basic cohomology groups even if $M$ is compact. For this reason, in Definition \ref{weak-lefschetz-property}, we use the Hard Lefschetz property in the strong sense,  so that we can establish the symplectic $d\delta$-lemma.


\end{remark}




\begin{definition}\label{primitive1}Let $(M,\omega, \Omega)$ be a transversely symplectic foliation of co-dimension $2n$. For any $0\leq r \leq n$, the $r$-th primitive basic cohomology group, $PH_B^{r}(M,\R)$, is defined as follows.
 \[   PH_B^r (M,\R)= \text{ker}(L^{n-r+1}: H_B^r(M,\R)\rightarrow H_B^{2n-r+2}(M,\R)) .\]
\end{definition}

Finally, we collect here a few commutator relations which we will use later in this paper. We refer to \cite{He10} for a detailed proof.

\begin{lemma} \label{commutator} \[ [d, \Lambda]=\delta,\,\,\, [\delta, L]=d, \,\,\,[d\delta, L]=0,\,\,\,[d\delta,\Lambda]=0.\]

\end{lemma}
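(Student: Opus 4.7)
The plan is to reduce all four identities to the single fundamental identity $[d,\Lambda]=\delta$, which we establish by a direct local computation in transverse Darboux coordinates; the remaining three identities will then follow from symplectic Hodge star manipulations and the nilpotency of $d$.

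First I would use Theorem \ref{Darboux-thm} to cover $M$ by foliation charts with transverse Darboux coordinates $\{x_1,y_1,\dots,x_n,y_n\}$. Since $d$, $L$, $\Lambda$, and $\star$ are all local operators that preserve the subspace of basic forms (Lemma \ref{basic-star}, Lemma \ref{basic-sl2}), and a basic form on such a chart has the form $\sum f(x,y)\,dx_I\wedge dy_J$ with $f$ depending only on transverse coordinates, the problem reduces to checking the identities on the corresponding forms on an open subset of $\mathbf{R}^{2n}$ with its standard symplectic structure $\sigma=\sum dx_i\wedge dy_i$. For $[d,\Lambda]=\delta$ I would then invoke Lemma \ref{canonical-local-form-Lambda}, which gives $\Lambda=\sum_{i=1}^n \iota_{\partial/\partial x_i}\iota_{\partial/\partial y_i}$, and expand $[d,\iota_{\partial/\partial x_i}\iota_{\partial/\partial y_i}]$ using the Cartan calculus identity $[d,\iota_X]=\mathcal{L}_X$ for the commuting constant vector fields $\partial/\partial x_i, \partial/\partial y_i$. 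This is precisely Brylinski's classical computation, and it yields the right-hand side $(-1)^{k+1}\star d\star=\delta$ on basic $k$-forms.

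For the second identity $[\delta,L]=d$, I would use the relations $\star^2=\id$, $\star\Lambda=L\star$, and $\star L=\Lambda\star$ (all immediate from $\Lambda=\star L\star$) together with $\delta=(-1)^{k+1}\star d\star$. Applying $\star$ on both sides of $[d,\Lambda]=\delta$ (with careful sign tracking using the degree shift of each operator) turns the identity into $[\delta,L]=d$; the analogous manipulation starting from the same identity and moving $\star$ past both $d$ and $\Lambda$ gives $\delta\Lambda\alpha=(-1)^{k-1}\star dL\star\alpha=(-1)^{k-1}\star Ld\star\alpha=(-1)^{k-1}(-1)^{k+1}\Lambda\delta\alpha=\Lambda\delta\alpha$, so that the auxiliary relation $[\delta,\Lambda]=0$ also holds. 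The final two identities are then purely formal consequences: using $[d,L]=0$ (since $d\omega=0$), $d^2=0$, and the identities just established,
\[
d\delta L = d(L\delta+d) = dL\delta = Ld\delta, \qquad d\delta\Lambda = d(\Lambda\delta) = (\Lambda d+\delta)\delta = \Lambda d\delta,
\]
so $[d\delta,L]=0$ and $[d\delta,\Lambda]=0$.

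The main obstacle is the local-to-global step: I must be certain that the definitions of $\delta$, $L$, and $\Lambda$ that we have introduced intrinsically on basic forms really do coincide, in a transverse Darboux chart, with the corresponding operators of classical symplectic linear algebra on $\mathbf{R}^{2n}$. This is where Lemma \ref{canonical-local-form-Lambda} and the one-to-one correspondence between horizontal forms on $M$ and sections of $\wedge^{*}Q$ (used to define $\star$ pointwise) are essential, since without them the transverse Hodge star could a priori mix in derivatives along the leaves. Once this reduction is cleanly in place, the Brylinski-Koszul computation transports verbatim, and the remaining identities follow from the Hodge star/sign bookkeeping outlined above.
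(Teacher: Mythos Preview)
Your proposal is correct and follows the standard Brylinski--Koszul route: reduce $[d,\Lambda]=\delta$ to a local computation in transverse Darboux coordinates via Lemma~\ref{canonical-local-form-Lambda}, then obtain the remaining identities by conjugating with $\star$ and using $d^2=\delta^2=0$ together with $[d,L]=0$. The paper itself does not give a proof of Lemma~\ref{commutator} but simply refers to \cite{He10}; your argument is precisely the classical one that this reference carries over to the foliated setting, so there is nothing to compare.
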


\section{ Symplectic $d\delta$-lemma on transversely symplectic foliations }
\label{ddelta-lemma}
In this section, we extend the symplectic Hodge theoretic results to transversely symplectic foliations. Analogous to the treatment used in \cite{He10}, our proof follows closely the original arguments used in \cite{Gui01}.  Nevertheless, for completeness, we have made our proof as self-contained as possible. It is worth mentioning that He proved the symplectic $d\delta$-lemma on odd dimensional symplectic manifolds under one extra condition,  that there exists a so called connection one form on the foliated manifold, c.f. \cite[Theorem 3.4.1]{He10}. In the present paper, we use a slightly different argument,  which allows us to establish the $d\delta$-lemma without this assumption.



 \begin{theorem}\label{Mathieu's-theoremv2}
Let $(M,\mathcal{F},\omega)$ be a transversely symplectic foliation of co-dimnesion $2n$, and let $0\leq s \leq n - 1$. Then for any $0\leq k\leq s$, the Lefschetz map (\ref{foliation-Lefschetz-map}) is surjective
if and only if either one of the following equivalent statements holds.
\begin{itemize}

\item[1)] $H^k_{hr}(M,\omega) = H^k_B(M,\R)$, for any $0\leq k \leq s + 2$, and $
H^{2n-k}_{hr}(M,\omega) = H_B^{2n-k}(M,\R)$, for any $0\leq k \leq s$.
\item[2)] $H^{2n-k}_{hr}(M,\omega) = H_B^{2n-k}(M,\R)$ for any $0\leq k \leq s$.
\end{itemize}
\end{theorem}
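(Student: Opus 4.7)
The plan is to adapt the proofs of Mathieu's theorem due to Yan \cite{Yan96} and Guillemin \cite{Gui01} to the transversely symplectic setting, following the template of He \cite{He10} while eliminating the auxiliary connection one-form hypothesis used there. All of the structural tools are already assembled in Section~\ref{transverse-sym-Hodge}: the transverse Lefschetz decomposition (Lemma~\ref{yan's-result}), the Weil identity (Lemma~\ref{Weil-identity}), the Lefschetz isomorphism on basic harmonic forms (Lemma~\ref{har-forms}), and the commutator relations (Lemma~\ref{commutator}). The proof is essentially a matter of organizing these into the right induction.

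The implication $(1)\Rightarrow(2)$ is tautological. For $(2)\Rightarrow$ surjectivity of (\ref{foliation-Lefschetz-map}) in the range $0\le k\le s$, given $[\eta]\in H^{2n-k}_B(M)$, hypothesis $(2)$ supplies a symplectic harmonic representative $\eta\in\Omega^{2n-k}_{hr}(M)$. By Lemma~\ref{har-forms} the map $L^{n-k}:\Omega^k_{hr}(M)\to\Omega^{2n-k}_{hr}(M)$ is an isomorphism, so $\eta=\omega^{n-k}\wedge\gamma$ for some $\gamma\in\Omega^k_{hr}(M)$, and $[\gamma]_B$ is the desired preimage.

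The substantive work is in the direction Lefschetz surjectivity $\Rightarrow(1)$. I would first establish $H^{2n-k}_{hr}(M)=H^{2n-k}_B(M)$ for $0\le k\le s$, and then deduce $H^k_{hr}(M)=H^k_B(M)$ for $0\le k\le s+2$ from it using the transverse Lefschetz decomposition. For the high-degree statement, given a $d$-closed $\eta\in\Omega^{2n-k}_B(M)$, the surjectivity hypothesis in degree $k$ produces a $d$-closed $\alpha\in\Omega^k_B(M)$ and a basic form $\psi$ with $\eta=\omega^{n-k}\wedge\alpha+d\psi$; writing $\alpha$ in its primitive components via Lemma~\ref{yan's-result} and invoking the Weil identity converts each Lefschetz iterate of a primitive form into its symplectic Hodge star, which transmutes the $d$-closedness of $\omega^{n-k}\wedge\alpha$ into the $\delta$-closedness we need. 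For the low-degree statement, given $[\alpha]\in H^k_B(M)$ with $k\le s+2$, write $\alpha=\sum_{r\ge 0}L^r\beta_{k-2r}/r!$ with primitive components, and use $\delta=[d,\Lambda]$ and $[\delta,L]=d$ from Lemma~\ref{commutator} to rewrite $\delta\alpha$ as a sum of exact obstructions; solving those obstructions requires preimages under Lefschetz maps whose surjectivity is guaranteed precisely in the range $0\le k\le s$, which forces the degree shift $s\mapsto s+2$ in statement~$(1)$.

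The main obstacle is the bookkeeping of this induction: the primitive components of a degree-$k$ class have degrees $k,k-2,\ldots$, and their obstructions live two degrees lower still, so one must carefully match which Lefschetz surjectivity hypothesis is consumed at each stage in order to justify the precise numerical shift from $s$ to $s+2$. A secondary difficulty, as noted just before the theorem, is removing the connection-one-form assumption of \cite[Theorem~3.4.1]{He10}; this is possible because every $sl_2$-identity we use (Lemma~\ref{basic-sl2}) and every adjoint identity (Lemma~\ref{commutator}) was derived intrinsically from the transverse symplectic structure on $\wedge^* Q$, with no appeal to a horizontal distribution on $TM$.
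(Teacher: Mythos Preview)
Your treatment of $(1)\Rightarrow(2)$ and $(2)\Rightarrow$ surjectivity matches the paper exactly. The problem is in the direction surjectivity $\Rightarrow(1)$, where you invert the order the paper uses, and the inversion does not survive scrutiny.

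You propose to establish $H^{2n-k}_{hr}=H^{2n-k}_B$ first, by writing $\eta=\omega^{n-k}\wedge\alpha+d\psi$ with $\alpha$ $d$-closed of degree $k$, and then using the Weil identity to transmute $d$-closedness of $\omega^{n-k}\wedge\alpha$ into $\delta$-closedness. But this does not work. If $\alpha=\sum_r L^r\beta_r$ is the primitive decomposition, then $\star(L^{n-k}\alpha)=\sum_r C_r L^r\beta_r$ with constants $C_r$ that genuinely depend on $r$. The condition $d\alpha=0$ says $\sum_r L^r d\beta_r=0$, whereas $\delta(L^{n-k}\alpha)=0$ demands $\sum_r C_r L^r d\beta_r=0$; these are independent linear relations among the $L^r d\beta_r$ unless $\alpha$ has a single primitive component. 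Equivalently, from $[\delta,L]=d$ one gets $\delta L^{n-k}\alpha=L^{n-k}\delta\alpha$ (since $d\alpha=0$), and there is no reason for $L^{n-k}\delta\alpha$ to vanish. In short, you cannot produce a harmonic high-degree representative without first having a harmonic low-degree one.

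The paper proceeds in the opposite order, and the key device you are missing in low degree is the \emph{primitive representative trick}. One first shows $H^k_B=PH^k_B+\operatorname{im}L$ for $k\le s+2$ (this is where the Lefschetz surjectivity in degrees $\le s$ is consumed, explaining the shift $s\mapsto s+2$). Then for $[\alpha]_B\in PH^k_B$ one has $L^{n-k+1}\alpha=d\gamma$; writing $\gamma=L^{n-k+1}\beta$ via the form-level Lefschetz isomorphism of Lemma~\ref{yan's-result} gives $L^{n-k+1}(\alpha-d\beta)=0$, i.e.\ $\alpha-d\beta$ is a \emph{primitive} representative, hence $\Lambda(\alpha-d\beta)=0$ and $\delta(\alpha-d\beta)=[d,\Lambda](\alpha-d\beta)=0$. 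With the low-degree harmonic representatives in hand, the high-degree statement follows immediately from the commutative square relating $\Omega^*_{hr}$ and $H^*_B$ together with Lemma~\ref{har-forms}. Your sketch of ``rewriting $\delta\alpha$ as exact obstructions and solving them'' may be salvageable, but as written it does not isolate this primitive-representative step, which is where the argument actually closes.
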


\begin{proof}

\textbf{Step 1}. Clearly,  1) implies  2). Assume that 2) holds. We show that for any $0\leq k\leq s$, the Lefschetz map (\ref{foliation-Lefschetz-map}) is surjective. To this end, we consider the following commutative diagram.
\begin{equation}\label{commutative-diagram}\xymatrix{\ar @{}  [dr] \Omega_{hr}^k(M)\ar[d]\ar[r]^-{\wedge\omega^{n-k}} &\Omega_{hr}^{2n-k}(M)\ar[d]\\
H_B^k(M,\R)\ar[r]^-{\wedge[\omega^{n-k}]} &H^{2n-k}_B(M,\R)}\end{equation}
Here two vertical maps are given by the composition of the inclusion map $\Omega_{hr}^*(M)\hookrightarrow \text{ker}\left(\Omega^*_{B}(M)\xrightarrow{d}\Omega_B^{*+1}(M)\right)$ and the natural quotient map
\[\text{ker}\left(\Omega^*_{B}(M)\xrightarrow{d}\Omega_B^{*+1}(M)\right)\rightarrow H_B^*(M,\R).\]Since by Lemma \ref{har-forms}, the map $\Omega_{hr}^k(M)\xrightarrow{\wedge \omega^{n-k}} \Omega_{hr}^{2n-k}(M)$ is an isomorphism, it follows easily from Condition 2) that the bottom horizontal map must be surjective.

\textbf{Step 2}. Assume that for any $0\leq k\leq s$, the Lefschetz map (\ref{foliation-Lefschetz-map}) is surjective. We first show that  \begin{equation}\label{induction-harmonic}H^k_{hr}(M,\R) = H^k_B(M,\R),  \forall\, 0\leq k\leq s+2.\end{equation}  We claim that
for any $0\leq k\leq s+2$,  
\begin{equation}\label{primitive-decom-step1} H_B^k(M,\R)= PH_B^k(M,\R) +\text{im}\,L .\end{equation}

To see this, note that  $\forall\, [\alpha]_B\in H^k_B(M,\R)$, by the transverse $s$-Lefschetz property, there exists $[\beta]_B\in H^{k-2}_B(M,\R)$, such that $L^{n-k+1}[\alpha]_B=L^{n-k+2}[\beta]_B$. Thus $[\alpha]_B-L[\beta]_B\in PH^k_B(M,\R)$. This proves that
\[ [\alpha]_B=\left([\alpha]_B-L[\beta]_B\right)+L[\beta]_B\in PH_B^k(M,\R) +\text{im}\,L,\]
and establishes our claim.

To complete the proof, we proceed by induction on the degree of the cohomology group $H_B^k(M,\R)$. When $k=0, 1$, it is easy to see that any $0$-cycle and $1$-cycle are harmonic, which implies that  (\ref{induction-harmonic}) holds for $k=0,1$.  Assume (\ref{induction-harmonic}) is true for any $k<p\leq n$. To show that it holds for $k=p$, it suffices to show that any cohomology class 
in $PH_B^p(M,\R)$ admits a harmonic representative.

Let $[\alpha]_B\in PH^p_B(M,\R)$, where $p\leq s+2$. Then $L^{n-p+1}[\alpha]_B=0$. Thus there exists $\gamma\in \Omega^{2n-p+1}_{bas}(M)$, such that $L^{n-p+1} (\alpha)=d\gamma$. Since $L^{n-p+1}: \Omega_{bas}^{p-1}(M)\rightarrow  \Omega_{bas}^{2n-p+1}(M)$ is an isomorphism, there exists $\beta\in \Omega^{p-1}_{bas}(M)$ such that $\gamma=L^{n-p+1}(\beta)$. This implies that
$L^{n-p+1}(\alpha-d \beta)=0$. Equivalently, we have that $\Lambda(\alpha-d\eta)=0$. Thus $\delta(\alpha-d \beta )=
[d,\Lambda](\alpha-d\eta)=0$, which shows that  $[\alpha]_B$ has a harmonic representative $\alpha-d\beta$.
The other half of the statement in Condition 1) follows easily from the $s$-Lefschetz property and the commutative diagram (\ref{commutative-diagram}).
\end{proof}
\begin{remark} \label{harmonic-primitive-class} In the second step of the proof of Theorem \ref{Mathieu's-theoremv2}, we actually proved that any primitive cohomology class has a symplectic harmonic representative which is a closed primitive basic form.

\end{remark}
\begin{theorem} (c.f. \cite{Yan96})\label{primitive-decomposition} Let $(M,\mathcal{F},\omega)$ be a co-dimension $2n$ transversely symplectic foliation that satisfies the transverse $s$-Lefschetz property. Then for any $0\leq i\leq s+2$ or $2n-s\leq i\leq 2n$, we have that
\begin{equation}  \label{lef-decomp-identity} H_B^i(M,\R)=\bigoplus_r L^r PH_B^{i-2r}(M,\R)
\end{equation}\end{theorem}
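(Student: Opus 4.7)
The plan is to combine Mathieu's theorem (Theorem \ref{Mathieu's-theoremv2}) with the form-level Lefschetz decomposition of Lemma \ref{yan's-result}(c) to settle the low-degree range $0 \leq i \leq s+2$ directly, and then to transport the result to the range $2n-s \leq i \leq 2n$ via the Lefschetz isomorphism $L^{n-k}: H_B^k \to H_B^{2n-k}$ guaranteed by the transverse $s$-Lefschetz property for $k = 2n-i \leq s$.

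In the low-degree case, surjectivity is immediate: I use Theorem \ref{Mathieu's-theoremv2}(1) to replace $[\alpha]_B \in H_B^i(M,\R)$ by a symplectic harmonic representative $\alpha \in \Omega_{hr}^i(M)$ and then apply Lemma \ref{yan's-result}(c) to write $\alpha = \sum_r L^r \beta_{i-2r}/r!$ with $\beta_{i-2r} \in \mathcal{P}_{bas}^{i-2r}(M)$. Since $L$, $\Lambda$, $H$ preserve the $sl_2$-submodule $\Omega_{hr}^*(M)$, and each primitive component is extractable from $\alpha$ as a polynomial expression in $L$ and $\Lambda$, the forms $\beta_{i-2r}$ remain harmonic, hence $d$-closed, and their cohomology classes lie in $PH_B^{i-2r}(M,\R)$. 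For directness, suppose $\sum_r L^r [\gamma_r]_B = 0$ with $[\gamma_r]_B \in PH_B^{i-2r}(M,\R)$, set $r_* = \max\{r : [\gamma_r]_B \neq 0\}$, and apply $L^{n-i+r_*}$ to the identity. The primitivity condition $L^{n-i+2r+1}[\gamma_r]_B = 0$, combined with the inequality $n-i+r_*+r \geq n-i+2r+1$ (which holds precisely when $r < r_*$), kills every $r < r_*$ term, while maximality kills every $r > r_*$ term; the identity then collapses to $L^{n-(i-2r_*)}[\gamma_{r_*}]_B = 0$ in $H_B^{2n-(i-2r_*)}(M,\R)$. When $r_* \geq 1$ one has $i - 2r_* \leq s$, so the $s$-Lefschetz property turns $L^{n-(i-2r_*)}$ into an isomorphism and forces $[\gamma_{r_*}]_B = 0$; when $r_* = 0$ the original identity already reads $[\gamma_0]_B = 0$. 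Either outcome contradicts the choice of $r_*$.

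For the high-degree range $2n-s \leq i \leq 2n$, I set $k = 2n-i \leq s$, so that $L^{n-k}: H_B^k(M,\R) \to H_B^i(M,\R)$ is an isomorphism. Applying $L^{n-k}$ to the already-established low-degree decomposition $H_B^k = \bigoplus_r L^r PH_B^{k-2r}$ and reindexing via $r' = n-k+r$ (so that $k-2r = i-2r'$) yields $H_B^i = \bigoplus_{r' \geq n-k} L^{r'} PH_B^{i-2r'}$. The remaining summands $L^{r'} PH_B^{i-2r'}$ with $r' < n-k$ vanish automatically: primitivity gives $L^{n-(i-2r')+1}[\gamma]_B = 0$, and the inequality $r' \leq n-k-1 = i-n-1$ is precisely the condition $r' \geq n-i+2r'+1$ needed for $L^{r'}[\gamma]_B$ itself to vanish.

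The principal subtlety I anticipate is verifying that the primitive components of a harmonic form are themselves harmonic (the algebraic input for the low-degree surjectivity step); this reduces to the preservation of $\Omega_{hr}^*(M)$ by $L$, $\Lambda$, $H$, already recorded in the paper preceding Lemma \ref{har-forms}. Granted this $sl_2$-module-theoretic fact, the most delicate conceptual step becomes the isolation of the top primitive component $[\gamma_{r_*}]_B$ via a single precisely calibrated power of $L$ in the directness argument, which neatly avoids appealing to the symplectic $d\delta$-lemma.
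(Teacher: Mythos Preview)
Your proof is correct, but it differs from the paper's in both the surjectivity and directness steps for the low-degree range. For surjectivity, the paper simply invokes the identity $H_B^k(M,\R)=PH_B^k(M,\R)+\text{im}\,L$ (equation (\ref{primitive-decom-step1})), already established in the proof of Theorem~\ref{Mathieu's-theoremv2} by a purely cohomological argument using the surjectivity of the Lefschetz map; it then obtains the full decomposition by an implicit induction on $i$. You instead lift to harmonic representatives via Theorem~\ref{Mathieu's-theoremv2}(1) and apply the form-level Lefschetz decomposition of Lemma~\ref{yan's-result}(c), extracting primitive cohomology classes from the fact that $L,\Lambda,H$ preserve $\Omega_{hr}^*(M)$. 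For directness, the paper only needs the two-term statement $PH_B^i(M,\R)\cap L(H_B^{i-2}(M,\R))=\{0\}$, proved with a single application of the injectivity of $L^{n-i+2}$ on $H_B^{i-2}$; the full directness then follows by the same induction. You instead handle the full multi-term directness in one shot by isolating the top primitive component with a calibrated power $L^{n-i+r_*}$. Your route is more self-contained (it does not appeal back to (\ref{primitive-decom-step1})) and makes the $sl_2$-module structure on $\Omega_{hr}^*(M)$ do more work; the paper's route is shorter and stays entirely at the level of cohomology. In the high-degree range both arguments transport via the Lefschetz isomorphism $L^{n-k}$; your explicit check that the summands $L^{r'}PH_B^{i-2r'}$ with $r'<i-n$ vanish is a detail the paper leaves to the reader.
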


\begin{proof} We first claim that  (\ref{lef-decomp-identity}) holds for $0\leq i\leq s+2$. In view of (\ref{primitive-decom-step1}),
it suffices to show that $PH_B^i(M,\R)\cap L(H^{i-2}_B(M,\R))=\{0\}$.
To see this, assume that $[\alpha]_B=L([\beta]_B)\in PH_B^i(M,\R)\cap L(H^{i-2}_B(M,\R))$. Then by primitivity, $L^{n-i+1}([\alpha]_B)=L^{n-i+2}([\beta]_B)=0$.
However, since by assumption the Lefschetz map $L^{n-i+2}: H^{i-2}_B(M,\R)\rightarrow H^{2n-i+2}(M,\R)$ is an isomorphism, it follows that $[\beta]_B=0$. This implies that $[\alpha]_B=L([\beta]_B)=0$, from which our claim follows. Since for each $0\leq k\leq s$, the Lefschetz map (\ref{foliation-Lefschetz-map}) is an isomorphism, (\ref{lef-decomp-identity}) also holds for any $2n-s\leq i\leq 2n$.

\end{proof}

\begin{definition} Let $(M,\mathcal{F},\omega)$ be a transversely symplectic foliation of co-dimension $2n$, and let $0\leq s \leq n - 1$. We say that
$(M,\mathcal{F},\omega)$ satisfies the symplectic $d\delta$-lemma up to degree $s$ if
\begin{equation}\label{s-ddelta-lemma}\begin{split} & \text{Im}\,d \cap\text{ker}\delta=\text{Im}\, d\delta=\text{Im}\, \delta\cap\text{ker}\,d ,\,\text{on }\,\Omega^{k}_{bas}(M)\,\,\,\forall\,k\leq s,\\&\text{Im}d \cap\text{ker}\delta=\text{Im}\, d\delta, \,\,\,\text{on}\, \Omega^{s+1}_{bas}(M).\end{split}
\end{equation}

\end{definition}

\begin{lemma} \label{decomp-1}Let $(M,\mathcal{F},\omega)$ be a transversely symplectic foliation of co-dimension $2n$ that satisfies the transverse $s$-Lefschetz property, and let $0\leq s \leq n - 1$. Suppose that  $\alpha\in \Omega_{bas}^k(M)$, where  $0\leq k\leq s+2$ or $2n-s\leq k\leq 2n$. Consider the Lefschetz decomposition of $\alpha$ as in Equation (\ref{lefschetz-decompose-forms}). If $\alpha$ is both harmonic and $d$-exact, then each $\alpha_r$ is $d$-exact.
\end{lemma}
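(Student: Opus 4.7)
The plan is to combine two ingredients: the symplectic harmonic basic forms assemble into an $sl_2$-submodule of $\Omega_{bas}(M)$, and when $k$ lies in the stipulated range the basic cohomology admits a genuine Lefschetz direct-sum decomposition by Theorem~\ref{primitive-decomposition}. Once both are in place, the lemma reduces to uniqueness of the primitive decomposition.

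\textbf{Step 1.} I will first show that each primitive piece $\beta_{k-2r}$ in the Lefschetz decomposition of the harmonic form $\alpha$ is itself symplectic harmonic. As recorded immediately before Lemma~\ref{har-forms}, the operators $L$, $\Lambda$, and $H$ all preserve $\Omega_{hr}^*(M)$, so the harmonic subspace is an $sl_2$-submodule of $\Omega_{bas}(M)$ of finite $H$-type. Applying the same structural $sl_2$ result that underlies Lemma~\ref{yan's-result}(c) within this submodule produces a decomposition $\alpha = \sum_r \tfrac{L^r}{r!}\widetilde{\beta}_{k-2r}$ with each $\widetilde{\beta}_{k-2r} \in \Omega_{hr}^{k-2r}(M)$ primitive. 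Since each $\widetilde{\beta}_{k-2r}$ is in particular a primitive \emph{basic} form, this is also a Lefschetz decomposition of $\alpha$ in $\Omega_{bas}(M)$. By the uniqueness clause of Lemma~\ref{yan's-result}(c) applied in $\Omega_{bas}(M)$, the two decompositions coincide, so each $\beta_{k-2r}$ is symplectic harmonic, hence closed, and represents a class $[\beta_{k-2r}]_B \in PH_B^{k-2r}(M,\R)$ by primitivity.

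\textbf{Step 2.} Passing to basic cohomology in the form-level decomposition yields
\[
[\alpha]_B \;=\; \sum_r \frac{1}{r!}\,L^r[\beta_{k-2r}]_B \;\in\; H_B^k(M,\R),
\]
a Lefschetz decomposition of $[\alpha]_B$. The hypothesis that $\alpha$ is $d$-exact gives $[\alpha]_B = 0$. Because $k$ lies in the range $0\leq k\leq s+2$ or $2n-s\leq k\leq 2n$, Theorem~\ref{primitive-decomposition} guarantees that the Lefschetz decomposition of $H_B^k(M,\R)$ is a genuine direct sum. Uniqueness then forces $L^r[\beta_{k-2r}]_B = 0$ in $H_B^k(M,\R)$ for every $r$, which is exactly the statement that each $\alpha_r = \tfrac{L^r}{r!}\beta_{k-2r}$ is $d$-exact.

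\textbf{Expected obstacle.} The delicate point is Step 1. Mere closure of $\Omega_{hr}^*(M)$ under $L$ and $\Lambda$ does not by itself give a harmonic primitive decomposition of a harmonic form; one needs the $sl_2$-representation-theoretic fact that the primitive components depend functorially on $\alpha$ and hence stay inside any $sl_2$-submodule containing it. Invoking the uniqueness of the Lefschetz decomposition in the ambient module $\Omega_{bas}(M)$ is the cleanest way to package this, and it avoids writing down explicit polynomial expressions for the primitive projectors. After Step 1, Step 2 is a formal application of the cohomology-level Lefschetz decomposition made available by the transverse $s$-Lefschetz hypothesis.
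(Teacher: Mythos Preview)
Your approach is correct and genuinely different from the paper's. The paper proceeds by explicit downward induction when $0\le k\le s+2$: applying $L^{n-k+q}$ to a truncation of the decomposition isolates $L^{n-(k-2q)}\alpha_q$ as a $d$-exact form, and the transverse $s$-Lefschetz isomorphism in degree $k-2q\le s$ then forces the primitive piece $\alpha_q$ itself to be exact; the range $2n-s\le k\le 2n$ is handled separately by writing $\alpha = L^{n-p}\beta$ for a harmonic $\beta$ of degree $p=2n-k\le s$ and invoking the low-degree case for $\beta$. Your route instead uses the $sl_2$-submodule structure of $\Omega_{hr}^*(M)$ to see that the primitive components are already closed, and then appeals to the cohomology-level direct sum of Theorem~\ref{primitive-decomposition} uniformly in both degree ranges. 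This is more conceptual and avoids the induction, at the price of depending on Theorem~\ref{primitive-decomposition} (which the paper proves independently of this lemma, so there is no circularity).

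One point to tighten: in the paper's notation (see its proof and the application in Proposition~\ref{ddelta-relation}) $\alpha_r$ denotes the \emph{primitive} component, not the full summand $\tfrac{L^r}{r!}\beta_{k-2r}$. Your Step~2 literally yields $L^r[\beta_{k-2r}]_B=0$, i.e.\ exactness of the summand. To upgrade to $[\beta_{k-2r}]_B=0$, observe that whenever $r\ge 1$ and $k\le s+2$, or for all relevant $r$ when $k\ge 2n-s$ (since then $k-2r\le 2n-k\le s$), one has $k-2r\le s$; hence the $s$-Lefschetz isomorphism $L^{n-(k-2r)}$ factors through $L^r$, making $L^r$ injective on $H_B^{k-2r}(M,\R)$. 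For $r=0$ there is nothing to do.
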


\begin{proof} \textbf{Step 1}. We first prove the case when $0\leq k\leq s+2$. Write \begin{equation}\label{L-decomposition-2} \alpha=\displaystyle\sum_{r\geq  0} L^r\alpha_{r},\end{equation}
where $\alpha_{r}$ is a closed primitive basic form of degree $k-2r$. We will use downward induction on $r$. Note that when $r>\frac{k}{2}$, $\alpha_{r}=0$ is $d$-exact.
Let us assume by induction that $\alpha_r$ is $d$-exact for $r>q$, where $q>0$,  and conclude that $\alpha_q$ is $d$-exact. By the induction hypothesis,
\[ \alpha'=\alpha-\displaystyle\sum_{r> q} L^r\alpha_r=\displaystyle\sum_{r\leq q}L^r\alpha_r \] is $d$-exact.
Applying $L^{n-k+q}$ we get from the above identity that
\[ L^{n-k+q}\alpha'=L^{n-k+2q}\alpha_q+ \displaystyle\sum_{r< q}L^{n-(k-2r)+(q-r)}\alpha_r.\]
Since $\alpha_r$ is a primitive form with degree $k-2r$, we have that \[L^{n-(k-2r)+(q-r)}\alpha_r=0,\,\,\,\,\forall\,r<q.\] It follows
that
\begin{equation}\label{HLP-eq1} L^{n-k+q}\alpha'=L^{n-k+2q}\alpha_q.\end{equation}
The left hand side of (\ref{HLP-eq1}) is $d$-exact. Since $k-2q\leq s$, it follows from the transverse $s$-Lefschetz property that $\alpha_q$ is exact as well. This proves that for each $r>0$, $\alpha_r$ is $d$-exact.
Finally, if $\alpha_0$ appears in the righthand side of Equation (\ref{L-decomposition-2}), then by the above argument,
\[ \alpha_0=\alpha-\displaystyle \sum_{r>0}L^{r}\alpha_r\]
must be exact as well. This finishes the proof of the case when $0\leq k\leq s+2$.


\textbf{Step 2.} We prove the case when $2n-s\leq k\leq 2n$.  Set $p=2n-k$. Since $\alpha \in\Omega_{bas}^{2n-p}(M)$ is harmonic, by Lemma \ref{har-forms}, there exists a harmonic $p$-form $\beta \in \Omega_{bas}^p(M)$ such that
$\alpha=L^{n-p} \beta$, where $0\leq p\leq s$. Lefschetz decompose $\beta$ as follows.
\[ \beta =\displaystyle \sum_{r\geq \text{max}(\frac{p-n}{2}, 0)}L^r\beta_{r},\]
where $\beta_{r}\in \Omega_{bas}^{p-2r}(M)$. Since $M$ satisfies the transverse $s$-Lefschetz property, $\beta$ must be a $d$-exact basic form in $\Omega_{bas}(M)$. By our work in Step 1, each $\beta_r$ must be $d$-exact.  However, we have
\[\alpha=\displaystyle \sum_{r\geq \text{max}(\frac{k-n}{2}, 0)}L^{n-p+r}\beta_{r}.\]
Since the Lefschetz decomposition of $\alpha$ is unique, this completes the proof of Lemma \ref{decomp-1}.

\end{proof}

\begin{proposition} \label{ddelta-relation}Suppose that $(M,\mathcal{F},\omega)$ is a transversely symplectic
foliation of co-dimension $2n$  that satisfies the transverse $s$-Lefschetz property, where $0\leq s \leq n - 1$.
Then we have that
\begin{itemize}

\item [a)] If $\alpha\in \Omega_{bas}(M)$ is both $d$-exact and $\delta$-closed, and if either $0\leq \text{deg}\, \alpha\leq s+2$ or $2n-s\leq \text{deg}\, \alpha\leq 2n$, then $\alpha$ is $\delta$-exact.
\item[b)] If $\alpha\in \Omega_{bas}(M)$ is both $\delta$-exact and $d$-closed, and if either $0\leq \text{deg}\, \alpha\leq s$ or $2n-s-2\leq \text{deg} \, \alpha\leq 2n$, then $\alpha$ is $d$-exact.
\item[c)]\[\,\,\,\text{on}\,\left( \bigoplus_{k=0}^s\Omega^k(M)\right)\oplus\left(\bigoplus_{k=2n-s}^{2n}\Omega^k(M)\right), \,\,\,\text{Im}\, d\cap \text{ker}\, \delta=\text{ker}\, d\cap \text{Im}\, \delta. \]

\end{itemize}

\end{proposition}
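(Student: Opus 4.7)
The plan is to prove (a) directly using the Lefschetz decomposition and the Weil identity, then derive (b) from (a) via symplectic Hodge star duality, and obtain (c) as an immediate corollary of (a) and (b). The main work is concentrated in (a); the other two parts are formal consequences.

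For (a), let $\alpha \in \Omega^k_{bas}(M)$ be $d$-exact and $\delta$-closed with $k \in [0,s+2]\cup[2n-s,2n]$. Since any $d$-exact form is automatically $d$-closed, $\alpha$ is harmonic, so Lemma \ref{decomp-1} applies: in the Lefschetz decomposition $\alpha = \sum_{r} L^r\alpha_r$, each primitive component $\alpha_r$ of degree $k-2r$ is $d$-exact, say $\alpha_r = d\eta_r$ for some basic $\eta_r$. The Weil identity (Lemma \ref{Weil-identity}) gives
\[
\star(L^r\alpha_r) \;=\; c_r\, L^{n-k+r}\alpha_r
\]
for a nonzero constant $c_r$, and since $L$ commutes with $d$, the right-hand side equals $c_r\, d(L^{n-k+r}\eta_r)$. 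Setting $\zeta_r := c_r\, L^{n-k+r}\eta_r$, we obtain $\star(L^r\alpha_r) = d\zeta_r$. Applying $\star$ once more and using $\star^2 = \text{id}$ together with $\delta = \pm \star d\star$ identifies $L^r\alpha_r$ with $\pm\delta(\star\zeta_r)$, so each summand is $\delta$-exact, and hence so is $\alpha$.

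For (b), suppose $\alpha = \delta\gamma$ is $d$-closed in the stated degree range. Then $\star\alpha = \pm d(\star\gamma)$ is $d$-exact, and $\delta(\star\alpha) = \pm\star(d\alpha) = 0$, so $\star\alpha$ is $d$-exact and $\delta$-closed. Its degree is $2n-k$, and the range $k \in [0,s]\cup[2n-s-2,2n]$ translates exactly to $2n-k \in [0,s+2]\cup[2n-s,2n]$, the hypothesis of (a). Applying (a) to $\star\alpha$ yields $\star\alpha = \delta\mu$, and applying $\star$ once more shows that $\alpha$ is $d$-exact. For (c), the stricter range $k\in[0,s]\cup[2n-s,2n]$ lies inside the ranges required by both (a) and (b); given $\alpha\in\text{Im}\,d\cap\ker\delta$, part (a) yields $\alpha\in\text{Im}\,\delta$, and since $\alpha$ is automatically $d$-closed, $\alpha\in\ker d\cap\text{Im}\,\delta$; the reverse inclusion follows identically from (b).

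The main obstacle is the first step of (a), namely passing from $d$-exactness of $\alpha$ to $d$-exactness of each primitive summand $\alpha_r$. This is supplied by Lemma \ref{decomp-1}, but it is precisely here that the transverse $s$-Lefschetz hypothesis enters, through the degree restrictions imposed in that lemma. Once each $\alpha_r$ is known to be individually $d$-exact, converting from $d$-exactness to $\delta$-exactness via the Weil identity is essentially formal.
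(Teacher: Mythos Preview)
Your proof is correct and follows essentially the same route as the paper: for (a) you Lefschetz decompose $\alpha$, invoke Lemma~\ref{decomp-1} to get $d$-exactness of each primitive component, and then use the Weil identity to conclude that $\star\alpha$ is $d$-exact; (b) and (c) are then obtained via Hodge star duality exactly as in the paper. The only difference is cosmetic---you write out the primitives $\eta_r$ and $\zeta_r$ explicitly, whereas the paper simply notes that $\star\alpha = \sum_r C_r L^{n-k+r}\alpha_r$ is $d$-exact and stops there.
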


\begin{proof} We first prove a). Let $\alpha\in \Omega_{bas}^k(M)$ be both $d$-exact and $\delta$-closed, where $0\leq k\leq s+2$ or
$2n-s\leq k\leq 2n$. Lefschetz decompose $\alpha$ into
\[\alpha=\displaystyle\sum_{r\geq  0} L^r\alpha_{r},\]
where $\alpha_{r}$ is a closed primitive basic form of degree $k-2r$. Without loss of generality, we may assume that
$r\leq n-(k-2r)$, for otherwise $L^r\alpha_r=0$ due to the primitivity of $\alpha_r$.
Since $\alpha$ is both $d$-exact and $\delta$-closed, by Lemma \ref{decomp-1}, each term $\alpha_r$ is $d$-exact.
Now by Lemma \ref{Weil-identity},
\[\star \alpha=\displaystyle\sum_{r\geq  0}C_r L^{n-k+r}\alpha_{r},\] where $C_r$ is a constant that depends on $k$ and $r$.
It follows $\star \alpha$ is $d$-exact. Therefore $\alpha$ must be $\delta$-exact.

Next we prove b).  Suppose that $\alpha\in \Omega_{bas}(M)$ is both $\delta$-exact and $d$-closed, and that either $0\leq \text{deg}\, \alpha\leq s$ or $2n-s-2\leq \text{deg} \, \alpha\leq 2n$. Then $\star \alpha$ is both $d$-exact and $\delta$-closed. Moreover, either $0\leq \text{deg}\, \star\alpha\leq s+2$ or $2n-s\leq \text{deg}\, \star \alpha\leq 2n$. It follows from a) that $\star\alpha$ is $\delta$-exact.
Therefore $\alpha$ itself is $d$-exact. Finally, we note that c) is an immediate consequence of a) and b).

\end{proof}

Suppose that $\tau\in \Omega_{bas}^k(M)$ has the property that $d\tau$ is harmonic, where $0\leq k\leq s+2$ or $2n-s\leq k\leq 2n$. Lefschetz decompose $\tau$ as follows.
\begin{equation}\label{summand-1} \tau=\sum_{r\geq (p-n)_+}L^r\alpha_r,\end{equation}
where $\alpha_r$'s are primitive forms. Then
\begin{equation}\label{summand-2}d\tau=\sum_{r\geq (p-n)_+}L^rd\alpha_r.\end{equation}
 The argument given in the proof of \cite[Lemma 3.2.4, 3.2.5, Cor. 3.2.6]{He10} extends verbatim here to give us the following strengthening of Proposition \ref{ddelta-relation}.

\begin{lemma}\label{exact1} Suppose that $(M,\mathcal{F},\omega)$ is a transversely symplectic foliation of co-dimension $2n$ that satisfies the transverse $s$-Lefschetz property, where $0\leq s \leq n - 1$. Then in Equation (\ref{summand-2}), each term $d\alpha_r=\beta_r+\omega\wedge \beta_r'$,  where $\beta_r$ and $\beta_r'$ are $d$-exact primitive basic forms.  As a result, each summand $L^r d\alpha_r$ in Equation (\ref{summand-2})
is $\delta$-exact. 
\end{lemma}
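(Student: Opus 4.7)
The plan is to make explicit the primitive (Lefschetz) decomposition of $d\alpha_r$, identify its lower primitive piece with a constant multiple of $\delta\alpha_r$ via the $sl_2$ commutator identities, and then exploit the harmonicity of $d\tau$ together with the transverse $s$-Lefschetz hypothesis to deduce $d$-exactness of each primitive component. Since $\alpha_r$ is primitive of degree $p := k-2r$ and $d$ commutes with $L$ (because $d\omega = 0$), we have $L^{n-p+1} d\alpha_r = d L^{n-p+1}\alpha_r = 0$. Writing the Lefschetz decomposition $d\alpha_r = \sum_{j \ge 0} L^j \nu_j$ with $\nu_j$ primitive of degree $p+1-2j$ and applying $L^{n-p+1}$, uniqueness of the Lefschetz decomposition combined with Lemma \ref{yan's-result} forces $\nu_j = 0$ for all $j \ge 2$. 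Hence $d\alpha_r = \gamma_0^{(r)} + L\gamma_1^{(r)}$, with $\gamma_0^{(r)}$ and $\gamma_1^{(r)}$ primitive basic forms of degrees $p+1$ and $p-1$ respectively.

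Next, applying $\Lambda$ to $d\alpha_r = \gamma_0^{(r)} + L\gamma_1^{(r)}$ and using $[d,\Lambda] = \delta$, $\Lambda L = L\Lambda + H$, together with $\Lambda \alpha_r = \Lambda \gamma_0^{(r)} = \Lambda \gamma_1^{(r)} = 0$, yields $(n-p+1)\gamma_1^{(r)} = -\delta\alpha_r$; in particular $\gamma_1^{(r)}$ is $\delta$-exact. To see it is also $d$-closed, observe that $[d\delta,\Lambda] = 0$ (Lemma \ref{commutator}) together with $\Lambda\alpha_r = 0$ imply that $\delta d\alpha_r = -d\delta\alpha_r$ is primitive of degree $p$. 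The harmonicity of $d\tau$ gives $\delta d\tau = 0$; using $\delta L^r = L^r \delta + r L^{r-1} d$ together with $d^2 = 0$, this simplifies to $\sum_r L^r \delta d\alpha_r = 0$. Since the $\delta d\alpha_r$ are primitive of pairwise distinct degrees and $L^r$ is injective on the relevant primitive spaces in the effective range, uniqueness of the Lefschetz decomposition forces $\delta d\alpha_r = 0$ for every $r$. Combined with the anti-commutation $d\delta = -\delta d$, this yields $d\gamma_1^{(r)} = \tfrac{1}{n-p+1}\delta d\alpha_r = 0$, and moreover shows each $d\alpha_r$ is harmonic.

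Now $\gamma_1^{(r)}$ is a harmonic primitive form that is $\delta$-exact. Proposition \ref{ddelta-relation}(b) directly yields its $d$-exactness whenever its degree $k-2r-1$ lies in $[0,s] \cup [2n-s-2, 2n]$. For the boundary cases (e.g.\ $k = s+2$ with $r=0$, where $\deg \gamma_1^{(r)} = s+1$ sits just outside this range), we follow the extended inductive argument of \cite[Lemmas 3.2.4--3.2.5]{He10}: the newly established harmonicity of $d\alpha_r$ combined with the two-term primitive decomposition permits us to mimic the proof of Lemma \ref{decomp-1} for the specific form $d\alpha_r$, combining the $s$-Lefschetz isomorphism on the dual-degree side with the two-step downward induction to conclude $d$-exactness of $\gamma_1^{(r)}$. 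Once $\gamma_1^{(r)} = d\eta$ is established, $\gamma_0^{(r)} = d\alpha_r - L\gamma_1^{(r)} = d(\alpha_r - \omega \wedge \eta)$ is $d$-exact as well. Setting $\beta_r := \gamma_0^{(r)}$ and $\beta_r' := \gamma_1^{(r)}$ yields the desired decomposition. Finally, $L^r d\alpha_r = L^r \gamma_0^{(r)} + L^{r+1}\gamma_1^{(r)}$ is a sum of $L$-powers applied to $d$-exact primitive forms; via the Weil identity (Lemma \ref{Weil-identity}) and the formula $\star \delta = \pm d\star$, applying $\star$ converts $L^r d\alpha_r$ to a $d$-exact form, and hence $L^r d\alpha_r$ is $\delta$-exact as claimed.

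The main obstacle is establishing $d$-exactness of $\gamma_1^{(r)}$ in the boundary-degree cases: Proposition \ref{ddelta-relation}(b) is effective only on $[0,s] \cup [2n-s-2, 2n]$, but the primitive degrees arising in the Lefschetz decomposition of $d\alpha_r$ can fall just outside that range. Bridging this gap is the technical heart of the argument and requires the delicate induction of He adapted to the two-term primitive decomposition of $d\alpha_r$, where the harmonicity of $d\alpha_r$ obtained in the previous step provides the crucial extra input not available in a raw application of Lemma \ref{decomp-1}.
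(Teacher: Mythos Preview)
The paper gives no self-contained proof here; it merely asserts that the argument of \cite[Lemmas 3.2.4--3.2.5, Cor.\ 3.2.6]{He10} extends verbatim. Your proposal reconstructs that argument, and the explicit steps---the two-term primitive decomposition $d\alpha_r=\gamma_0^{(r)}+L\gamma_1^{(r)}$, the identification $(n-p+1)\gamma_1^{(r)}=-\delta\alpha_r$, the harmonicity of each $d\alpha_r$ via the Lefschetz decomposition of $\delta d\tau=0$, and the final $\delta$-exactness via the Weil identity---are all correct and are the expected ingredients.

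The one place where your sketch understates the difficulty is the upper range $2n-s\le k\le 2n$. You treat the failure of Proposition~\ref{ddelta-relation}(b) as a ``boundary case'' (your example is $k=s+2$, $r=0$, where $\deg\gamma_1^{(0)}=s+1$). In the lower range that is indeed the only bad case, and your indicated workaround---apply $L^{n-p}$ to $d\alpha_r$ to isolate $L^{n-p+1}\gamma_1^{(r)}$, then use injectivity of $L^{n-p+1}$ on $H^{p-1}_B$---works whenever $p-1\le s$, i.e.\ for all $(k,r)$ with $k\le s+2$ except that single pair. But for $k\ge 2n-s$ the primitive degree $p=k-2r$ ranges over all of $[0,n]$, so $\deg\gamma_1^{(r)}=p-1$ lies well outside $[0,s]\cup[2n-s-2,2n]$ in general, not just at a boundary; neither Proposition~\ref{ddelta-relation}(b) nor your $L^{n-p}$-trick applies. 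The clean fix, already visible in Step~2 of Lemma~\ref{decomp-1}, is to write $\tau=L^{\,n-q}\sigma$ with $q=2n-k\le s$ (using that $L^{n-q}\colon\Omega^q_{bas}\to\Omega^{2n-q}_{bas}$ is an isomorphism), check that $d\sigma$ is again harmonic (since $\delta d\tau=L^{n-q}\delta d\sigma$ and $L^{n-q}$ is injective on degree~$q$), and then transport your lower-range conclusion for $\sigma$ back to $\tau$ via the identification $\alpha_{n-q+r}(\tau)=\alpha_r(\sigma)$. With that reduction your argument is complete over the range actually used in Theorem~\ref{weak-ddelta-lemma}.
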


We are ready to prove the symplectic $d\delta$-lemma on basic differential forms.

\begin{theorem}\label{weak-ddelta-lemma} Let $(M,\mathcal{F},\omega)$ be a transversely symplectic foliation foliation of co-dimension $2n$ on a connected manifold $M$, and let $0\leq s \leq n - 1$.  Then the following statements are equivalent:
\begin{itemize}\item [1)] $M$ has the transverse $s$-Lefschetz property.
\item[2)] $M$ satisfies the $d\delta$-lemma up to degree $s$.
\item[3)]  The identities (\ref{s-ddelta-lemma}) hold on $\Omega_{bas}^{\geq 2n-s}(M)$, and $\text{Im} \delta\cap \text{ker} d = \text{Im} d\delta$ holds on
$\Omega_{bas}^{2n-s-1}(M)$.\end{itemize}

\end{theorem}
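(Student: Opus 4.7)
The plan is to establish the cyclic chain $(1)\Rightarrow(2)\Rightarrow(3)\Rightarrow(1)$, of which only $(1)\Rightarrow(2)$ requires substantive new work; $(2)\Rightarrow(3)$ will be a formal $\star$-duality argument, and $(3)\Rightarrow(1)$ will be the standard harmonic-representative construction combined with Theorem~\ref{Mathieu's-theoremv2}. For $(1)\Rightarrow(2)$, I would start with $\alpha \in \Omega_{bas}^k(M)$ that is $d$-exact and $\delta$-closed in the specified degree range. Because $\alpha$ is automatically $d$-closed (being $d$-exact), it is harmonic, so writing $\alpha=d\tau$ and Lefschetz-decomposing $\tau=\sum_{r}L^{r}\tau_{r}$ with $\tau_{r}$ primitive, Lemma~\ref{exact1} gives $d\tau_{r}=\beta_{r}+\omega\wedge\beta_{r}'$ with $\beta_{r},\beta_{r}'$ $d$-exact primitive and each summand $L^{r}d\tau_{r}$ already $\delta$-exact. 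The remaining task is to promote $\delta$-exactness to $d\delta$-exactness, which I would do by combining the commutator relations $[d,\Lambda]=\delta$, $[\delta,L]=d$ and $[d\delta,L]=[d\delta,\Lambda]=0$ with Weil's identity (Lemma~\ref{Weil-identity}) to rewrite the $\star$ of each exact primitive piece as an exact Lefschetz power, and thereby exhibit the $\delta$-primitive of $L^{r}d\tau_{r}$ as itself $d$-exact; summing yields $\alpha\in\text{Im}\,d\delta$. The companion identity $\text{Im}\,\delta\cap\text{ker}\,d=\text{Im}\,d\delta$ on $\Omega_{bas}^{\leq s}$ follows either by the same argument with the roles of $d$ and $\delta$ swapped, or by combining what has just been shown with Proposition~\ref{ddelta-relation}(c).

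For $(2)\Rightarrow(3)$, the symplectic Hodge star $\star\colon\Omega_{bas}^{k}(M)\to\Omega_{bas}^{2n-k}(M)$ satisfies $\star^{2}=\mathrm{id}$ and intertwines $d$ with $\pm\delta$ while preserving $d\delta$ up to sign. Hence $\text{Im}\,d\cap\text{ker}\,\delta$ in degree $k$ is carried bijectively to $\text{Im}\,\delta\cap\text{ker}\,d$ in degree $2n-k$, and $\text{Im}\,d\delta$ to $\text{Im}\,d\delta$. The identities of $(2)$ on degrees $k\leq s$ translate verbatim into identities on degrees $2n-k\geq 2n-s$, giving the first clause of $(3)$, while $\text{Im}\,d\cap\text{ker}\,\delta=\text{Im}\,d\delta$ on $\Omega_{bas}^{s+1}$ translates to $\text{Im}\,\delta\cap\text{ker}\,d=\text{Im}\,d\delta$ on $\Omega_{bas}^{2n-s-1}$, giving the second clause.

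For $(3)\Rightarrow(1)$, by Theorem~\ref{Mathieu's-theoremv2} it suffices to produce a symplectic-harmonic representative for every class in $H_{B}^{j}(M,\R)$ with $2n-s\leq j\leq 2n$. Given a $d$-closed $\alpha\in\Omega_{bas}^{j}(M)$, the form $\delta\alpha$ has degree $j-1\geq 2n-s-1$, is manifestly $\delta$-exact, and is $d$-closed because $d\delta\alpha=-\delta d\alpha=0$. Hypothesis $(3)$ then supplies $\xi$ with $\delta\alpha=d\delta\xi$; setting $\beta:=\alpha+d\xi$ we obtain $d\beta=0$ and $\delta\beta=\delta\alpha+\delta d\xi=d\delta\xi-d\delta\xi=0$ (using $\delta d=-d\delta$), so $\beta$ is a harmonic representative cohomologous to $\alpha$.

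The main obstacle will be the upgrade from $\delta$-exactness to $d\delta$-exactness in $(1)\Rightarrow(2)$: Lemma~\ref{exact1} supplies the structural information about $d\tau_{r}$, but extracting a $\delta$-primitive that is itself $d$-exact requires a careful inductive argument on the Lefschetz index $r$, exploiting the $s$-Lefschetz isomorphism (Lemma~\ref{yan's-result}(b) applied to primitive cohomology) precisely in the range of primitive degrees appearing in the decomposition of $\alpha$. Once this bookkeeping is in hand, the remaining implications are essentially formal.
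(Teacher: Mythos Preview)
Your argument for $(3)\Rightarrow(1)$ has a genuine gap. You write that by Theorem~\ref{Mathieu's-theoremv2} it suffices to produce harmonic representatives for all classes in $H_B^j(M,\R)$ with $2n-s\leq j\leq 2n$. But Theorem~\ref{Mathieu's-theoremv2} only tells you that the existence of such harmonic representatives is equivalent to the \emph{surjectivity} of the Lefschetz maps $L^{n-k}\colon H^k_B\to H^{2n-k}_B$ for $0\leq k\leq s$. The transverse $s$-Lefschetz property (Definition~\ref{weak-lefschetz-property}) requires these maps to be \emph{isomorphisms}, and as the remark following that definition points out, Poincar\'e duality can fail for basic cohomology, so surjectivity does not come for free from a dimension count. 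The paper handles injectivity by a separate argument: if $L^{n-k}[\alpha]_B=0$, choose a harmonic representative $\alpha$, note that $L^{n-k}\alpha$ is $d$-exact and $\delta$-closed, invoke hypothesis (3) to write $L^{n-k}\alpha=d\delta\gamma$, then use that $L^{n-k}$ is an isomorphism on \emph{forms} (Lemma~\ref{yan's-result}(b)) together with $[d\delta,L]=0$ to pull this back to $\alpha=d\delta\eta$. You need to add this half of the argument.

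Your plan for $(1)\Rightarrow(2)$ is also problematic, though less clearly wrong than simply incomplete. You propose to upgrade the $\delta$-exactness of each $L^r d\tau_r$ to $d\delta$-exactness via an induction on the Lefschetz index $r$, but each $L^r d\tau_r$ still has the same degree $k$ as $\alpha$, so there is no obvious reduction to a previously-established case. The paper's route is different: it inducts on the \emph{degree} $k$, computing $\delta(L^r\tau_r)$ (which has degree $k-2$), showing via the Weil identity and Lemma~\ref{exact1} that this is simultaneously $d$-exact and $\delta$-exact, and then invoking the inductive hypothesis to write $\delta(L^r\tau_r)=\delta d\sigma$. Setting $\gamma=L^r\tau_r-d\sigma$ gives $\delta\gamma=0$, and then the paper makes essential use of Theorem~\ref{Mathieu's-theoremv2} (existence of harmonic representatives in the appropriate degree) to decompose $\gamma$ as a harmonic form plus a $\delta$-exact form, whence $\alpha=d\gamma\in\text{Im}\,d\delta$. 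You make no mention of invoking Theorem~\ref{Mathieu's-theoremv2} inside this implication, and without that ingredient I do not see how your promotion step goes through. You also omit the base case $k=1$, which is where the connectedness hypothesis on $M$ is actually used (to get $H_\delta(\Omega^0_{bas})\cong H_B^{2n}\cong H_B^0\cong\R$).

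Your $(2)\Rightarrow(3)$ via $\star$-duality matches the paper and is fine.
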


\begin{proof}
 \textbf{Step 1}. We show that $1)$ implies $2)$.  Since by Proposition \ref{ddelta-relation},
\[\text{Im} d\cap \text{ker} \delta=\text{ker}\, \delta\cap\text{im}d\,\,\,\text{ on}\,\,\bigoplus_{k=0}^s\Omega_{bas}^k(M),\] it suffices to show that  for any $0\leq k \leq s+1$, if a basic $k$-form $\alpha$ is both $d$-exact and $\delta$-closed, then $\alpha \in \text{im}\,d\delta$. 

We will proceed by using induction on the degree $k$ of $\alpha$. The case of degree 0 is trivial.
Assume that $\alpha\in\Omega^{1}_{\textmd{bas}}(M)$ satisfies $\alpha=df$ for some basic function $f$ and $\delta\alpha=0$.
Note that $f$ represents a homology class in $H_{\delta}(\Omega_{bas}^0(M),\R)$. However, it follows from  Lemma \ref{homology-cohomology} and the transverse $s$-Lefschetz property, that 
\[H_{\delta}(\Omega^0_{bas}(M),\R)\cong H^{2n}_B(M,\R)\cong H^0_B(M,\R)\cong \R.\] Here the last equality holds since $M$ is  connected.  Therefore there exists a constant $a$ and a basic one form $\gamma$ such that $f=a+\delta \gamma$.
It follows that $\alpha=d(a+\delta \gamma)=d\delta \gamma $.

Assume by induction that this is true for basic forms up to degree $<k$, where $k\leq s+1$. Now suppose that a $k$-form $\alpha \in \text{im}\,d \cap\text{ker}\delta$. By assumption, $\alpha= d\beta$ for some $\beta\in \Omega^{k-1}(M)$. Lefschetz decompose $\beta$ as
\[ \beta=\displaystyle \sum_{r\geq 0} L^r\beta_r,\] where $\beta_r\in \Omega_{bas}^{k-1-2r}(M)$ are primitive basic forms. Then
\[ d\beta=\displaystyle \sum_{r\geq 0} L^r d\beta_r.\]
By Lemma \ref{exact1}, $d\beta_r=v_r+Lv_r'$, where $v_r$ and $v_r'$ are exact primitive basic forms of degree $k-2r+1$ and $k-2r-1$ respectively.  As a result, each term $L^rd\beta_r$ is both $d$-exact and $\delta$-closed. So we need only to prove that
$d\beta \in \text{im}\, d\delta$ in the case that $\beta=L^r v$, where $v\in \Omega_{bas}^{k-2r-1}(M)$ is a primitive form, and $d\beta$ is $\delta$-closed.

By Lemma \ref{Weil-identity}, we get
\[ \star L^r v=CL^{n-k+r-1} v,\] where $C$ is a constant.

By Lemma \ref{exact1}, \[ d\star L^r v=C \left(L^{n-k+r+1} u_1+L^{n-k+r+2} u_2\right)\]
where $u_1$ and $u_2$ are both primitive forms, and are both $d$-exact.
Apply $\star$ on both sides of the equation again, we get
\[ \delta L^r v=C_1L^{r-1} u_1+C_2L^r u_2.\]
Hence $\delta L^rv $ is both $\delta$-exact and $d$-exact. By the induction hypothesis, there exists
$\sigma \in \Omega_{bas}^{k-2}(M)$ such that $\delta L^rv=\delta d\sigma$.

Now set $\gamma= L^rv- d\sigma$. Then $\alpha= d\gamma$ and $\delta \gamma=0$. Sine $\gamma$ is $\delta$-closed, $\star \gamma$ is a closed basic form of degree $2n-(k-1)$. Since $M$ satisfies the transverse $s$-Lefschetz property, by Theorem \ref{Mathieu's-theoremv2} there exists a harmonic form $\eta$ such that $\star \gamma-\eta$ is $d$-exact. Consequently, $\gamma-\star \eta=\star(\star \gamma-\eta)=\delta \mu$ for some $\mu\in \Omega^k_{bas}(M)$. Note that $\star \eta$ is also harmonic. It follows that
\[ \alpha =d\gamma=d(\star \eta+\delta \mu)=d\delta \mu.\]

\textbf{Step 2}. We show that 2) implies 3). Note that by Proposition \ref{ddelta-relation}, we have that
\[\text{Im} d\cap \text{ker} \delta=\text{ker}\, \delta\cap\text{im}d\,\,\,\text{ on}\,\,\bigoplus_{k=2n-s}^{2n}\Omega_{bas}^k(M).\]
Thus it suffices to show that for any $0\leq k\leq s+1$, if a basic $(2n-k)$-form $\alpha$ is both $\delta$-exact and $d$-closed, then it must lie in the
image of $d\delta$. Indeed, given such a basic form $\alpha$, $\star \alpha$ must be both $d$-exact and $\delta$-closed. Since $\star\alpha$ is of degree $k$, it follows from 2)  that there exists a basic form $\eta$ such that $\star \alpha=d\delta \eta$. As a result, we have that
\[\alpha=\star(\star \alpha)= \star d\delta\eta =(\star d\star) (\star \delta \star) (\star \eta)=\pm d\delta  \eta.\]

\textbf{Step 3}. We show that 3) implies 1). We first show that for any  $0\leq k\leq s$, the Lefschetz map (\ref{foliation-Lefschetz-map}) is surjective. In view of Theorem \ref{Mathieu's-theoremv2}, it suffices to show that  any cohomology class $[\alpha]\in H^{2n-k}_B(M)$ has a harmonic representative, where $0\leq k\leq s$. Note that $\delta \alpha$ is both $\delta$-exact and $d$-closed, and has degree $2n-k-1$. It follows from 3) that $\delta \alpha=\delta d\gamma$ for some basic form $\gamma$. Note that $\delta(\alpha-d\gamma)=0$.
Thus $\alpha-d\gamma$ is both $d$-closed and $\delta$-closed, and therefore is harmonic. Clearly, $\alpha-d\gamma$ is a harmonic representative for the cohomology class $[\alpha]_B$.

Next, we show that for any  $0\leq k\leq s$, the Lefschetz map (\ref{foliation-Lefschetz-map}) is injective. Suppose that $[\alpha]_B\in H^{k}_B(M)$ such that
$L^{n-k}[\alpha]_B=0$. By the same argument as given in the previous paragraph, it is easy to see that $[\alpha]_B$ admits a harmonic representative. Without loss of generality, we may assume that $\alpha$ is a harmonic form. It follows that $L^{n-k}\alpha$ is both $d$-exact and $\delta$-closed. Thus 3) implies that $L^{n-k}\alpha = d\delta \gamma$ for some basic form $\gamma$ of degree $2n-k$. By Lemma \ref{yan's-result}, $\gamma=L^{n-k}\eta$ for some basic $k$-form $\eta$. Since $d\delta$ commutes with $L$,  \[L^{n-k}\alpha=d\delta (L^{n-k}\eta)=L^{n-k}(d\delta\eta).\] Recall that by Lemma \ref{yan's-result}, the map $L^{n-k}: \Omega_{bas}^k(M)\rightarrow \Omega^{2n-k}_{bas}(M)$ is an isomorphism. It follows that $\alpha=d\delta \eta$. This completes the proof of Theorem \ref{weak-ddelta-lemma}.
\end{proof}

\section{Review of contact and Sasakian geometry}\label{review-contact}

 Let $(M,\eta)$ be a co-oriented contact manifold with a contact one form $\eta$. We say that $(M,\eta)$ is \textbf{$K$-contact} if there is an endomorphism
 $\Phi: TM\rightarrow TM$ such that the following conditions are satisfied.
 \begin{itemize}
  \item[1)]  $\Phi^2=-Id+\xi\otimes \eta$, where $\xi$ is the Reeb vector field of $\eta$;
  \item [2)]  the contact one form $\eta$ is compatible with $\Phi$ in the sense that
  \[ d\eta(\Phi(X),\Phi(Y))=d\eta(X,Y)\]
  for all $X$ and $Y$, moreover, $d\eta(\Phi(X),X)>0$ for all non-zero $X \in \text{ker}\,\eta$;

  \item[3)] the Reeb field of $\eta$ is a Killing field with respect to the Riemannian metric defined
  by the formula
  \[ g(X,Y)=d\eta(\Phi(X),Y)+\eta(X)\eta(Y).\]

 \end{itemize}

Given a $K$-contact structure $(M,\eta,\Phi,g)$, one can define a metric cone
\[ (C(M), g_C)=(M\times \R_+, r^2g+dr^2),\] where $r$ is the radial coordinate. The $K$-contact structure $(M,\eta,\Phi)$ is called Sasakian if this metric cone is a K\"ahler manifold with K\"ahler form $\dfrac{1}{2} d(r^2\eta)$.

Let $(M,\eta)$ be a contact manifold with contact one form $\eta$ and a characteristic Reeb vector $\xi$,  let $\omega=d\eta$, and let $\mathcal{F}_{\xi}$ be the Reeb characteristic foliation.  As we explained in Example \ref{Contact-example}, $(M,\mathcal{F},\omega)$ is an one dimensional transversely symplectic foliation.  The following result   relates $H^*_B(M)$ to $H^*(M)$.

\begin{proposition}\label{exact-sequence}(\cite[Sec. 7.2]{BG08})\begin{itemize} \item[1)] On any $K$-contact manifold $(M,\eta)$, there is a long exact cohomology sequence
\begin{equation}\label{Gysin}  \cdots \rightarrow H^k_B(M,\R) \xrightarrow{i_*} H^k(M, \R)\xrightarrow{j_k} H^{k-1}_B(M,\R)\xrightarrow{\wedge[d\eta]} H^{k+1}_B(M,\R)\xrightarrow{i_*} \cdots,\end{equation} where
$i_*$ is the map induced by the inclusion, and $j_k$ is the map induced by $\iota_{\xi}$.

\item[2)] If $(M,\eta)$ is a compact $K$-contact manifold of dimension $2n+1$, then for any $r\geq 0$ the basic cohomology $H_B^r(M,\R)$ is finite dimensional, and for $r>2n$,
the basic cohomology $H_B^r(M,\R)=0$; moreover, for any $0\leq r\leq 2n$, there is a non-degenerate pairing
\[ H^r_B(M,\R)\otimes H^{2n-r}_B(M,\R)\rightarrow \R,\,\,\,([\alpha]_B,[\beta]_B)\mapsto \int_M\, \eta\wedge\alpha\wedge\beta. \]

\end{itemize}

\end{proposition}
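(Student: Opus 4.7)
The plan is to exploit the $K$-contact decomposition of $\xi$-invariant forms,
\begin{equation*}
\Omega^k_{\text{inv}}(M) \;=\; \Omega^k_{bas}(M) \,\oplus\, \eta \wedge \Omega^{k-1}_{bas}(M),
\end{equation*}
which is immediate from $\mathcal{L}_\xi \eta = 0$ together with the pointwise splitting $\alpha = (\alpha - \eta\wedge \iota_\xi \alpha) + \eta \wedge \iota_\xi \alpha$ (both summands are horizontal and $\xi$-invariant, hence basic). This produces a short exact sequence of cochain complexes
\begin{equation*}
0 \longrightarrow \Omega^*_{bas}(M) \hookrightarrow \Omega^*_{\text{inv}}(M) \xrightarrow{\iota_\xi} \Omega^{*-1}_{bas}(M) \longrightarrow 0,
\end{equation*}
in which $\iota_\xi$ anticommutes with $d$ on invariant forms by Cartan's formula $d\iota_\xi + \iota_\xi d = \mathcal{L}_\xi = 0$.

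For part (1), I would first argue that the inclusion $\Omega^*_{\text{inv}}(M) \hookrightarrow \Omega^*(M)$ is a quasi-isomorphism: since $\xi$ is Killing in the $K$-contact structure, the closure of its flow is a compact torus of isometries, and averaging over this torus provides a chain homotopy inverse. Hence $H^*(\Omega^*_{\text{inv}}) \cong H^*(M,\R)$, and the long exact sequence attached to the SES above becomes the claimed Gysin-type sequence, with $i_*$ the map induced by inclusion and $j_k$ the map induced by $\iota_\xi$. The only non-trivial piece is identifying the connecting homomorphism: a class $[\beta]_B \in H^{k-1}_B(M,\R)$ lifts to $\eta \wedge \beta \in \Omega^k_{\text{inv}}(M)$ (which maps to $\beta$ under $\iota_\xi$), and $d(\eta \wedge \beta) = d\eta \wedge \beta - \eta \wedge d\beta = d\eta \wedge \beta$ is basic, so the connecting map sends $[\beta]_B$ to $[d\eta]_B \wedge [\beta]_B$.

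For part (2), finite-dimensionality of $H^r_B(M,\R)$ on compact $M$ follows from El Kacimi-Hector's theorem: the characteristic foliation $\mathcal{F}_\xi$ is Riemannian (bundle-like metric from the $K$-contact structure) of codimension $2n$, so its basic cohomology is finite-dimensional, and vanishes above $\codim \mathcal{F}_\xi = 2n$. For the Poincar\'e-type pairing, $\eta \wedge \alpha \wedge \beta$ is a top-degree form on $M$ whenever $\alpha \in \Omega^r_{bas}(M)$ and $\beta \in \Omega^{2n-r}_{bas}(M)$; if $\alpha$ and $\beta$ are closed, replacing $\alpha$ by $\alpha + d\gamma$ shifts the integral by $\int_M \eta \wedge d\gamma \wedge \beta = \int_M d\eta \wedge \gamma \wedge \beta$ (via Stokes), and this vanishes because $d\eta \wedge \gamma \wedge \beta$ is a basic form of degree $2n+1$, hence zero since basic forms live in degrees $\leq 2n$. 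Non-degeneracy is then extracted from transverse Hodge theory: pairing a basic harmonic $r$-form $\alpha$ with its transverse Hodge dual yields a positive multiple of the transverse $L^2$-norm of $\alpha$.

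The main obstacle I foresee is justifying the averaging quasi-isomorphism in part (1); this rests on the compactness of the closure of the Reeb flow (a consequence of the $K$-contact condition) together with the standard but non-trivial verification that the averaging map is chain-homotopic to the identity. A secondary technicality is securing both finite-dimensionality and the existence of transverse harmonic representatives in part (2), which are standard in transverse Riemannian Hodge theory but require the full force of the compactness hypothesis and careful definitions.
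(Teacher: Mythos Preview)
The paper does not give a proof of this proposition: it is quoted as background from \cite[Sec.~7.2]{BG08} and used without argument. Your sketch is the standard route followed in that reference (short exact sequence of basic/invariant forms, averaging over the closure of the Reeb flow to identify invariant with de~Rham cohomology, and transverse Riemannian Hodge theory for finiteness and duality), and it is correct; there is nothing in the paper's own treatment to compare it against.

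One small correction worth flagging: in your well-definedness computation for the pairing you should get, after Stokes,
\[
\int_M \eta\wedge d\gamma\wedge\beta \;=\; -\int_M d\eta\wedge\gamma\wedge\beta
\]
up to sign, and the vanishing is because $d\eta\wedge\gamma\wedge\beta$ is basic of degree $2n+1$ (your argument is right, only the sign is off). Otherwise the outline is sound.
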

On a compact Sasakian manifold $M$, the following Hard Lefschetz theorem is due to El Kacimi-Alaoui \cite{ka90}.

\begin{theorem}(\cite{ka90})\label{trans-Kahler} Let $(M,\eta,g)$ be a compact Sasakian manifold with a contact one form $\eta$ and a Sasakian metric $g$. Then $M$ satisfies the transverse Hard Lefschetz property introduced in Definition \ref{weak-lefschetz-property}.
\end{theorem}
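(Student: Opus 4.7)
The plan is to reduce the statement to the basic Hodge theory of transversely Kähler foliations developed by El Kacimi-Alaoui, the key point being that a Sasakian structure upgrades the transversely symplectic Reeb foliation to a transversely Kähler one. Concretely, I would first verify that on a Sasakian manifold $(M,\eta,g)$ the Reeb characteristic foliation $\mathcal{F}_\xi$ is transversely Kähler: the endomorphism $\Phi$ descends to a holonomy-invariant complex structure $J$ on the normal bundle $Q = TM/\R\xi$; the transverse part of $g$ is $J$-Hermitian; and the associated fundamental form on $Q$ corresponds to the basic closed form $\omega = d\eta$ under the pullback identification of Section~\ref{transverse-sym}.

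Granted this, the core ingredient is the basic Hodge decomposition theorem of \cite{ka90}: for a transversely Kähler foliation on a compact manifold, the basic de Rham complex admits an orthogonal decomposition
\[ \Omega^k_{\mathrm{bas}}(M) = \mathcal{H}^k_B(M) \oplus d\,\Omega^{k-1}_{\mathrm{bas}}(M) \oplus d^*_B\,\Omega^{k+1}_{\mathrm{bas}}(M), \]
where $d^*_B$ is the formal adjoint of $d$ with respect to the transverse $L^2$ pairing on basic forms, $\mathcal{H}^k_B(M) = \ker \Delta_B$ with $\Delta_B = d d^*_B + d^*_B d$, and consequently $H^k_B(M,\R) \cong \mathcal{H}^k_B(M)$. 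By Proposition~\ref{exact-sequence}(2), each $\mathcal{H}^k_B(M)$ is finite dimensional.

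Next I would invoke the transverse Kähler identities, again from \cite{ka90}: exactly as in the classical Kähler case, $d^*_B$ and its conjugate companions satisfy the standard commutation relations with $L$ and $\Lambda$, from which one concludes that $\Delta_B$ commutes with both $L$ and $\Lambda$. Together with the degree operator, this endows $\mathcal{H}^*_B(M)$ with the structure of a finite-dimensional $sl(2,\R)$-module in which the Lefschetz data of Equation~(\ref{sl2-module-on-forms}) act. Standard finite-dimensional $sl(2,\R)$-representation theory then produces, for every $0 \leq k \leq n$, the isomorphism $L^{n-k} : \mathcal{H}^k_B(M) \to \mathcal{H}^{2n-k}_B(M)$, which transports along the basic Hodge isomorphism to the desired isomorphism
\[ L^{n-k} : H^k_B(M,\R) \longrightarrow H^{2n-k}_B(M,\R) \]
required by Definition~\ref{weak-lefschetz-property}.

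The main obstacle is not the $sl(2)$ bookkeeping, which is formal once the Kähler identities are in place, but rather the verification of the basic Hodge decomposition and of the transverse Kähler identities in the foliated setting; here one must control the failure of ellipticity of $\Delta_B$ on all forms by restricting to the basic subcomplex and using transverse ellipticity, which is precisely the technical heart of \cite{ka90}. Since those results are available off the shelf, the proof on our side reduces to confirming that Sasakian geometry supplies the transversely Kähler package to which they apply.
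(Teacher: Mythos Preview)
Your sketch is correct and follows exactly the standard route through El Kacimi-Alaoui's transverse K\"ahler Hodge theory. Note, however, that the paper does not actually prove this theorem: it is stated as a citation from \cite{ka90} with no accompanying argument, so there is no ``paper's own proof'' to compare against---your proposal is essentially an outline of what the cited reference does.
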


More recently,  Cappelletti-Montano, De Nicola, and Yudin \cite{CNY13} established a Hard Lefschetz theorem for the De Rham cohomology group of a compact Sasakian manifold.

\begin{theorem}(\cite{CNY13}) \label{HLP-sasakian} Let $(M,\eta,g)$ be a $2n+1$ dimensional compact Sasakian manifold with a contact one form $\eta$ and a Sasakian metric $g$, and let $\Pi: \Omega^*(M)\rightarrow \Omega_{har}^*(M)$ be the projection onto the space of Harmonic forms. Then for any $0\leq k\leq n$, the map
\[Lef_k: H^{k}(M,\R)\rightarrow H^{2n+1-k}(M,\R), [\beta]\mapsto [\eta\wedge (d\eta)^{n-k}\wedge \Pi \beta]\]
is an isomorphism. Moreover, for any $[\beta]\in H^k(M,\R)$, and for any closed basic primitive $k$-form $\beta'\in [\beta]$, $[\eta\wedge (d\eta)^{n-k}\wedge \beta']=Lef_k([\beta])$. In particular, the Lefschetz map $Lef_k$ does not depend on the choice of a compatible Sasakian metric.

\end{theorem}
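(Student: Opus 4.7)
The strategy is to transfer the transverse Hard Lefschetz isomorphism of Theorem~\ref{trans-Kahler} to the de Rham cohomology of $M$ via the Gysin-type long exact sequence (\ref{Gysin}). By Theorem~\ref{trans-Kahler} the map $L^{n-k+1}\colon H^{k-1}_B(M,\R)\to H^{2n-k+1}_B(M,\R)$ is an isomorphism for every $1\le k\le n$, so in particular $L\colon H^{k-1}_B(M,\R)\to H^{k+1}_B(M,\R)$ is injective. Feeding this into (\ref{Gysin}) forces the connecting map $j_k$ to vanish, whence $i_*\colon H^k_B(M,\R)\twoheadrightarrow H^k(M,\R)$ is surjective with kernel $L(H^{k-2}_B(M,\R))$. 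Combined with the Lefschetz decomposition $H^k_B=PH^k_B\oplus L(H^{k-2}_B)$ of Theorem~\ref{primitive-decomposition}, this yields a canonical isomorphism $PH^k_B(M,\R)\xrightarrow{\cong}H^k(M,\R)$ for every $0\le k\le n$, so each class $[\beta]\in H^k(M,\R)$ admits a closed basic primitive representative $\beta'$. Then
\[ d\bigl(\eta\wedge(d\eta)^{n-k}\wedge\beta'\bigr)=(d\eta)^{n-k+1}\wedge\beta'=0 \]
by primitivity, so $\eta\wedge(d\eta)^{n-k}\wedge\beta'$ determines a class in $H^{2n+1-k}(M,\R)$.

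For the isomorphism property, I would perform the analogous analysis of (\ref{Gysin}) at degree $2n+1-k$: the transverse Hard Lefschetz, applied in its dual range, identifies $H^{2n+1-k}(M,\R)$ with $PH^k_B(M,\R)$ as well, so the source and target of $Lef_k$ have the same finite dimension. Under these identifications $Lef_k$ corresponds, via the transverse Lefschetz iso $L^{n-k}\colon PH^k_B\to L^{n-k}PH^k_B$, to the identity, and is therefore an isomorphism. Well-definedness of the assignment $[\beta]\mapsto[\eta\wedge(d\eta)^{n-k}\wedge\beta']$ is the most delicate step: if $\beta''$ is another closed basic primitive representative of $[\beta]$, then $\beta''-\beta'=d\sigma$ for some basic $\sigma$ (since under the iso $PH^k_B\cong H^k(M)$ the two primitive classes coincide in $H^k_B$), and the Leibniz identity
\[ \eta\wedge(d\eta)^{n-k}\wedge d\sigma=(d\eta)^{n-k+1}\wedge\sigma-d\bigl(\eta\wedge(d\eta)^{n-k}\wedge\sigma\bigr) \]
reduces the problem to showing $L^{n-k+1}\sigma$ is exact on $M$. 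This residual exactness is the crux of the theorem, and must be extracted from the transverse symplectic Hodge theory of Section~\ref{transverse-sym-Hodge}, in particular from the symplectic $d\delta$-lemma of Theorem~\ref{weak-ddelta-lemma} applied via duality in basic cohomology.

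The formula $Lef_k([\beta])=[\eta\wedge(d\eta)^{n-k}\wedge\Pi\beta]$ then follows because on a $K$-contact manifold the Reeb field is Killing, so $\mathcal{L}_\xi\Pi\beta=0$ and $\Pi\beta$ decomposes as $\beta_1+\eta\wedge\beta_2$ with $\beta_1,\beta_2$ basic; wedging with $\eta$ annihilates the second summand, and Sasakian Hodge theory ensures that $\beta_1$ is a closed basic primitive representative of $[\beta]$, so $[\eta\wedge(d\eta)^{n-k}\wedge\Pi\beta]=[\eta\wedge(d\eta)^{n-k}\wedge\beta_1]=[\eta\wedge(d\eta)^{n-k}\wedge\beta']$ by the well-definedness established above. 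Metric-independence is then immediate from the primitive-representative formula, which involves no Riemannian structure. The principal obstacle I anticipate is exactly the well-definedness step: carefully accounting for the exactness of $L^{n-k+1}\sigma$ in $\Omega(M)$, a computation that invokes both the transverse Hard Lefschetz (Theorem~\ref{trans-Kahler}, due to El Kacimi-Alaoui) and the symplectic $d\delta$-lemma developed in the present paper.
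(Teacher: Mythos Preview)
The paper does not prove Theorem~\ref{HLP-sasakian}: it is quoted verbatim from \cite{CNY13} as background for Definition~\ref{HLP-contact}, with no argument supplied. So there is no ``paper's own proof'' to compare against. That said, your outline is essentially the content of Section~\ref{Kcontact-s-lefschetz} (Lemmas~\ref{tech-lemma1}, \ref{tech-lemma2} and Step~1 of Theorem~\ref{main-result1}), specialised from the $s$-Lefschetz $K$-contact setting to the Sasakian case where full transverse Hard Lefschetz holds. In that sense your strategy is exactly the one the paper pursues, only for a more general statement.

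Two points where your sketch is loose. First, in the well-definedness step you reduce to showing $L^{n-k+1}\sigma$ is $d$-exact in $\Omega(M)$ and then invoke the $d\delta$-lemma; but the $d\delta$-lemma is not what is needed here. The form $L^{n-k+1}\sigma$ is closed and basic of degree $2n-k+1$; by transverse Hard Lefschetz its basic class lies in the image of $L\colon H^{2n-k-1}_B\to H^{2n-k+1}_B$, hence in $\ker i_*$ by the Gysin sequence~(\ref{Gysin}). This is precisely the mechanism behind Lemma~\ref{tech-lemma2}, though the paper's proof there takes a different route, using the $d\delta$-lemma to write $\alpha_1-\alpha_2=d\delta\varphi_k$ with $\varphi_k$ primitive and then computing directly. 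Your shortcut (via injectivity of $i_*|_{PH^k_B}$ from Lemma~\ref{tech-lemma1} to get $[\beta']_B=[\beta'']_B$) is valid in the full Hard Lefschetz case and slightly more efficient.

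Second, the assertion that the basic component $\beta_1$ of $\Pi\beta$ is a closed primitive basic form is genuine Riemannian Hodge theory on Sasakian manifolds (harmonic forms are basic-primitive plus $\eta\wedge(\text{basic-primitive})$); this is the input from \cite{CNY13} that neither you nor the present paper supply. Without it, the link between the harmonic-projection formula and the primitive-representative formula remains an appeal to the literature, which is consistent with how the paper treats Theorem~\ref{HLP-sasakian}.
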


This result motivates them to propose the following definition of the Hard Lefschetz property for a contact manifold.

\begin{definition}\label{HLP-contact} Let $(M,\eta)$ be a $2n+1$ dimensional compact contact manifold with a contact $1$-form $\eta$. For any $0\leq k\leq n$,
define the Lefschetz relation between the cohomology group $H^{k}(M,\R)$ and $H^{2n+1-k}(M,\R)$ to be
\begin{equation}\label{Lef-relation} \mathcal{R}_{Lef_k}=\{([\beta],[\eta\wedge L^{n-k}\beta])\,\vert \iota_{\xi}\beta=0, d\beta=0, L^{n-k+1}\beta=0\}.\end{equation}
If it is the graph of an isomorphism $Lef_k: H^{k}(M,\R)\rightarrow H^{2n+1-k}(M,\R)$ for any $0\leq k\leq n$, then
the contact manifold $(M,\eta)$ is said to have the hard Lefschetz property.

\end{definition}

We introduce the following refinement of Definition \ref{HLP-contact}.

\begin{definition} \label{HLP-contactv2} Let $(M,\eta)$ be a $2n+1$ dimensional compact contact manifold with a contact $1$-form $\eta$, and let $0\leq s\leq n-1$.  If for any $0\leq k\leq s$, (\ref{Lef-relation}) is the graph of an isomorphism $Lef_k: H^{k}(M,\R)\rightarrow H^{2n+1-k}(M,\R)$, then
the contact manifold $(M,\eta)$ is said to have the $s$-Lefschetz property.

\end{definition}

\begin{remark} Note that by the Poincar\'e duality, every compact contact manifold is $0$-Lefschetz. For the same reason, a simply-connected compact contact manifold is $1$-Lefschetz.

\end{remark}

\section{K-contact manifolds with the transverse $s$-Lefschetz property}\label{Kcontact-s-lefschetz}

Throughout this section, we assume $(M,\eta)$ to be a $2n+1$ dimensional compact $K$-contact manifold with a contact $1$-form $\eta$, and a Reeb vector field $\xi$. Let $\omega=d\eta$, and let $\mathcal{F}_{\xi}$ be the Reeb characteristic foliation. We will apply the machinery developed in Section \ref{ddelta-lemma} to the transverse symplectic flow $(M,\mathcal{F}_{\xi},\omega)$, and prove that $M$ satisfies the transverse $s$-Leschetz  property if and only if it satisfies the $s$-Lefschetz property stated in Definition \ref{HLP-contactv2}.

\begin{lemma}\label{tech-lemma1}Let $(M,\eta)$ be a $2n+1$ dimensional compact $K$-contact manifold with a contact $1$-form $\eta$.  Assume that $M$ satisfies the transverse $s$-Lefschetz property, $0\leq s\leq n-1$. Then for any $0\leq k\leq s+1$, the  map \[ i_*: H^k_B(M, \R)\rightarrow H^k(M,\R)\] is surjective; moreover, its image equals
\begin{equation} \label{image} \{i_*[\alpha]_B\,\vert\, \alpha \in \Omega^k_{bas}(M), d\alpha=0, \omega^{n-k+1}\wedge \alpha=0\}.\end{equation}
As a result, the restriction map $i_*: PH^k_{B}(M,\R)\rightarrow H^k(M,\R)$ is an isomorphism.
\end{lemma}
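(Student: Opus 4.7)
The plan is to combine the Gysin-type long exact sequence (\ref{Gysin}) from Proposition \ref{exact-sequence} with the primitive Lefschetz decomposition from Theorem \ref{primitive-decomposition}, and to exploit the identity $\omega\wedge\alpha = d\eta\wedge\alpha = d(\eta\wedge\alpha)$ valid for any closed basic form $\alpha$ (since $\iota_\xi\alpha=0$). For the surjectivity of $i_*$, the case $k=0$ is immediate by connectedness; for $1\le k\le s+1$ the exactness at $H^k(M,\R)$ reduces the claim to injectivity of $\wedge[\omega]:H^{k-1}_B(M,\R)\to H^{k+1}_B(M,\R)$. Since $k-1\le s$, the transverse $s$-Lefschetz property makes $L^{n-k+1}:H^{k-1}_B\to H^{2n-k+1}_B$ an isomorphism, and factoring it as $L^{n-k}\circ L$ forces $L$ itself to be injective on $H^{k-1}_B$.

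For the image description, one inclusion is tautological. For the other, pick $[\beta]\in H^k(M,\R)$ and, using the surjectivity just proved, lift it to some $[\alpha]_B\in H^k_B(M,\R)$. Since $k\le s+1\le s+2$, Theorem \ref{primitive-decomposition} yields a primitive Lefschetz decomposition $[\alpha]_B = \sum_{r\ge 0} L^r[\alpha_r]_B$, each $\alpha_r$ being a closed primitive basic $(k-2r)$-form. For each $r\ge 1$ the identity $\omega\wedge\alpha_r = d(\eta\wedge\alpha_r)$ shows that $i_*(L^r[\alpha_r]_B)=0$ in $H^k(M,\R)$; consequently $[\beta] = i_*[\alpha_0]_B$, with $\alpha_0$ a closed primitive basic $k$-form, which gives exactly the claimed description of $\operatorname{im}(i_*)$.

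For the induced map $i_*:PH^k_B(M,\R)\to H^k(M,\R)$, surjectivity is immediate from the previous paragraph. For injectivity, suppose $[\alpha]_B\in PH^k_B$ maps to zero. When $k\le 1$ the claim is trivial (for $k=0$ by connectedness; for $k=1$ because $H^{-1}_B=0$ in (\ref{Gysin})). When $2\le k\le s+1$, exactness of (\ref{Gysin}) supplies $[\beta]_B\in H^{k-2}_B(M,\R)$ with $[\alpha]_B = L[\beta]_B$; applying $L^{n-k+1}$ and using primitivity of $[\alpha]_B$ yields $L^{n-(k-2)}[\beta]_B = 0$. Since $k-2\le s-1$, the transverse $s$-Lefschetz property makes $L^{n-(k-2)}$ an isomorphism on $H^{k-2}_B$, so $[\beta]_B=0$ and hence $[\alpha]_B=0$.

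The main subtlety is the collapse of the primitive Lefschetz decomposition in $H^*(M,\R)$: once the elementary observation $\omega\wedge\alpha_r = d(\eta\wedge\alpha_r)$ is in hand, everything else is a formal manipulation of the exact sequence (\ref{Gysin}) and the $s$-Lefschetz hypothesis. I do not anticipate any genuinely hard step beyond ensuring that the index ranges $k\le s+1$ and $k-2\le s-1$ stay within the validity regime of Theorem \ref{primitive-decomposition} and Definition \ref{weak-lefschetz-property}.
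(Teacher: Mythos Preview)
Your proposal is correct and follows essentially the same route as the paper: the Gysin sequence (\ref{Gysin}) for surjectivity of $i_*$, the primitive decomposition from Theorem \ref{primitive-decomposition} to peel off the $L^r$-pieces (which die under $i_*$ because $\omega\wedge(\text{closed basic})$ is $d$-exact via $\eta$), and the $s$-Lefschetz hypothesis for injectivity on $PH^k_B$. One small imprecision: Theorem \ref{primitive-decomposition} only gives $[\alpha_r]_B\in PH^{k-2r}_B(M,\R)$, i.e.\ primitivity \emph{in cohomology}; to obtain an actual closed primitive basic form $\alpha_0$ representing the class (as required by the description (\ref{image}), where $\omega^{n-k+1}\wedge\alpha=0$ is an equation of forms), you need Remark \ref{harmonic-primitive-class}. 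The paper invokes exactly that remark at this point; once you add that citation your argument matches the paper's.
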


\begin{proof} Consider the long exact sequence (\ref{Gysin}). By assumption, $M$ satisfies the transverse $s$-Lefschetz property. Thus  the map \[H^{i}_B(M,\R)\xrightarrow{\wedge[\omega]} H_B^{i+2}(M,\R)\] is injective for any
$ 0\leq i \leq s$. It then follows from the exactness of the sequence (\ref{Gysin}) that the map

\[ i_*: H^{k}_B(M,\R)\rightarrow H^{k}(M,\R)\] is surjective for any  $ 0\leq k\leq s+1$. This proves the first assertion in Lemma \ref{tech-lemma1}.

Since $M$ satisfies the transverse $s$-Lefschetz property, by Theorem \ref{primitive-decomposition}, for any $0\leq k\leq s+1$,
\[ H^{k}_B(M,\R)= PH^k_B(M)\oplus L H^{k-2}_B(M,\R).\]

It is clear from the exactness of the sequence (\ref{Gysin}) that
\[i_*\left(H^k_B(M,\R)\right)=i_*\left(PH^k_B(M,\R)\right),\,\,\,\text{ker} i_* \cap PH^k(M,\R)=0.\]
Therefore the restriction map $i_*:PH^k_B(M,\R)\rightarrow H^k(M,\R)$ is an isomorphism. Finally, the fact that $i_*\left( H^k_B(M,\R)\right)$ equals (\ref{image}) follows easily from Remark \ref{harmonic-primitive-class}. This completes the proof of Lemma \ref{tech-lemma1}.

\end{proof}


 We are ready to define the Lefschetz map on the cohomology groups. In \cite{CNY13}, such maps are introduced using Riemannian Hodge theory associated to a compatible Sasakian metric. In contrast, we define these maps here using the symplectic Hodge theory on the space of basic forms.

  For any $ 0\leq k\leq s+1$, define $Lef_k : H^{k}(M,\R)\rightarrow H^{2n+1-k}(M,\R)$ as follows.  For any cohomology class
$[\gamma] \in H^{k}(M,\R)$, by Lemma \ref{tech-lemma1} there exists a closed primitive basic $k$-form $\alpha \in \mathcal{P}_{bas}^k(M)$ such that $i_*[\alpha]_B=[\gamma]$. Observe that $d \left( \eta\wedge L^{n-k}\wedge \alpha\right)= L^{n-k+1} \alpha=0$. We define \begin{equation}\label{main-map} Lef_k[\gamma]= [\eta\wedge L^{n-k} \alpha].\end{equation}

\begin{lemma}\label{tech-lemma2} Assume that $M$ satisfies the transverse $s$-Lefschetz property. Then for any $0\leq k\leq s+1$, the map (\ref{main-map}) does not depend on the choice of closed primitive basic forms.
\end{lemma}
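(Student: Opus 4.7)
The plan is to reduce well-definedness of $Lef_k$ to a statement about the Poincar\'e duality pairing on $M$, and then collapse that pairing onto the basic cohomology pairing. If $\alpha_1, \alpha_2 \in \mathcal{P}^k_{bas}(M)$ are closed primitive basic $k$-forms with $i_*[\alpha_1]_B = i_*[\alpha_2]_B = [\gamma]$, setting $\beta := \alpha_1 - \alpha_2$ reduces the problem to showing that $\eta\wedge L^{n-k}\beta$ is exact whenever $\beta \in \mathcal{P}^k_{bas}(M)$ is closed with $i_*[\beta]_B = 0$. A direct computation using $d\eta=\omega$, $d\beta = 0$, $[d,L]=0$, and the primitivity relation $L^{n-k+1}\beta = 0$ shows immediately that $d(\eta\wedge L^{n-k}\beta) = L^{n-k+1}\beta - \eta\wedge L^{n-k}d\beta = 0$, so $\eta\wedge L^{n-k}\beta$ is already a closed $(2n+1-k)$-form.

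I would then apply Lemma \ref{tech-lemma1} on both sides. On the $\beta$-side: since $i_*:PH^k_B(M,\R)\to H^k(M,\R)$ is an isomorphism for $0\leq k\leq s+1$ and $[\beta]_B \in PH^k_B(M,\R)$ maps to zero under $i_*$, it must vanish in $H^k_B(M,\R)$; consequently $[L^{n-k}\beta]_B = L^{n-k}[\beta]_B = 0$ in $H^{2n-k}_B(M,\R)$. On the test-class side: since $M$ is a compact oriented $(2n+1)$-manifold (oriented, for instance, by $\eta\wedge(d\eta)^n$), the class $[\eta\wedge L^{n-k}\beta]$ vanishes in $H^{2n+1-k}(M,\R)$ if and only if $\int_M\phi\wedge\eta\wedge L^{n-k}\beta = 0$ for every closed $k$-form $\phi$ on $M$. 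Applying Lemma \ref{tech-lemma1} once more to $[\phi] \in H^k(M,\R)$, any such $\phi$ may be written as $\phi=\phi' + d\psi$ with $\phi'\in\mathcal{P}^k_{bas}(M)$ closed, and Stokes together with closedness of $\eta\wedge L^{n-k}\beta$ absorbs the $d\psi$ term, so that
\[\int_M\phi\wedge\eta\wedge L^{n-k}\beta \;=\; (-1)^k\int_M\eta\wedge\phi'\wedge L^{n-k}\beta.\]
By Proposition \ref{exact-sequence}(2), the right-hand side is (up to sign) the basic cohomology pairing $\langle[\phi']_B,[L^{n-k}\beta]_B\rangle$, which vanishes because $[L^{n-k}\beta]_B=0$ from the preceding step.

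I do not foresee a genuine obstacle: the argument is a clean two-sided use of Lemma \ref{tech-lemma1} together with Poincar\'e duality on $M$ and the basic cohomology pairing of Proposition \ref{exact-sequence}. The mild points to watch are the sign from commuting $\eta$ past $\phi'$ and the verification that $\eta\wedge L^{n-k}\beta$ is closed, but both are immediate one-line computations enabled by primitivity.
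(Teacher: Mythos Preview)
Your argument is correct and takes a genuinely different route from the paper's own proof.

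The paper proceeds computationally via the $d\delta$-lemma: from the Gysin sequence it writes $[\alpha_1]_B-[\alpha_2]_B=L[\beta]_B$, chooses $\beta$ harmonic, applies Theorem \ref{weak-ddelta-lemma} to get $\alpha_1-\alpha_2-L\beta=d\delta\varphi$, and then Lefschetz-decomposes $\varphi$ to isolate $\alpha_1-\alpha_2=d\delta\varphi_k$ with $\varphi_k$ primitive. An explicit integration-by-parts using $[L,\delta]=-d$ then exhibits $\eta\wedge L^{n-k}(\alpha_1-\alpha_2)$ as an exact form. By contrast, you bypass the $d\delta$-lemma entirely: you use the injectivity of $i_*:PH^k_B(M,\R)\to H^k(M,\R)$ from Lemma \ref{tech-lemma1} to kill $[\beta]_B$ (and hence $[L^{n-k}\beta]_B$) directly in basic cohomology, and then pair against an arbitrary de Rham class using Poincar\'e duality on $M$ together with the basic pairing of Proposition \ref{exact-sequence}(2). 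Your approach is shorter and more conceptual, and it makes clear that the well-definedness of $Lef_k$ is a pure duality statement; the paper's approach is more constructive in that it actually produces a primitive for $\eta\wedge L^{n-k}(\alpha_1-\alpha_2)$, and it showcases the $d\delta$-machinery developed earlier. Both are valid; neither obviously dominates the other.
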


\begin{proof} Suppose that there are two closed primitive basic $k$-forms $\alpha_1$ and $\alpha_2$ such that
$i_*[ \alpha_1 ]_B=i_*[\alpha_2]_B\in H^{k}(M,\R)$. It follows from the exactness of the sequence (\ref{Gysin}) that
$ [\alpha_1]_B=[\alpha_2]_B +L [\beta]_B$ for some
closed basic $(k-2)$-form $\beta$. Since $M$ satisfies the transverse $s$-Lefschetz property, by Theorem
\ref{Mathieu's-theoremv2} we may well assume that $\beta$ is symplectic harmonic.

Therefore,  $\alpha_1-\alpha_2 -L \beta$ is both $d$-exact and $\delta$-closed. By Theorem \ref{weak-ddelta-lemma}, the symplectic $d\delta$-lemma,
there exists a basic $k$-form $\varphi$ such that \begin{equation} \label{difference} \alpha_1-\alpha_2 -L \beta= d\delta \varphi \end{equation}

Lefschetz decompose $\beta$ and $\varphi$ as follows.
\[ \begin{split}  &\beta=\beta_{k-2}+L\beta_{k-4}+L^2\beta_{k-6}+\cdots \\
& \varphi=  \varphi_{k}+L\varphi_{k-2}+L^2\varphi_{k-4}+\cdots
\end{split} \] Here $\varphi_{k-i}\in \mathcal{P}_{bas}^{k-i}(M)$, $i=0, 2,\cdots$,  and $\beta_{k-i}\in \mathcal{P}_{bas}^{k-i}(M)$, $i=2,4,\cdots$.
Since $d\delta$ commutes with $L$, it follows from (\ref{difference}) that
\[ \alpha_1-\alpha_2=d\delta \varphi_k +L(\beta_{k-2}+d\delta \varphi_{k-2})+L^2(\beta_{k-4}+d\delta\varphi_{k-4})\cdots  .\]

Since $d\delta$ commutes with $\Lambda$, $d\delta$ maps primitive forms to primitive forms. It then follows from the uniqueness of the Lefschetz decomposition that
\[\alpha_1-\alpha_2=d\delta \varphi_k.\]

Observe that \[\begin{split} \eta\wedge \left(\omega^{n-k}\wedge (\alpha_1-\alpha_2)\right)&=
\eta\wedge \left(\omega^{n-k}\wedge d\delta\varphi_k\right)\\&=
-d\left( \eta\wedge \omega^{n-k}\wedge \delta\varphi_k\right)+ \left(L^{n-k+1} \delta\varphi_k\right).
\end{split}\]

Now using the commutator relation $[L, \delta]=-d$ repeatedly, it is clear that $L^{n-k+1} \delta\varphi_k$
must be $d$-exact, since $\varphi_k$ is a primitive $k$-form and so $L^{n-k+1}\varphi_{k}=0$. It follows immediately
that $\eta\wedge L^{n-k} (\alpha_1-\alpha_2)$ must be $d$-exact. This completes the proof of Lemma \ref{tech-lemma2}.

\end{proof}

\begin{theorem} \label{main-result1}Let $M$ be a $2n+1$ dimensional compact $K$-contact manifold with a contact one form $\eta$, and let $0\leq s\leq n-1$. Then it satisfies the transverse $s$-Lefschetz property as introduced in Definition \ref{weak-lefschetz-property} if and only if it satisfies the $s$-Lefschetz property as introduced in Definition \ref{HLP-contactv2}.
\end{theorem}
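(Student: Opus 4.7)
The plan is to treat the two implications separately. For the forward direction, suppose $M$ satisfies the transverse $s$-Lefschetz property. Lemma \ref{tech-lemma1} gives that $i_*: PH_B^k(M,\R) \to H^k(M,\R)$ is an isomorphism for $0 \le k \le s+1$, and Lemma \ref{tech-lemma2} shows that
\[ Lef_k([\gamma]) := [\eta \wedge L^{n-k}\alpha], \quad \alpha \in PH_B^k(M) \text{ closed primitive basic with } i_*[\alpha]_B = [\gamma], \]
is a well-defined map $H^k(M,\R) \to H^{2n+1-k}(M,\R)$ whose graph, by construction, coincides with the relation $\mathcal{R}_{Lef_k}$ of (\ref{Lef-relation}). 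It remains to verify $Lef_k$ is an isomorphism for $0 \le k \le s$. Since $\dim H^{2n+1-k}(M,\R) = \dim H^k(M,\R)$ by ordinary Poincar\'e duality on the compact orientable manifold $M$, I need only establish injectivity. Given a closed primitive basic representative $\alpha$ with $\eta \wedge L^{n-k}\alpha = d\tau$ on $M$, I would average $\tau$ over the closure of the Reeb flow (a torus acting by isometries, using the $K$-contact condition) to replace it by a $\xi$-invariant $\bar\tau$; contracting $\iota_\xi$ into $d\bar\tau = \eta \wedge L^{n-k}\alpha$ and applying Cartan's formula gives $L^{n-k}\alpha = -d(\iota_\xi \bar\tau)$ with $\iota_\xi \bar\tau$ basic, so $[L^{n-k}\alpha]_B = 0$ in $H^{2n-k}_B$, and the transverse $s$-Lefschetz hypothesis forces $[\alpha]_B = 0$.

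For the converse, assume $M$ has the $s$-Lefschetz property of Definition \ref{HLP-contactv2}. The plan is to prove by induction on $k \in \{0,1,\dots,s\}$ that $L^{n-k}: H^k_B(M,\R) \to H^{2n-k}_B(M,\R)$ is an isomorphism. The base case $k=0$ is immediate because $[\omega^n]_B$ pairs non-trivially with $[1]_B \in H^0_B$ under the basic Poincar\'e pairing of Proposition \ref{exact-sequence}(2). At the inductive step at degree $k$, running the argument of Lemma \ref{tech-lemma1} with $s$ replaced by $k-1$, together with Theorem \ref{primitive-decomposition}, yields that $i_*: PH_B^k \to H^k(M,\R)$ is an isomorphism and $H^k_B = PH_B^k \oplus L\,H^{k-2}_B$; moreover the inductive hypothesis supplies the isomorphisms $L^{n-k+2}: H^{k-2}_B \to H^{2n-k+2}_B$ and $L^{n-k+1}: H^{k-1}_B \to H^{2n-k+1}_B$.

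Basic Poincar\'e duality gives $\dim H^k_B = \dim H^{2n-k}_B$, so it suffices to show $L^{n-k}$ is injective on $H^k_B$. If $[\alpha_{\mathrm{prim}}]_B + L[\beta]_B$ lies in the kernel, applying $L$ once more and using primitivity of $\alpha_{\mathrm{prim}}$ yields $L^{n-k+2}[\beta]_B = 0$, so $[\beta]_B = 0$ by the inductive iso, and the problem reduces to injectivity on $PH_B^k$. Pick a closed primitive basic representative $\alpha$ with $L^{n-k}\alpha = d\sigma$ for some basic $\sigma$. The identity $d(\eta \wedge \sigma) = \omega \wedge \sigma - \eta \wedge d\sigma$ then yields $\eta \wedge L^{n-k}\alpha \equiv L\sigma$ modulo de Rham exact forms on $M$. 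Now $L\sigma$ is closed basic of degree $2n-k+1$, and the inductive surjectivity $H^{2n-k+1}_B = L^{n-k+1}(H^{k-1}_B)$ lets me write $[L\sigma]_B = L \cdot L^{n-k}[\rho]_B$ for some $[\rho]_B \in H^{k-1}_B$. Gysin exactness (image of $L$ equals kernel of $i_*$ in basic cohomology) then forces $i_*[L\sigma]_B = 0$, hence $[\eta \wedge L^{n-k}\alpha] = 0$ in $H^{2n+1-k}(M,\R)$. The assumed $s$-Lefschetz property then gives $[\alpha] = 0$ in $H^k(M,\R)$, and $i_*: PH_B^k \to H^k(M,\R)$ being an iso finally yields $[\alpha_{\mathrm{prim}}]_B = 0$.

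The main obstacle is in the reverse direction: at each inductive stage one must carefully assemble the Gysin sequence, basic Poincar\'e duality, and Theorem \ref{primitive-decomposition} (applied with the transverse $(k-1)$-Lefschetz level already proved) to secure both the Lefschetz decomposition and the isomorphism $i_*: PH_B^k \cong H^k(M,\R)$ needed to invoke the de Rham hypothesis. Once those structural facts are in place, the key move, trading basic exactness of $L^{n-k}\alpha$ for de Rham exactness of $\eta \wedge L^{n-k}\alpha$ via the identity $d(\eta \wedge \sigma) = \omega \wedge \sigma - \eta \wedge d\sigma$, delivers injectivity of $L^{n-k}$ on the primitive part.
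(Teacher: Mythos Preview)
Your argument is correct. The forward direction is essentially the paper's: your averaging over the torus closure of the Reeb flow is just an explicit realization of the Gysin map $j_{2n+1-k}$ induced by $\iota_\xi$, which the paper invokes directly to get $[L^{n-k}\alpha]_B=0$ from $[\eta\wedge L^{n-k}\alpha]=0$.

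The converse, however, follows a genuinely different route. The paper first proves, independently of the induction, the claim that any $[\alpha]_B\in PH^k_B$ with $L^{n-k}[\alpha]_B=0$ must lie in $\operatorname{im}L$; it does this by a cup-product argument, pairing $Lef_k(i_*[\alpha]_B)\in H^{2n+1-k}(M)$ against all $i_*[\beta]_B$ with $\beta$ closed primitive basic, pushing through $j_{2n+1}\colon H^{2n+1}(M)\xrightarrow{\sim}H^{2n}_B(M)$, and then invoking de Rham Poincar\'e duality (the hypothesis that $\mathcal R_{Lef_k}$ is a graph on all of $H^k(M)$ guarantees such $[\beta]$ exhaust $H^k(M)$). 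Your approach instead computes directly: from $L^{n-k}\alpha=d\sigma$ with $\sigma$ basic you write $\eta\wedge L^{n-k}\alpha=L\sigma-d(\eta\wedge\sigma)$, note $L\sigma$ is closed basic of degree $2n-k+1$, use the inductive isomorphism $L^{n-k+1}\colon H^{k-1}_B\to H^{2n-k+1}_B$ to place $[L\sigma]_B$ in $\operatorname{im}L$, and then kill $i_*[L\sigma]_B$ by Gysin exactness. Your route is more elementary in that it avoids the duality pairing and the implicit surjectivity of $\beta\mapsto[\beta]$ onto $H^k(M)$; the paper's route has the advantage that its key claim is established once, uniformly in $k$, without feeding the inductive hypothesis into the high-degree basic cohomology.
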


\begin{proof} {\bf Step 1.} \,Assume that $M$ satisfies the transverse $s$-Lefschetz property. We show that $M$ satisfies the $s$-Lefschetz property. Since $M$ is oriented and compact, in view of the Poincar\'e duality, it suffices to show that for any $0\leq k\leq s$, the map given in (\ref{main-map}) is injective.

Suppose that $Lef_k[\gamma]=[\eta\wedge L^{n-k} \alpha]=0$, where $\alpha \in \mathcal{P}_{bas}^k(M)$ such that $d\alpha=0$,  $i_*[\alpha]_B=[\gamma]$.
Since the group homomorphism $j_{2n+1-k}:H^{2n+1-k}(M,\R)\rightarrow H_B^{2n-k}(M,\R)$ is induced by $\iota_{\xi}$, it follows that
\[0=j_{2n+1-k}(0)=j_{2n+1-k}([\eta\wedge (L^{n-k} \alpha)])=[ L^{n-k} \alpha]_B.\]

Since $M$ has the transverse $s$-Lefschetz property, and since $0\leq k\leq s$, $[\alpha]_B=0$. Thus $[\gamma]=i_*([\alpha]_B)=0$.

{\bf Step 2.}\, Assume that $M$ satisfies the $s$-Lefschetz property, i.e., , for any $0\leq k\leq s$,
\[ \mathcal{R}_{Lef_k}=\{([\beta],[\eta\wedge L^{n-k}\beta])\,\vert \iota_{\xi}\beta=0, d\beta=0, L^{n-k+1}\beta=0\}\]
is the graph of an isomorphism $Lef_k: H^k(M)\rightarrow H^{2n-k+1}(M)$. In particular, it implies that
 if $\alpha$ is a closed primitive basic $k$-form, $0\leq k\leq s$, then $Lef_k([\alpha])=[\eta\wedge L^{n-k}\alpha]$.

We first claim that if $[\alpha]_B\in PH^k_B(M,\R)$ such that  $L^{n-k}[\alpha]_B=0\in H^{2n-k}_B(M,\R)$, then $[\alpha]_B\in \text{im}\,L$. By Remark \ref{harmonic-primitive-class}, we may assume that $\alpha$ is a closed primitive basic $k$-form. Then for any closed primitive basic $k$-form $\beta$,
\[j_{2n+1}\left(Lef_k(i_*[\alpha]_B\cup i_*[\beta]_B)\right)= j_{2n+1}([\eta\wedge L^{n-k}\alpha \wedge \beta])= L^{n-k}[\alpha]_B\cup [\beta]_B=0.\]

We observe that the map $j_{2n+1}: H^{2n+1}(M,\R)\rightarrow H_B^{2n}(M,\R)$ is an isomorphism.
Indeed, it is an immediate consequence of the exactness of the sequence (\ref{Gysin}) at stage $2n+1$. Since $H_B^{i}(M,\R)=0$ when $i\geq 2n+1$, we have that
\begin{equation}\label{Gysin-final-stage}\cdots \rightarrow 0 \xrightarrow{i_*} H^{2n+1}(M, \R)\xrightarrow{j_{2n+1}} H^{2n}_B(M,\R)\xrightarrow{\wedge[\omega]} 0\rightarrow  \cdots
\end{equation}

As a result,  $Lef_k(i_*[\alpha]_B) \cup i_*[\beta]_B=0$.  Since $\beta$ is arbitrarily chosen, by the Poincar\'e duality, we must have $Lef_k[i_*[\alpha]_B)=0$. Since $Lef_k$ is an isomorphism, $i_*[\alpha]_B=0$. By the exactness of the sequence (\ref{Gysin}), $[\alpha]_B=L[\lambda]_B$ for some $[\lambda]_B\in H_B^{k-2}(M,\R)$. This proves our claim.

Now we show that for any $0\leq k\leq s$, the map
\begin{equation}\label{induction}  L^{n-k}: H^k_B(M)\rightarrow H_B^{2n-k}(M),\,\,\,[\alpha]_B\mapsto [\omega^{n-k}\wedge \alpha]_B\end{equation} is an isomorphism by induction on $k$. By Part 2) in Proposition \ref{exact-sequence}, it suffices to show that for any $0\leq k\leq s$, the map
(\ref{induction}) is injective.

Note that when $k=0,1$, for degree reasons, every closed basic $k$-form is primitive; furthermore, $\text{im}\, L\cap \Omega^k_{bas}(M)=\{0\}$. As a result, when $k=0,1$, the injectivity of the map (\ref{induction}) is a simple consequence of the claim we established above.

Assume that $M$ satisfies the transverse $p$-Lefschetz property for $p<k$. Then it follows from Theorem \ref{primitive-decomposition} that $H^k_B(M,\R)=PH^k_B(M,\R)\oplus\text{im}\,L$. Suppose that $L^{n-k}([\alpha]_B+L[\sigma]_B)=0$, where $[\alpha]_B\in PH_B^{k}(M,\R)$ and $[\sigma]\in H_B^{k-2}(M,\R)$. Then we must have
$L^{n-k+1}([\alpha]_B+L[\sigma]_B)=L^{n-k+2}[\sigma]_B=0$ since $[\alpha]_B\in PH^k_B(M)$. It follows from our inductive hypothesis again that
$[\sigma]_B=0$. As a result, $L^{n-k}[\alpha]_B=0$. By the claim we established earlier, we must have that $[\alpha]_B=L[\beta]_B$ for some $[\beta]_B\in H^{k-2}_B(M,\R)$. However, by Theorem \ref{primitive-decomposition}, $PH^k_B(M,\R)\cap \text{im}L=0$. Therefore we must have $[\alpha]_B=0$. This completes the proof of Theorem \ref{main-result1}.

\end{proof}


\section{Cup length of Lefschetz $K$-contact manifolds}\label{cup-length}

In this section, we show that for compact $K$-contact manifolds,  the weak Lefschetz condition implies a fairly general result on the vanishing of cup products.


\begin{theorem}\label{vanishing-cup-prod}Let $(M,\eta)$ be a $2n + 1$ dimensional compact $K$-contact manifold that satisfies the $s$-Lefschetz property, where $0\leq s\leq n-1$, and let $y_i\in H^{k_i}(M,\R)$,$1\leq i\leq p$. If $\,\forall\, 1\leq i \leq p$, $1\leq k_i\leq s+1$, and if $k_1+\cdots+k_p\geq 2n-s$, then the cup product \begin{equation} \label{vanishing-cup-eq}y_1\cup\cdots\cup y_p=0.\end{equation}
 \end{theorem}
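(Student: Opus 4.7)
The plan is to represent each class $y_i$ by a closed primitive basic form via Lemma \ref{tech-lemma1}, take the wedge product, and then combine the transverse $s$-Lefschetz property with the Gysin long exact sequence (\ref{Gysin}) to conclude that the resulting basic cohomology class lies in the kernel of $i_*$.

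By Theorem \ref{main-result1}, $M$ satisfies the transverse $s$-Lefschetz property. Since $1 \leq k_i \leq s+1$ for each $i$, Lemma \ref{tech-lemma1} supplies closed primitive basic forms $\alpha_i \in \mathcal{P}^{k_i}_{bas}(M)$ with $i_*([\alpha_i]_B) = y_i$. Set $\alpha = \alpha_1 \wedge \cdots \wedge \alpha_p$ and $K = k_1 + \cdots + k_p$. Then $\alpha \in \Omega^K_{bas}(M)$ is $d$-closed, and since $i_*$ is induced by the inclusion of the basic de Rham subcomplex into the full de Rham complex (and cup products on both sides are realized by the wedge), we have $y_1 \cup \cdots \cup y_p = i_*([\alpha]_B)$.

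It thus suffices to show that $[\alpha]_B \in L\bigl(H^{K-2}_B(M,\R)\bigr)$: the exactness of (\ref{Gysin}) at $H^K_B$ will then force $i_*([\alpha]_B) = 0$. I split into two cases according to the size of $K$. If $K > 2n$, then $\Omega^K_{bas}(M) = 0$ since the Reeb foliation has codimension $2n$, so $\alpha = 0$ and the assertion is trivial. If $2n - s \leq K \leq 2n$, set $r = 2n - K$, so that $0 \leq r \leq s$. By the transverse $s$-Lefschetz property, the Lefschetz map
\[
L^{n-r} : H^r_B(M,\R) \longrightarrow H^{2n-r}_B(M,\R) = H^K_B(M,\R)
\]
is an isomorphism, so $[\alpha]_B = L^{n-r}([\gamma]_B)$ for some $[\gamma]_B \in H^r_B(M,\R)$. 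Because $s \leq n-1$, we have $n - r \geq n - s \geq 1$, and hence $[\alpha]_B = L\bigl(L^{n-r-1}[\gamma]_B\bigr) \in L\bigl(H^{K-2}_B(M,\R)\bigr)$, as required.

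I do not anticipate a serious obstacle: once Lemma \ref{tech-lemma1} and Theorem \ref{main-result1} have been invoked, the argument is essentially a one-line application of the transverse Hard Lefschetz isomorphism in the relevant degree range plus exactness of the Gysin sequence. The only point deserving care is the compatibility of $i_*$ with cup products, and this is automatic.
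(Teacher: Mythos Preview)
Your proof is correct and follows essentially the same route as the paper: lift each $y_i$ to basic cohomology via Lemma \ref{tech-lemma1}, use Theorem \ref{main-result1} to invoke the transverse $s$-Lefschetz isomorphism in degree $K\geq 2n-s$ and write $[\alpha]_B$ as a positive power of $L$ applied to a lower-degree class, and then kill it with the exactness of the Gysin sequence (\ref{Gysin}). The only cosmetic differences are that the paper does not bother choosing the $\alpha_i$ primitive and leaves the case $K>2n$ implicit.
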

\begin{proof}  $\forall\, 1\leq i\leq p$, since $1\leq k_i\leq s+1$,  by Lemma \ref{tech-lemma1} there exists $[\alpha_i]_B \in H_B^{k_i}(M)$,  such that $i_*([\alpha_i]_B)=y_i$.  By assumption, $M$ satisfies the $s$-Lefschetz property.  It follows from Theorem \ref{main-result1}, it must satisfy the transverse $s$-Lefschetz property as well. Now that $k_1+\cdots +k_p\geq 2n-s$, there exists $ [\beta]_B \in H_B^{2n-k_1-\cdots-k_p}(M)$, such that
\[ [\alpha_1 \wedge \cdots \wedge \alpha_p]_B= L^{k_1+\cdots +k_p-n}([\beta]_B).\]
Clearly, we have that $k_1+\cdots+k_p-n\geq n-s\geq 1$. It follows immediately from the exactness of the Sequence (\ref{exact-sequence}) that
\[ \begin{split} y_1\cup \cdots \cup y_p&=i_*([\alpha_1]_B)\cup \cdots \cup i_*([\alpha_p]_B)
\\&=i_*([\alpha_1\wedge \cdots \wedge \alpha_p]_B)\\&=i_*(L^{k_1+\cdots k_p-n}([\beta]_B))=0.\end{split}\]

\end{proof}

The following result is an immediate consequence of Theorem \ref{vanishing-cup-prod}.

\begin{theorem}\label{cup-length-lefschetz} Let $M$ be  a $2n+1$ dimensional compact Lefschetz $K$-contact manifold that satisfies the $s$-Lefschetz property, where $0\leq s\leq n-1$. Then the cup length of $M$ is $\leq 2n-s$. 

\end{theorem}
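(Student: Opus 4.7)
The plan is to derive Theorem \ref{cup-length-lefschetz} as a direct consequence of Theorem \ref{vanishing-cup-prod} combined with a short pigeonhole-style degree argument. Recall that asserting $\operatorname{cuplength}(M) \leq 2n-s$ amounts to showing that any cup product $y_1 \cup \cdots \cup y_p$ of positive-degree classes with $p = 2n-s+1$ must vanish. So I fix $y_i \in H^{k_i}(M,\R)$ with $k_i \geq 1$ for $1 \leq i \leq p = 2n-s+1$, and aim to prove $y_1 \cup \cdots \cup y_p = 0$.

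First I dispose of the trivial case: if $k_1 + \cdots + k_p > 2n+1 = \dim M$, then the cup product vanishes for degree reasons. So from now on I may assume $\sum_i k_i \leq 2n+1$.

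The crux of the proof is then the following observation: under the joint constraints $\sum_i k_i \leq 2n+1$, $k_i \geq 1$, and $p = 2n-s+1$, every individual degree $k_i$ must satisfy $1 \leq k_i \leq s+1$. Indeed, suppose for contradiction that some $k_j \geq s+2$. Then
\[
\sum_{i \neq j} k_i \;\leq\; (2n+1) - (s+2) \;=\; 2n-s-1,
\]
yet since each remaining $k_i \geq 1$ and there are $p-1 = 2n-s$ of them,
\[
\sum_{i \neq j} k_i \;\geq\; 2n-s,
\]
a contradiction. So all of $k_1,\ldots,k_p$ lie in $[1,s+1]$, and moreover $\sum_i k_i \geq p = 2n-s+1 > 2n-s$.

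All hypotheses of Theorem \ref{vanishing-cup-prod} are now met, so $y_1 \cup \cdots \cup y_p = 0$, giving the claimed bound on the cup length. There is no genuine obstacle here: the serious content lies upstream, in Theorem \ref{vanishing-cup-prod} (which itself invokes the equivalence of the $s$-Lefschetz and transverse $s$-Lefschetz conditions from Theorem \ref{main-result1}, together with the Gysin-type sequence (\ref{Gysin}) and the transverse Lefschetz decomposition). The only new ingredient required at this step is the elementary degree book-keeping above forcing each $k_i \leq s+1$.
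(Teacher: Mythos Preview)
Your proof is correct and follows essentially the same approach as the paper: both reduce to Theorem \ref{vanishing-cup-prod} after a short degree argument showing that if no single $k_i$ exceeds $s+1$ then the theorem applies directly, while if some $k_i \geq s+2$ the total degree is forced above $2n+1$. The only cosmetic difference is that the paper splits into cases on whether some $k_i > s+1$, whereas you first assume $\sum k_i \leq 2n+1$ and then derive the bound $k_i \leq s+1$ by contradiction; the underlying arithmetic is identical.
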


\begin{proof} Let $y_i \in H^{k_i}(M)$, $1\leq i\leq p$.  To establish Theorem \ref{cup-length-lefschetz}, it suffices to show that if $p>2n-s$, and if $k_i\geq 1$, then (\ref{vanishing-cup-eq}) holds.  Indeed, if there is a cohomology class, say $y_1$, such that
$k_1>s+1$, then we have that
\[ k_1+\cdots+k_p> s+1+ (2n-s)= 2n+1.\]
So in this case (\ref{vanishing-cup-eq}) holds for degree reasons. Therefore we may assume that $1\leq k_i\leq s+1$, $\forall\, 1\leq i\leq p$. Since $k_1+\cdots+k_p\geq p> 2n-s$,  in this case (\ref{vanishing-cup-eq}) follows directly from Theorem \ref{vanishing-cup-prod}.

\end{proof}

Since  by the Poincar\'e duality, any compact connected $K$-contact manifold is $0$-Lefschetz, Theorem \ref{cup-length-lefschetz} immediately implies the following result of Boyer and Galicki \cite[Theorem 7.4.1]{BG08}.

\begin{corollary}\label{cup-length-k-contact} The cup length of a $2n+1$ dimensional compact  connected $K$-contact manifold is at most $2n$. \end{corollary}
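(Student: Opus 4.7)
The plan is to apply Theorem \ref{cup-length-lefschetz} with $s = 0$, which immediately gives the cup length bound $\leq 2n - 0 = 2n$, provided we can verify that every compact connected $K$-contact manifold of dimension $2n+1$ automatically satisfies the $0$-Lefschetz property as formulated in Definition \ref{HLP-contactv2}. So the only real work is checking this hypothesis.

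To verify the $0$-Lefschetz condition, I would unpack Definition \ref{HLP-contactv2} at $k = 0$. The Lefschetz relation
\[ \mathcal{R}_{Lef_0} = \{([\beta], [\eta \wedge L^n \beta]) \mid \iota_\xi \beta = 0,\ d\beta = 0,\ L^{n+1}\beta = 0\} \]
consists of pairs where $\beta$ is a basic closed $0$-form. Since $M$ is connected, such a $\beta$ is a real constant $c$, and the condition $L^{n+1}\beta = 0$ is automatic for degree reasons (as $(d\eta)^{n+1}$ is a form of degree $2n+2$ on a $(2n+1)$-manifold). So the relation is precisely the graph of the linear map $H^0(M,\R) \to H^{2n+1}(M,\R)$ sending $c \mapsto c\,[\eta \wedge (d\eta)^n]$.

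The next step is to argue that this map is an isomorphism. Both $H^0(M,\R)$ and $H^{2n+1}(M,\R)$ are one-dimensional by connectedness, compactness, and orientability (a contact manifold is orientable, since $\eta \wedge (d\eta)^n$ is a nowhere-vanishing top-degree form). Because $\eta \wedge (d\eta)^n$ is a volume form, the class $[\eta \wedge (d\eta)^n]$ is nonzero in $H^{2n+1}(M,\R)$ by Stokes/Poincaré duality, so the map $c \mapsto c\,[\eta \wedge (d\eta)^n]$ is an isomorphism. This establishes the $0$-Lefschetz property.

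With the hypothesis verified, Theorem \ref{cup-length-lefschetz} applied to $s = 0$ yields cup length $\leq 2n$, which is exactly Corollary \ref{cup-length-k-contact}. There is really no obstacle here beyond recognizing that the $s$-Lefschetz condition collapses in degree zero to the statement that $[\eta \wedge (d\eta)^n]$ represents a generator of top cohomology, a fact guaranteed by the contact structure itself. The strength of the result lies entirely in Theorem \ref{cup-length-lefschetz}; this corollary is essentially the base case of that theorem.
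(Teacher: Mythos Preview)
Your proof is correct and follows the same route as the paper: apply Theorem~\ref{cup-length-lefschetz} with $s=0$ after noting that any compact connected $K$-contact manifold is automatically $0$-Lefschetz. The paper justifies the $0$-Lefschetz property by a one-line appeal to Poincar\'e duality (already noted in the remark after Definition~\ref{HLP-contactv2}), whereas you unpack the definition explicitly and observe that $[\eta\wedge(d\eta)^n]$ generates $H^{2n+1}(M,\R)$; this is the same argument spelled out.
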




On the other hand, by Theorem \ref{trans-Kahler}, any $2n+1$ dimensional compact Sasakian manifold satisfies the transverse hard Lefschetz property. Theorem \ref{cup-length-lefschetz} gives us the following upper bound for the cup length of a compact Sasakian manifold.

\begin{corollary}\label{cup-length-sasakian} The cup length of a $2n+1$ dimensional compact Sasakian manifold is at most $n+1$.

\end{corollary}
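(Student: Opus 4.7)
The plan is to chain together three results already established in the paper. First, by Theorem \ref{trans-Kahler} of El Kacimi-Alaoui, any compact Sasakian manifold satisfies the transverse Hard Lefschetz property, i.e., for every $0 \leq k \leq n$ the Lefschetz map $L^{n-k}: H^k_B(M,\R) \to H^{2n-k}_B(M,\R)$ is an isomorphism. In particular, restricting to $0 \leq k \leq n-1$, this says that $M$ satisfies the transverse $s$-Lefschetz property in the sense of Definition \ref{weak-lefschetz-property} for $s = n-1$.

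Next, I would invoke Theorem \ref{main-result1}, which identifies the transverse $s$-Lefschetz property on the basic cohomology of a compact $K$-contact manifold with the $s$-Lefschetz property on its de Rham cohomology as in Definition \ref{HLP-contactv2}. Applied to our Sasakian manifold with $s = n-1$, this yields that $M$ is an $(n-1)$-Lefschetz compact $K$-contact manifold.

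Finally, I would apply Theorem \ref{cup-length-lefschetz} with $s = n-1$ to conclude that the cup length of $M$ is at most $2n - (n-1) = n+1$. Since each of the three inputs is already proved in the preceding sections, there is no real obstacle here; the corollary is essentially the combination of the sharpest available Lefschetz property for Sasakian manifolds (coming from transverse K\"ahler Hodge theory) with the general cup-length bound established for $s$-Lefschetz $K$-contact manifolds.
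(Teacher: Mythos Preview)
Your proposal is correct and follows essentially the same approach as the paper: the paper simply notes that by Theorem \ref{trans-Kahler} a compact Sasakian manifold satisfies the transverse Hard Lefschetz property and then invokes Theorem \ref{cup-length-lefschetz} with $s=n-1$. You have merely made explicit the intermediate application of Theorem \ref{main-result1} to pass from the transverse $(n-1)$-Lefschetz property to the $(n-1)$-Lefschetz property on de Rham cohomology, which the paper leaves implicit since the equivalence has already been established.
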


\section{Boothby-Wang fibration over weakly Lefschetz symplectic manifolds}\label{main-examples}

In this section, we apply the main result obtained in Section \ref{Kcontact-s-lefschetz} to a Boothy-Wang fibration, and use it to construct examples of $K$-contact manifolds without any Sasakian structures in dimension $\geq 9$.

We first briefly review Boothby-Wang construction here, and refer to \cite{B76} for more details.  A co-oriented contact structure on a $2n+1$ dimensional compact manifold $P$ is said to be regular if it is given as the kernel of a contact one form $\eta$, whose Reeb field $\xi$ generates a free effective $S^1$ action on $P$. Under this assumption,  $P$ is the total space of a principal circle bundle $ \pi: P\rightarrow M:=P/S^1$, and the base manifold $M$ is equipped with an integral symplectic form $\omega$ such that $\pi^* \omega =d\eta$.  Conversely, let $(M,\omega)$ be a compact symplectic manifold with an integral symplectic form $\omega$, and let $\pi:P\rightarrow M$ be the principal circle bundle over $M$ with Euler class $[\omega]$ and a connection one form $\eta$ such that $\pi^*\omega=d\eta$. Then $\eta$ is a contact one form on $P$ whose characteristic Reeb vector field generates the right translations of the structure group $S^1$ of this bundle.  It is easy to deduce the following result as a direct consequence of Theorem \ref{main-result1}.

\begin{theorem} \label{regular-Lef-contact}Let $\pi: P\rightarrow M$ be a Boothby-Wang fibration as we described above. Then $(P,\eta)$ satisfies the $s$-Lefschetz property if and only if the base symplectic manifold $(M,\omega)$ satisfies the $s$-Lefschetz property.
\end{theorem}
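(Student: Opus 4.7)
The plan is to reduce the statement to a comparison between the basic cohomology of the foliated manifold $(P,\mathcal{F}_{\xi},d\eta)$ and the de Rham cohomology of the base $(M,\omega)$, and then invoke Theorem \ref{main-result1}. First, by Theorem \ref{main-result1}, the $s$-Lefschetz property of the $K$-contact manifold $(P,\eta)$ (in the sense of Definition \ref{HLP-contactv2}) is equivalent to the transverse $s$-Lefschetz property of the transversely symplectic foliation $(P,\mathcal{F}_{\xi},d\eta)$ (in the sense of Definition \ref{weak-lefschetz-property}). Thus it suffices to show that the transverse $s$-Lefschetz property of $(P,\mathcal{F}_{\xi},d\eta)$ is equivalent to the $s$-Lefschetz property of $(M,\omega)$ in the usual symplectic sense.

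The key technical step is to establish a natural isomorphism $\pi^{*}\colon H^{k}(M,\R) \xrightarrow{\cong} H^{k}_{B}(P,\R)$ that intertwines the two Lefschetz maps. Since the $S^{1}$-action generated by $\xi$ is free and effective, every orbit is a principal orbit; locally, one can choose equivariant trivializations of $\pi\colon P\rightarrow M$ of the form $U\times S^{1}$. In such a trivialization a differential form on $P$ is basic with respect to $\mathcal{F}_{\xi}$ (i.e.\ $S^{1}$-invariant and horizontal for $\xi$) if and only if it is the pullback of a form on the $U$-factor. Hence $\pi^{*}$ induces an isomorphism of complexes $(\Omega^{*}(M),d)\cong (\Omega^{*}_{\mathrm{bas}}(P),d)$, which descends to an isomorphism on cohomology. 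Moreover, by construction $\pi^{*}\omega=d\eta$, so under this identification the symplectic Lefschetz map $L^{n-k}\colon H^{k}(M,\R)\to H^{2n-k}(M,\R)$ corresponds exactly to the transverse Lefschetz map $L^{n-k}\colon H^{k}_{B}(P,\R)\to H^{2n-k}_{B}(P,\R)$ of Definition \ref{weak-lefschetz-property}.

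Combining these two observations, for each $0\leq k\leq s$ the base Lefschetz map is an isomorphism if and only if the transverse Lefschetz map is an isomorphism, so the base $s$-Lefschetz condition is equivalent to the transverse $s$-Lefschetz condition. Together with the first step, this gives the desired equivalence.

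I expect the main (though still routine) obstacle to be the verification that $\pi^{*}$ identifies $\Omega^{*}(M)$ with $\Omega^{*}_{\mathrm{bas}}(P)$ in a way compatible with $d$; this is a standard fact for principal $S^{1}$-bundles but deserves an explicit citation or short argument. Since for compact symplectic manifolds the surjective and bijective versions of the $s$-Lefschetz property coincide via Poincar\'e duality, there is no discrepancy between Definitions \ref{weak-lefschetz-property} and the classical notion of \cite{FMU04} on the base side, so no additional care is needed there.
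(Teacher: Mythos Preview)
Your proposal is correct and follows essentially the same approach as the paper, which simply states that the result is ``a direct consequence of Theorem \ref{main-result1}'' without further elaboration. You have supplied precisely the details the paper omits: the identification $\pi^{*}\colon \Omega^{*}(M)\xrightarrow{\cong}\Omega^{*}_{\mathrm{bas}}(P)$ intertwining the Lefschetz operators, together with the observation that surjectivity and bijectivity of the Lefschetz maps coincide on the compact base by Poincar\'e duality.
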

Next, we recall an useful result \cite{Ha13} on when  the total space of a Boothby-Wang fibration is simply-connected. Let $X$ be a compact and oriented manifold of dimension $m$. We say that $c \in H^2(X,\Z)$ is indivisible if the map
\[ c \cup : H^{m-2}(X,\Z)\rightarrow H^m(X,\Z)\] is surjective.
\begin{lemma} \label{boothby-wang-l1} (\cite[Lemma 15]{Ha13}) Let $\pi: P\rightarrow M$ be a Boothby-Wang fibration, and let $\omega$ be an integral symplectic form on $M$ which represents the Euler class of the Boothby-Wang fibration. Then $P$ is simply-connected if and only if $M$ is simply-connected, and the Euler class $[\omega]$ is indivisible.\end{lemma}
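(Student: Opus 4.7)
The natural approach is to analyze the long exact sequence of homotopy groups associated to the principal circle bundle $S^{1}\hookrightarrow P\xrightarrow{\pi}M$. Since $\pi_{0}(S^{1})=0$ and $\pi_{1}(S^{1})\cong\Z$, the relevant portion of this sequence is
\[
\pi_{2}(S^{1})\longrightarrow\pi_{2}(P)\longrightarrow\pi_{2}(M)\xrightarrow{\ \partial\ }\pi_{1}(S^{1})\longrightarrow\pi_{1}(P)\longrightarrow\pi_{1}(M)\longrightarrow 0.
\]
Exactness immediately shows that $\pi_{1}(P)=0$ if and only if $\pi_{1}(M)=0$ and the connecting homomorphism $\partial:\pi_{2}(M)\to\pi_{1}(S^{1})\cong\Z$ is surjective. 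So the lemma reduces to identifying this surjectivity with the indivisibility of $[\omega]$.

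Under the assumption that $M$ is simply-connected, the Hurewicz map $h:\pi_{2}(M)\to H_{2}(M,\Z)$ is an isomorphism. The standard identification of the connecting map for a principal $S^{1}$-bundle with the Euler class (see, e.g., any account of classifying spaces) asserts that $\partial$ factors as
\[
\pi_{2}(M)\xrightarrow{\ h\ }H_{2}(M,\Z)\xrightarrow{\ \langle[\omega],\,\cdot\,\rangle\ }\Z,
\]
because the Euler class of $\pi:P\to M$ is represented by $[\omega]$. Hence $\partial$ is surjective if and only if evaluation of $[\omega]$ on $H_{2}(M,\Z)$ hits $1\in\Z$, i.e., there exists a homology class $\gamma\in H_{2}(M,\Z)$ with $\langle[\omega],\gamma\rangle=1$.

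It remains to match this condition with the definition of indivisibility stated just before the lemma. For the compact oriented manifold $M$ of dimension $m$, Poincar\'e duality identifies the cup-product pairing $H^{2}(M,\Z)\otimes H^{m-2}(M,\Z)\to H^{m}(M,\Z)\cong\Z$ with the Kronecker pairing $H^{2}(M,\Z)\otimes H_{2}(M,\Z)\to\Z$ via capping with the fundamental class. Under this identification, the map $[\omega]\cup:H^{m-2}(M,\Z)\to H^{m}(M,\Z)$ is surjective exactly when the functional $\langle[\omega],\,\cdot\,\rangle:H_{2}(M,\Z)\to\Z$ is surjective. Combining the three steps gives the equivalence asserted in the lemma.

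The only slightly delicate point is the identification of $\partial$ with evaluation against the Euler class; I would argue it by pulling back a representative $f:S^{2}\to M$ of a class in $\pi_{2}(M)$ to $S^{2}$, observing that $f^{*}P\to S^{2}$ is an $S^{1}$-bundle whose Euler number is both $\langle[\omega],h(f)\rangle$ and $\partial[f]\in\pi_{1}(S^{1})$. Everything else is bookkeeping with the two long exact sequences and Poincar\'e duality.
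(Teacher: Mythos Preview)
The paper does not supply its own proof of this lemma; it is simply quoted from \cite[Lemma 15]{Ha13}. Your argument via the homotopy long exact sequence of the fibration $S^{1}\hookrightarrow P\to M$, the Hurewicz isomorphism, and the identification of the boundary map with evaluation of the Euler class is correct and is in fact the standard proof (and essentially the one given in \cite{Ha13}). The step translating surjectivity of $\langle[\omega],\cdot\rangle:H_{2}(M,\Z)\to\Z$ into surjectivity of $[\omega]\cup:H^{m-2}(M,\Z)\to H^{m}(M,\Z)$ via Poincar\'e duality is also fine, since $M$ is closed and oriented.
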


We also need the following result concerning the existence of symplectic manifolds which are $s$-Lefschetz but not $(s+1)$-Lefschetz proved in \cite[Prop. 5.2]{FMU07}.

\begin{theorem} \label{example-weak-lef}Let $s \geq 2$ be an even integer. Then there is a simply-connected symplectic $(W_s,\omega)$ of dimension $2(s+2)$ which is $s$-Lefschetz but not $(s+1)$-Lefschetz. Moreover, the symplectic form $\omega$ is integral, and $b_{s+1}(W_s)=3$.
\end{theorem}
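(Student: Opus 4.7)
The plan is to build the example in two stages: first a nilmanifold model that realizes the exact Lefschetz failure, then a symplectic surgery that removes the fundamental group.  Concretely, I would start by searching for a $2(s+2)$-dimensional nilpotent Lie algebra $\mathfrak{h}_s$ equipped with a closed non-degenerate element $\omega_0\in\Lambda^2\mathfrak{h}_s^*$.  Using the Chevalley--Eilenberg complex as an explicit finite-dimensional model for the cohomology, I would tune the structure constants of $\mathfrak{h}_s$ so that the Lefschetz maps $L^k$ on $H^*(\mathfrak{h}_s)$ are surjective for $0\le k\le s$, fail at $k=s+1$, and produce $b_{s+1}=3$.  Malcev's theorem then gives a cocompact lattice $\Gamma$ inside the associated simply-connected nilpotent Lie group $H_s$, and Nomizu's theorem identifies the de Rham cohomology of the nilmanifold $N_s:=\Gamma\backslash H_s$ with $H^*(\mathfrak{h}_s)$, so the Lefschetz behavior survives.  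Choosing $[\omega_0]$ in the rational lattice of $\Lambda^2\mathfrak{h}_s^*$ from the outset guarantees integrality after passage to $N_s$.

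The second stage is to replace $N_s$ with a simply-connected $W_s$ of the same dimension while preserving the relevant cohomological data.  I would apply a Gompf symplectic fiber sum: pick a codimension-two symplectic submanifold $V\subset N_s$ whose image in $\pi_1(N_s)$ is a generating set, and pair it with an auxiliary simply-connected symplectic manifold $Y$ containing a symplectic copy of $V$ with opposite normal orientation, forming $W_s:=N_s\#_V Y$.  A van Kampen computation kills $\pi_1(N_s)$, and $W_s$ is symplectic with integral class inherited from $[\omega_0]$ and a compatible integral form on $Y$.

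Verification would proceed via the Mayer--Vietoris sequence of the fiber sum decomposition.  One must check, cohomology class by class, that (a) surjectivity of $L^k:H^{(s+2)-k}(W_s)\to H^{(s+2)+k}(W_s)$ persists for $0\le k\le s$, (b) the class in $H^*(N_s)$ obstructing $L^{s+1}$-surjectivity lifts to a class in $H^*(W_s)$ that still obstructs the corresponding map, and (c) the Betti number count $b_{s+1}(W_s)=3$ follows once the contributions of $Y$ in that degree are accounted for.

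The main obstacle is the delicate compatibility required in stage two: the auxiliary pair $(Y,V)$ must be chosen so that its cohomological contributions neither fill in the missing image of $L^{s+1}$ nor destroy the lower surjectivities.  Getting this right is essentially where the technical heart of the argument lies; I expect it to force $\mathfrak{h}_s$, $V$, and $Y$ to be constructed inductively in $s$, with the parity hypothesis $s$ even showing up as a sign or symmetry condition that makes the primitive obstructing class in $H^{(s+2)-(s+1)}(\mathfrak{h}_s)$ both exist and survive the fiber sum.
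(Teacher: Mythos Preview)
The paper does not prove this theorem at all: it is quoted verbatim as \cite[Prop.~5.2]{FMU07}, with only a short remark explaining that the integrality of $\omega$ follows from \cite[Theorem~4.2]{FMU07} and that $b_{s+1}(W_s)=3$ comes from $b_{s+3}(W_s)=3$ via Poincar\'e duality. So there is nothing here to compare your argument against beyond a citation.

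That said, your outline is broadly the strategy of \cite{FMU07} itself: start from an explicit nilmanifold (or iterated construction built from one) with the prescribed Lefschetz defect, then perform a Gompf symplectic sum to kill the fundamental group while controlling the cohomology. Where your proposal diverges from what would actually be accepted as a proof is that you describe a \emph{search} for a suitable nilpotent Lie algebra $\mathfrak{h}_s$ and a suitable auxiliary pair $(Y,V)$, rather than exhibiting them. In \cite{FMU07} the nilmanifolds $M_s$ are produced by a concrete inductive recipe (not an existence argument), and the symplectic sum is taken along a specific symplectic torus against a specific partner; the Mayer--Vietoris bookkeeping is then a computation, not a compatibility condition to be hoped for. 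Your own final paragraph correctly identifies this as the crux, but as written the proposal is a plan of attack rather than a proof: the steps ``tune the structure constants so that\ldots'' and ``choose $(Y,V)$ so that the contributions neither fill in\ldots\ nor destroy\ldots'' are exactly the content that needs to be supplied. If your goal is to reproduce the result rather than cite it, you should either carry out the explicit construction of \cite{FMU07} or give a genuinely new one; the present sketch does neither.
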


\begin{remark} By \cite[Theorem 4.2]{FMU07}, the symplectic form on  $M_s$ constructed in \cite[Prop. 5.1]{FMU07} can chosen to be integral. A careful reading of the proof of \cite[Prop.5.2]{FMU07} shows that the symplectic form on $W_s$ can also chosen to be integral; moreover, $b_{s+3}(W_s)=3$. Thus by the Poincar\'e dulaity, we have that $b_{s+1}(W_s)=3$.
\end{remark}

We are ready to prove the main result of Section \ref{main-examples}.

\begin{theorem}\label{high-dim-example-1} For any even integer $s\geq 2$, there exists a $2s+5$ dimensional simply-connected compact $K$-contact manifold $(M,\eta)$, such that $(M,\eta)$ is $s$-Lefschetz but not $(s+1)$-Lefschetz, and such that $b_{s+1}(M)\leq 3$ is odd. In particular, $M$ does not support any Sasakian structure.
\end{theorem}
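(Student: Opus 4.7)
The plan is to construct $M$ as the total space of a Boothby--Wang fibration over the weakly Lefschetz symplectic manifold supplied by Theorem \ref{example-weak-lef}. Starting from $(W_s,\omega)$ of dimension $2(s+2)$ with $\omega$ integral, $W_s$ simply-connected, $s$-Lefschetz but not $(s+1)$-Lefschetz, and $b_{s+1}(W_s)=3$, I would first arrange that $[\omega]\in H^2(W_s,\mathbf{Z})$ is indivisible, either by a small perturbation inside the open cone of symplectic cohomology classes or, if $[\omega]=k\beta$ for some primitive integral $\beta$, by replacing $\omega$ with a symplectic representative of $\beta$ (the Lefschetz properties depend only on the ray of the cohomology class, so they are preserved). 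Let $\pi:M\to W_s$ be the resulting principal circle bundle with a connection one-form $\eta$ satisfying $d\eta=\pi^*\omega$. Then $(M,\eta)$ is a $(2s+5)$-dimensional regular $K$-contact manifold, and by Lemma \ref{boothby-wang-l1} $M$ is simply-connected.

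The Lefschetz behavior of $(M,\eta)$ is immediate from Theorem \ref{regular-Lef-contact}: $(M,\eta)$ satisfies the $s$-Lefschetz property but not the $(s+1)$-Lefschetz property. To compute $b_{s+1}(M)$ I would invoke the long exact sequence of Proposition \ref{exact-sequence}. Since the Reeb flow integrates to the free $S^1$-action, the basic cohomology of $M$ is canonically identified with $H^*(W_s,\mathbf{R})$, and the relevant portion of the sequence reads
\[
H^{s-1}(W_s)\xrightarrow{\,\cup[\omega]\,} H^{s+1}(W_s)\longrightarrow H^{s+1}(M)\longrightarrow H^{s}(W_s)\xrightarrow{\,\cup[\omega]\,} H^{s+2}(W_s).
\]
Using the Lefschetz decomposition of Theorem \ref{primitive-decomposition} (applicable in degrees $\leq s+2$ by the $s$-Lefschetz hypothesis), the cokernel of $\cup[\omega]$ on $H^{s-1}(W_s)$ is exactly the primitive part $PH^{s+1}(W_s)$, while the kernel of $\cup[\omega]$ on $H^{s}(W_s)$ vanishes: each summand $L^r\beta_r$ with $\beta_r\in PH^{s-2r}(W_s)$ is sent into the distinct summand $L^{r+1}PH^{s-2r}$ of $H^{s+2}(W_s)$, and is nonzero since with $n=s+2$ we have $r+1\leq n-(s-2r)=2r+2$. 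Hence $b_{s+1}(M)=\dim PH^{s+1}(W_s)\leq b_{s+1}(W_s)=3$.

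The parity assertion reduces to showing $\dim PH^{s+1}(W_s)$ is odd. The Lefschetz decomposition yields $b_{s+1}(W_s)=\sum_{r\geq 0} p_{s+1-2r}(W_s)$ with $p_j=\dim PH^j(W_s)$, so I would appeal to the explicit Fern\'andez--Mu\~noz--Ugarte construction of $W_s$ in \cite{FMU07}, where $W_s$ is sufficiently connected that the lower odd-degree primitive Betti numbers $p_1,p_3,\dots,p_{s-1}$ vanish, forcing $p_{s+1}=3$ and hence $b_{s+1}(M)=3$. To conclude the nonexistence of a Sasakian structure I would then invoke the classical constraint that on a compact Sasakian manifold of dimension $2n+1$ the odd Betti numbers $b_{2k+1}$ with $0\leq k\leq n-1$ are even. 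Here $\dim M=2s+5$ and $n=s+2$; since $s$ is even, $s+1=2(s/2)+1$ is odd with $s/2\leq n-1$, so a Sasakian structure on $M$ would force $b_{s+1}(M)$ to be even, contradicting the odd value just computed.

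The principal obstacle I anticipate is not in the Lefschetz or Gysin calculations, which flow directly from the machinery of Sections \ref{transverse-sym-Hodge}--\ref{ddelta-lemma} and Theorem \ref{main-result1}, but in controlling the parity of $\dim PH^{s+1}(W_s)$; this requires a careful inspection of the \cite{FMU07} construction to verify that the lower odd primitive Betti numbers of $W_s$ vanish. Arranging the indivisibility of $[\omega]$ is the only other subtlety, and is handled by the routine rescaling/perturbation step indicated above.
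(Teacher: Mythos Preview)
Your construction and overall strategy coincide with the paper's: build $M$ as the Boothby--Wang fibration over $(W_s,\omega)$, use Lemma~\ref{boothby-wang-l1} for simple connectivity, Theorem~\ref{regular-Lef-contact} for the Lefschetz status, and the Gysin sequence to compute $b_{s+1}(M)$. Your exact-sequence analysis (injectivity of $\cup[\omega]$ on $H^{s-1}$ and $H^s$) is equivalent to the paper's, and both yield $b_{s+1}(M)=b_{s+1}(W_s)-b_{s-1}(W_s)=3-b_{s-1}(W_s)$, which you phrase as $\dim PH^{s+1}(W_s)$.

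The one substantive divergence is the parity step, and here the paper's route dissolves what you flag as your principal obstacle. You propose to inspect the explicit construction of $W_s$ in \cite{FMU07} to check that the lower odd-degree primitive Betti numbers $p_1,p_3,\dots,p_{s-1}$ vanish. The paper bypasses this entirely by invoking the general fact (\cite[Prop.~2.6]{FMU07}) that on any $s$-Lefschetz symplectic manifold the Betti numbers $b_j$ are even for every odd $j\leq s$. Since $s$ is even, $s-1$ is odd with $s-1\leq s$, so $b_{s-1}(W_s)$ is even and $b_{s+1}(M)=3-b_{s-1}(W_s)$ is automatically odd. Thus no case-by-case examination of $W_s$ is needed; the oddness follows from the same weak Lefschetz hypothesis already in play. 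Your argument is not wrong, but it trades a one-line appeal to a general structural result for an unverified claim about a specific construction.
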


\begin{proof} By Theorem \ref{example-weak-lef}, there is a closed simply-connected symplectic manifold $(W_s,\omega)$ of dimension $2(s+2)$ that is $s$-Lefschetz but not $(s+1)$-Lefschetz. Moreover, the symplectic form $\omega$ is integral, and $b_{s+1}(W_s)=3$. Without loss of generality, we may also assume that $[\omega]$ is indivisible. Let $(M,\eta)$ be the Boothby-Wang firbation over
$(W_s,\omega)$ whose Chern class is $[\omega]$. Then by Lemma \ref{boothby-wang-l1}, $M$ is simply-connected.  Consider the following portion of the Gysin sequence for the principal circle bundle $\pi:M\rightarrow W_s$.
\begin{equation}\label{Gysin-2}  \cdots H^{s-1}(W_s,\R)  \xrightarrow{\wedge[\omega]} H^{s+1}(W_s,\R) \xrightarrow{\pi^*} H^{s+1}(M, \R)\xrightarrow{\pi_*} H^{s}(W_s,\R)\xrightarrow{\wedge[\omega]} H^{s+2}(W_s,\R)\xrightarrow{\pi^*} \cdots,\end{equation} where
$\pi_*:H^*(M,\R)\rightarrow H^{*-1}(X,\R)$ is the map induced by integration along the fibre.

Since $W_s$ is $s$-Lefschetz with $s$ being an even integer,  by \cite[Prop. 2.6]{FMU07}, $b_{s-1}(W_s)$ must be even as $s-1$ is odd. Moreover, the map  $H^{j}(W_s,\R)\xrightarrow{\wedge[\omega]} H^{j+2}(W_s,\R)$ must be injective for $j=s-1$ and $j=s$. As a result, $b_{s+1}(M)=b_{s+1}(W_s)-b_{s-1}(W_s)=3-b_{s-1}(W_s)$ must be odd as well. It follows from \cite[Theorem 7.4.11]{BG08} that $M$ can not support any Sasakian structure.
\end{proof}



\medskip

\noindent
Yi Lin \\
Department of Mathematical Sciences \\
Georgia Southern University\\
203 Georgia Ave., Statesboro, GA, 30460 \\
{\em E\--mail}: yilin@georgiasouthern.edu

\noindent
\noindent


\begin{thebibliography}{000}






\bibitem[Bl76]{B76} Blair, D. E., {\em Contact Manifolds in Riemannian Geometry}, Series: Lecture Notes in Mathematics, Vol. 509
1976, VI, 146 pages.

\bibitem[BG08]{BG08} Charles P. Boyer and Krzysztof Galicki, {\em Sasakian geometry, Oxford Mathematical
                     Monographs}, Oxford University Press, Oxford, 2008. MR MR2382957
\bibitem[Bry88]{brylinski;differential-poisson}J.L. Brylinski, {\em A
differential complex for {P}oisson manifolds}, J.
 Differential Geom. \textbf{28} (1988), no.~1, 93--114.




\bibitem[CNY13]{CNY13} Beniamino Cappelletti-Montano, Antonio De Nicola, and Ivan Yudin, {\em Hard Lefschetz Theorem for Sasakian manifolds}, J. Differential Geom.
Volume 101, Number 1 (2015), 47-66.

\bibitem[EL49]{EL49} C. Ehresmann and P. Libermann, {\em Sur le probl$\grave{e}$me d\'{e}quivalence des formes diff\'{e}rentiellesext\'{e}rieures quadratiques}, C. R. Acad. Sci. Paris 229, (1949), 697¨C698.




\bibitem[FMU04]{FMU04} V. Munoz, M. Fernandez and L. Ugarte,{\em The $d\delta$-lemma for weakly Lefschetz symplectic manifolds}, Differential Geometry and its Applications, Proceedings Conference Prague, August 30- September 3, 2004. Editors: J. Bures, O. Kowalski, D. Krupka, J. Slovak. Charles University, Prague (Czech Republic), 2005, 229-246.

\bibitem[FMU07]{FMU07} V. Munoz, M. Fernandez and L. Ugarte, {\em Weakly Lefschetz symplectic manifolds}, Transactions of the American Mathematical Society, Vol. 359, 2007, 1851-1873.



\bibitem[Go95]{Go95} R. Gompf, {\em A new construction of symplectic manifolds}, Ann. Math. 142 (1995), 527-598



\bibitem[Gui01]{Gui01}
V.~Guillemin, {\em Symplectic Hodge theory and the $d\delta$-lemma},
  preprint, Massachusetts Institute of Technology, 2001.


\bibitem[HT13]{HT13} B. L. Hajduk and A. Tralle, {\em On simply connected $K$-contact non sasakian manifolds}, Preprint, arxiv: 1305.2591

\bibitem[Ha13]{Ha13} Mark J. D. Hamilton, {\em Inequivalent contact structures on Boothby-Wang five-manifolds},
Mathematische Zeitschrift, August 2013, Volume 274, Issue 3-4, pp 719-743

\bibitem[MT15]{MT15} V. Munoz, A. Tralle, {\em Simply-connected K-contact and Sasakian manifolds of dimension 7} Mathematische Zeitschrift 281(2015), 457-470
\bibitem[Pak08]{Pak08} Hong Kyung Pak,  {\em Transversal Harmonic theory for transversally symplectic flows}, J. Aust. Math. Soc. 84 (2008), 233-245




 \bibitem[He10]{He10} Zhenqi He, {\em Odd dimenisonal symplectic manifolds}, MIT Ph.D thesis, 2010.


\bibitem[Ka90]{ka90} Aziz El Kacimi-Alaoui, {\em Operateurs transversalement elliptiques sur un feuilletage riemannien
et applications}, Compositio Math. 73 (1990), no. 1, 57-106.


\bibitem[L55]{L55} P. Libermann, {\em Sur les structures presque complexes et autres structures infinit$\grave{e}$simales
$\grave{r}$eguli\'{e}res}, Bull. Soc. Math. France 83 (1955), 195¨C224.


 \bibitem[L04]{L04} Y. Lin, {\em Examples of Non-K\"ahler Hamiltonian circle manifolds with the strong Lefschetz property},
 Advances in Mathematics, 208 (2007), no. 2, 699--709.

\bibitem[L13]{L13} Y. Lin, {\em Lefschetz contact manifolds and odd dimensional symplectic geometry}, Preprint 2013, arxiv:1311.1431.
\bibitem[Ma95]{Ma95}O. Mathieu, {\em Harmonic cohomology classes of
symplectic manifolds} Comment. Math. Helv. 70 (1995), no.1, 1--9.

\bibitem[Mer98]{Mer98} S. Merkulov, {\em Formality of canonical
symplectic complexes and Frobenius manifolds}, Internat. Math. Res.
Notices, 14 (1998), 772-733
\bibitem[P01]{P01} E. Prato, \emph{Simple non-rational convex polytopes via symplectic geometry},
               Topology, \textbf{40} (2001), 961-975.
\bibitem[W80]{W80} R. O. Wells,  {\em Differential Analysis on Complex Manifolds}, Springer-Verlag, 1980.










\bibitem[S57]{S57} I., Satake, {\em The Gauss-Bonnet theorem for V-manifolds}, J. Math. Soc., Japan 9 (1957), 466 - 492.

\bibitem[T97]{T97} P. Tondeur, \emph{Geometry of foliations}, Monographs in Mathematics Vol. 90, Springer Basel AG, (1997).


\bibitem[Yan96]{Yan96} D. Yan, {\em Hodge structure on symplectic manifolds}, Adv. Math. 120 (1996), no. 1, 143-154. MR
97e:58004
























\end{thebibliography}
\end{document}